\newtheoremstyle{mine}
{\baselineskip}
{\baselineskip}
{\itshape}
{
}
{\bfseries}
{.}
{.5em}
{#1 #2\ifx#3\relax\else~(#3)\fi}
\theoremstyle{mine}
\newtheorem{theorem}{Theorem}
\numberwithin{theorem}{section}
\newtheorem{corollary}[theorem]{Corollary}
\newtheorem{proposition}[theorem]{Proposition}
\newtheorem{lemma}[theorem]{Lemma}
\newtheorem{definition}[theorem]{Definition}
\newtheorem{question}{Question}
\numberwithin{equation}{section}
\theoremstyle{remark}
\newtheorem{remark}{Remark}
\colorlet{shadecolor}{blue!10}
\def\rm{\reversemarginpar}
\let\qed=\QED
\renewcommand{\epsilon}{\varepsilon}
\newcommand{\R}{\mathbb{R}}
\newcommand{\Cb}{\mathbf{C}}
\newcommand{\N}{\mathbb{N}}
\def\tE{\tilde{\mathcal{E}}_\infty}
\def\calB{\mathcal{B}}
\def\calD{\mathcal{D}}
\def\calE{\mathcal{E}}
\def\calL{\mathcal{L}}
\def\calM{\mathcal{M}}
\def\M{\mathcal{N}}
\def\calP{\mathcal{P}}
\def\scrP{\mathscr{P}}
\def\scrD{\mathscr{D}}
\def\whth{\widehat{\theta}}
\def\Lmu{\mbox{\large{\textbf{$\upmu$}}}}
\def\P{\mathbb{P}} 
\def\E{\mathbb{E}} 
\def\<#1{\langle #1\rangle}
\newcommand{\red}[1]{{\color{red}#1}}
\newcommand{\purple}[1]{{\color{purple}#1}}
\newcommand{\xinxin}[1]
{\textcolor{blue}{*}\marginpar[\textcolor{blue} {  \raggedleft  \footnotesize \textbf{Xinxin:}  #1 }  ]{ \textcolor{blue} { \raggedright  \footnotesize  \textbf{Xinxin:} #1 }  }}
\def\bi{\begin{itemize}}  
\def\ei{\end{itemize}}
\def\bnum{\begin{enumerate}} 
\def\enum{\end{enumerate}}
\def\ni{\noindent}
\def\bf{\bfseries}
\def\PPP{\mathrm{PPP}}
\def\M{\mathcal{M}}
\newcommand{\ind}[1]{\mathbf{1}_{\left\{#1\right\}}}
\newcommand{\mS}{\underline{S}}
\title
[Domain of attraction of BBM]
{
Domain of attraction of the fixed points of Branching Brownian motion
}
\author{Xinxin Chen, Christophe Garban,  Atul Shekhar}
\address[Xinxin Chen]
{Beijing Normal University, School of Mathematical Sciences, China   
}
\email{xinxin.chen@bnu.edu.cn}
\address[Christophe Garban]
{Universit\'e Claude Bernard Lyon 1, CNRS UMR 5208, Institut Camille Jordan, 69622 Villeurbanne, France \,, Institut Universitaire de France (IUF) and Universit\'e de Gen\`eve (Unige)}
\email{garban@math.univ-lyon1.fr}
\address[Atul Shekhar]
{Tata Institute of Fundamental Research-CAM, Bangalore, India}
\email{atul@tifrbng.res.in}
\begin{document}

\maketitle

\begin{center}
{\em }
\end{center}

\begin{abstract}
We give a complete characterisation of the domain of attraction of fixed points of branching Brownian motion (BBM) with critical drift.
Prior to this classification, we introduce a suitable metric space of locally finite point measures on which we prove 1) that the BBM with critical drift is a well-defined Markov process and 2) that it satisfies the Feller property. Several applications of this characterisation are given.

\end{abstract}

\section{Introduction}\label{intro}

\subsection{Context.}

In this article we study the critical-drifted branching Brownian motion (BBM) seen as a Markov process, and answer some natural questions about it. A (binary)\footnote{For simplicity, we shall only consider the case of binary branching in this paper, but the main results hold also under the more general setting of \cite{ABK13}.} branching Brownian motion (BBM) can be described as follows: starting from a countable set of initial particles, each particle evolves independently of each other according to standard Brownian motions in $\R$ and splits into two independent particles at rate one. If one starts such a BBM process with a single particle at the origin at time 0, it is well known that at time $t$, there will be  $n(t)\approx e^t$ particles whose positions will be denoted by $\{\chi_k(t)\}_{1\leq k \leq n(t)}$. Furthermore the rightmost particle at time $t$, i.e. $M_t:=\sup_{k\leq n(t)} \chi_k(t)$ will be found modulo $O(1)$-fluctuations at distance $m(t)= \sqrt{2} t - \frac{3}{2\sqrt{2}} \log_+(t)$. See for example \cite{bovier-book,shi-book} and references therein. In this article we are interested in the BBM with critical drift which is given by $\{\chi_k(t)-\sqrt{2}t\}_{k\leq n(t)}$. The BBM with non-critical drifts seen as a Markov process has been considered and studied in \cite{kabluchko}.\\ 

The goal of this article is twofold: 
\begin{enumerate}
\item To give a rigorous construction of the critical-drifted BBM viewed as a Markov process.
\item To characterise the \textit{domain of attraction} of fixed points (a.k.a. invariant measures) of the above Markov process. 
\end{enumerate}

Towards the first goal, the critical-drifted BBM can be informally viewed as a Markov process as follows: starting from a locally finite point measure $\theta_0 = \eta = \sum_{i\in I }\delta_{x_i}$ ($I$ is a finite or countably infinite index set and $x_i \in \R$ are not necessarily distinct), the Markov process transitions during time $[0,t]$ to a new (random) point measure $\theta_t$ defined by 
\begin{equation}\label{markov-def}
\theta_t = \sum_{i \in I} \sum_{k=1}^{n^i(t)} \delta_{x_i + \chi_k^i(t) - \sqrt{2}t},
\end{equation}
where $\{\chi^i\}_{i\in I}$ is a family  of BBMs started from $0$ which are independent of each other and conditionally independent of $\{x_i\}_{i\in I}$.


The above definition however does not formally define a Markov process because of an issue which we refer to as \textit{coming down from $-\infty$}: even if $\theta_0$ is locally finite, $\theta_t$ can be locally infinite for $t>0$. For example, if $\theta_0$ is Poisson point process (PPP) with intensity $e^{|x|^3}1_{x<0}dx$, it can be checked that $\theta_1$ is not locally finite. To overcome this issue, we will restrict the definition of the above Markov process to a suitable subset of the space of all locally finite point measures. We are here inspired by the paper \cite{timo-paper} which deals with a similar situation in the context of \textit{longest increasing subsequences}. \\

\subsection{Constructing a state space for the Markov process $\theta_t$.}
Let us first recall the space $\M$ of integer-valued locally finite measures on $\R$ equipped with the topology of \textit{vague convergence}. The vague topology on $\M$ is given by: 
\[\eta_n \overset{v}{\rightarrow} \eta \hspace{2mm}\textrm{   if  } \hspace{2mm}\forall h\in C_0^{+}(\R), \hspace{2mm}\<{h,\eta_n} \to \<{h,\eta},\]
where $C_0^{+}(\R)$ is the space of all compactly supported non-negative continuous functions on $\R$ and $\<{h,\eta} := \int_{\R} h(x)\eta(dx)$. It is well known that there exists a countable sequence $\{h_k\}_{k\geq 1} \subset C_0^{+}(\R)$ such that $\eta_n \overset{v}{\rightarrow} \eta$ if and only if $\<{h_k,\eta_n} \to \<{h_k,\eta}$ for all $k\geq 1$, and the vague topology on $\M$ is metrizable using the metric 
\[ d(\eta_1,\eta_2) = \sum_{k=1}^\infty 2^{-k}(|\<{h_k,\eta_1} - \<{h_k,\eta_2}|\wedge 1).\]
The space $(\M,d)$ is a Polish space, see [Chapter 2, \cite{bovier-book}] for details. Also recall the weak convergence of finite measures: for finite measures $\eta_n, \eta$, $\eta_n \overset{w}{\to} \eta$ if $\<{f,\eta_n} \to \<{f,\eta}$ for all $f\in C_b^+(\R)$, where $C_b^{+}(\R)$ denotes the space of all non-negative bounded continuous functions on $\R$.\\

For our purpose, we consider a subset $\M_2 \subset \M$ defined by\footnote{The space $\M_2$ defined here is slightly different from the space $\mathscr{M}_{3/2}$ we considered in our previous article \cite{CGS-BBM}. We realised during the preparation of this article that the space $\M_2$ is a more convenient space for the Markov process $\theta_t$. To emphasise this difference, we have used a different script letter $\M$ and compared to $\mathscr{M}$ in \cite{CGS-BBM}. }  

\begin{equation}\label{non-explosive}
\M_{2}:=\biggl\{\eta \in \M \textrm{ s.t. } \eta\neq 0, \eta([0,\infty))< \infty \textrm{ and } \lim_{m\to \infty} \frac{1}{m^2}\log(\eta([-m,-m+1])\vee 1) =0\biggr\}\,.
\end{equation}

For $\eta \neq 0$, let us define \begin{equation}
\max \eta := \sup_{i\in I}x_i,
\end{equation}
where $\eta = \sum_{i\in I}\delta_{x_i}$. The condition $\eta([0,\infty)) <\infty$ is equivalent to $\max \eta < \infty$. Also, the last condition appearing in \eqref{non-explosive} is equivalent to 
\begin{equation}\label{finite-integral}
\forall \lambda>0, \int_{-\infty}^{0} e^{- \lambda x^{2}}\eta(dx) < \infty.
\end{equation}

We now introduce the following metric $d_2$ on $\M_2$.
For $k\geq 1$, let   
\begin{equation}\label{define-alpha} 
\alpha_k(x) = 
\begin{cases}
 e^{-\frac{x^2}{k}} & \text{if } x \leq -1,\\
 -e^{-\frac{1}{k}}x & \text{if } x \in (-1,0),\\
 0 & \text{if } x \geq  0.
\end{cases}
\end{equation}
For $\eta_1,\eta_2 \in \M_2$, define 
\begin{equation}\label{definition-of-metric-d-2}
d_2(\eta_1,\eta_2) := d(\eta_1, \eta_2) + |\max \eta_1 - \max \eta_2| + \sum_{k=1}^\infty 2^{-k}(|\<{\alpha_k,\eta_1} - \<{\alpha_k,\eta_2}|\wedge 1).
\end{equation}

Note that \eqref{finite-integral} implies $\<{\alpha_k,\eta} <\infty$ for any $\eta\in \M_2$. Also, since $\eta \neq 0$ and $\eta([0,\infty)) < \infty$, the $\max \eta$ is well defined. Therefore, $d_2$ is well defined on $\M_2$. It can be easily checked that $d_2$ is a metric on $\M_2$. We will verify in Section \ref{M-2-topology} that $(\M_2,d_2)$ is a Polish space. We denote the convergence in $(\M_2,d_2)$ by writing $\eta_n \overset{d_2}{\rightarrow} \eta$. \\

Let $\calB_2$ be the Borel sigma algebra on $\M_2$. A $\M_2$-valued $\calB_2$-measurable random variable defined on some probability space 
$(\Omega,\mathcal{F},\P)$ will be referred to as a \textbf{$\M_2$-valued point process}. We reserve the symbol $\theta$ to represent such a point process and use the symbol $\eta$ to denote a generic (deterministic) element of $\M_2$. A $\M_2$-valued point process $\theta$ is equivalently described by its law $\mu_{\theta}$ which is a probability measure on $(\M_2,\calB_2)$. We denote by $\scrP(\M_2)$ the space all probability measures on $(\M_2,\calB_2)$ equipped with the topology of weak convergence. We say that a sequence $\theta_n$ of $\M_2$-valued point processes converges in distribution to a $\M_2$-valued point process $\theta$, denoted $\theta_n \overset{\calL_2}{\to}\theta$, if $\mu_{\theta_n}$ converges weakly to $\mu_{\theta}$. If $\theta_n, \theta$ are $\M_2$-valued point processes, we can also view them as $\M$-valued random variables, i.e. as usual point processes. As such, we can also consider the classical vague convergence in distribution: $\theta_n \overset{\calL_v}{\to}\theta$ if $\<{f,\theta_n}\overset{\calL}{\to} \<{f,\theta}$ as real-valued random variables for all $f\in C_0^+(\R)$. Note that $\calL_2$-convergence is stronger than $\calL$-convergence, i.e. $\theta_n \overset{\calL_2}{\to}\theta$ implies $\theta_n \overset{\calL_v}{\to}\theta$, but the converse is not true, see Lemma \ref{tight=real-tight}, \ref{conv=real-conv} below. We will also write $\theta \overset{\calL_2}{=} \tilde{\theta}$ for two $\M_2$-valued point processes $\theta, \tilde{\theta}$ if $\mu_\theta = \mu_{\tilde{\theta}}$.  \\

\vspace{2mm}

Restricting the definition of the process $\theta_t$ on $\M_2$ evades the issue of \textit{coming down from $-\infty$}. We will show that if $\theta_0$ is a $\M_2$-valued point process, then $\theta_t$ is also a $\M_2$-valued point process, see Lemma \ref{M-preserved}. In particular, $\theta_t$ is a well defined locally finite point measure. As we show below, not only the space $\M_2$ is a sufficiently rich space to conveniently define the Markov process $\theta_t$, but we will also prove that $\theta_t$ is a Feller process on $\M_2$. Also, one can easily find $\theta_0 \notin \M_2$ such that $\theta_1$ is not locally finite, e.g. $\theta_0 = \PPP(e^{|x|^{2+ \epsilon}}1_{x<0}dx)$ with any small $\epsilon>0$. This suggests that  the choice of $\M_2$ is close to being optimal (a negative $\epsilon$ in the previous example would be inside $\M_2$) in order to conveniently define the Markov process $\theta_t$.

\subsection{Domain of attraction of fixed points of $\theta_t$.} Consider the natural semigroup $\mathcal{P}_t$ of $\theta_t$:  for a bounded measurable function $F: \M_2 \to \R$, $\mathcal{P}_tF: \M_2 \to \R$ is defined as 
\begin{equation}\label{define-semigroup}
\mathcal{P}_tF(\eta) = \E(F(\theta_t) \bigl | \theta_0 = \eta).
\end{equation}

Given a probability measure $\mu \in \scrP(\M_2)$, this also defines a probability measure $\calP_t \mu$ given by 

\[\calP_t\mu(A) = \int_{\M_2} \calP_t 1_{A}(\eta) \mu(d\eta), \forall A\in\calB_2.\]
A measure $\mu \in \scrP(\M_2)$ is called a \textit{fixed point/invariant measure} of $\theta_t$ if $\calP_t \mu = \mu$ for all $t>0$. In our recent paper \cite{CGS-BBM}, we characterised\footnote{We in fact only need the assumption that $\mu(\theta \hspace{.5mm} \textrm{ s.t. }\hspace{.5mm} \theta([0,\infty))<\infty) = 1$.} all such fixed points. These fixed points are given by $\{\tilde{\Lmu}_\infty(\cdot - a)\}_{a\in \R}$ and their convex combinations, where $\tilde{\Lmu}_\infty$ is the law of the so called limiting extremal process $\tilde{\mathcal{E}}_\infty$, see \eqref{tilde-def} below for its definition. Equivalently, it means that if $\theta$ is a fixed point of the Markov process $\theta_t$, then  $\theta \overset{\calL_2}{=}\tilde{\mathcal{E}}_\infty (\cdot - S)$ for some random variable $S$ independent of $\tilde{\mathcal{E}}_\infty$.\\
The domain of attraction of a given fixed point $\Lmu_{inv} \overset{\calL_2}{=}\tilde{\mathcal{E}}_\infty (\cdot - S)$ is the subset of $\scrP(\M_2)$ defined by  
\[ \scrD_{\Lmu_{inv}} := \{ \mu \in \scrP(\M_2) \hspace{1mm}\bigl | \hspace{1mm} \calP_t\mu \to \Lmu_{inv} \text{ weakly in }\scrP(\M_2)\}.\]

When $\Lmu_{inv} = \tilde{\Lmu}_\infty$, we simply write 
\[ \tilde{\scrD}_\infty := \scrD_{\tilde{\Lmu}_\infty}:= \{ \mu \in \scrP(\M_2) \hspace{1mm}\bigl | \hspace{1mm} \calP_t\mu \to \tilde{\Lmu}_\infty \text{ weakly in }\scrP(\M_2)\}.\]

For simplicity, we only describe in full details the characterisation of  $\tilde{\scrD}_\infty$. The characterisation of the domain of attraction of any other fixed point $\Lmu_{inv}$ can be similarly obtained by following the same arguments, see Remark \ref{any-other-inv} below. \\ 

The description of $\tilde{\scrD}_\infty$ can be equivalently framed in terms of $\M_2$-valued point processes. We are asking: \textbf{for which $\M_2$-valued point processes $\theta_0$, $\theta_t \overset{\calL_2}{\to} \tilde{\mathcal{E}}_\infty$?}  Note that this question is well posed since $\tE$ itself is a $\M_2$-valued point process. In fact, using results of \cite{lisa-paper}, it follows easily that a.s. 
\begin{equation}\label{roughtailEt}
\widetilde{\calE}_\infty ([-m, -m+ 1))\le m^3 e^{\sqrt{2}m} \textrm{ as } m \to +\infty,
\end{equation}
see [Section 4,\cite{CGS-BBM}] for details. This clearly implies that $\tE$ satisfies  \eqref{finite-integral} and it is as such a $\M_2$-valued point process. 
Therefore, it makes sense to talk about the convergence $\theta_t \overset{\calL_2}{\to} \tilde{\mathcal{E}}_\infty$. With a slight abuse of notation, we will also say that $\theta$ is in the \textbf{domain of attraction of $\tilde{\mathcal{E}}_\infty$} if $\theta_t \overset{\calL_2}{\to} \tilde{\mathcal{E}}_\infty$ when $\theta_0=\theta$. 


\subsection{Heuristics about the structure of $\tilde{\scrD}_\infty$.}\label{guess-work}
To get an idea about the structure of $\tilde{\scrD}_\infty$, it is instructive to look at some examples which are known to be in $\tilde{\scrD}_\infty$: 
\begin{enumerate}
\item It was proven \cite{ABK13} that if $\theta_0$ is $\PPP(\sqrt{\frac{2}{\pi}}(-x)e^{-\sqrt{2}x}1_{x<0}dx)$, then $\mu_{\theta_0} \in \tilde{\scrD}_\infty$.

\item Clearly, since $\tilde{\Lmu}_\infty$ is a fixed point, $\tilde{\Lmu}_\infty \in \tilde{\scrD}_\infty$. Also, it was proven in \cite{lisa-paper} that as $x\to -\infty$\footnote{The constant $1/\sqrt{\pi}$ was not determined in \cite{lisa-paper}. Its exact value was determined in \cite{mytnik-small-data}.}, 
\begin{equation}\label{lisa-result-with-constant}
\frac{\tilde{\mathcal{E}}_\infty([x,0])}{(-x)e^{-\sqrt{2}x}}  \overset{\P}{\longrightarrow} \frac{1}{\sqrt{\pi}}.
\end{equation}
\end{enumerate}

The above examples heuristically suggest that if $\theta$ is in the domain of attraction of $\tE$, then the average density of particles in $\theta$ should be comparable to $(-x)e^{-\sqrt{2}x}$ (up to some constant) as $x\to -\infty$\footnote{Note that the derivative of $(-x)e^{-\sqrt{2}x}$ grows asymptotically as $\sqrt{2}(-x)e^{-\sqrt{2}x}$ as $x\to -\infty$ which is of the same order.}. 
But, it is a priori not clear in which precise sense the density of particles in $\theta$ compares to $(-x)e^{-\sqrt{2}x}$. One possible attempt while characterising $\tilde{\scrD}_\infty$ could be to try to formulate this hunch. For example, one could make a guess such as: $\theta$ is in the domain of attraction of $\tE$ if $\theta([x,x+1])/(-x)e^{-\sqrt{2}x}$ converges in some sense to a constant as $x\to -\infty$. However, one should also not follow this hunch too literally because when $\theta$ is PPP$\bigl(\sqrt{\frac{2}{\pi}}(-x)e^{-\sqrt{2}x}1_{x<0}dx\bigr)$, $\{\theta([n,n+1])\}_{n= -1,-2,-3,..}$ are independent random variables. Therefore, one cannot expect $\theta([x,x+1])/(-x)e^{-\sqrt{2}x}$ to converge in this case. A more appropriate guess would be $\theta([x,x+1])/(-x)e^{-\sqrt{2}x}$ converges to a constant in the Ces\`aro sense, so that the law of large numbers can be invoked. The main result of this article gives an exact condition which makes this guess precise. \\

It is also instructive to keep in mind the classical study of the domain of attraction of stable/normal distributions under i.i.d. sums. The solution to this problem relies heavily on the \textit{theory of regularly varying functions} and \textit{Tauberian theorems}, see \cite{modelling-extreme},\cite{feller-book} for details. Of course, studying the domain of attraction of stable/normal distributions is very different from studying $\tilde{\scrD}_\infty$. However, as we  shall show below and especially in Section \ref{s.Tauber}, \textit{Tauberian theorems} and the \textit{theory of regularly varying functions} will also be useful in our description of $\tilde{\scrD}_\infty$.

\subsection{Main results.}

We first prove that the Markov process $\theta_t$ is a Feller process. Among the various different definitions of the Feller property available in the literature, we choose to work with the following definition: A semigroup $\calP_t$ is Feller continuous if $\calP_tF$ is a bounded continuous function for all bounded continuous functions $F$. Our first main result is: 

\begin{theorem}\label{main-thm-feller}
The process $\theta_t$ is a Feller process on $\M_2$, i.e. the semigroup $\calP_t$ is Feller continuous. 
\end{theorem}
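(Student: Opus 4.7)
Boundedness of $\calP_t F$ is trivial since $|\calP_t F(\eta)|\le \|F\|_\infty$. For continuity, fix $F\in C_b(\M_2)$ and a convergent sequence $\eta_n \to \eta$ in $d_2$. It suffices to construct a coupling of the $\M_2$-valued point processes $\theta_t^{\eta_n}$ and $\theta_t^\eta$ on a common probability space with $d_2(\theta_t^{\eta_n},\theta_t^\eta)\to 0$ in probability, since $F$ continuous and bounded then yields $\calP_t F(\eta_n) \to \calP_t F(\eta)$ by bounded convergence. I would build this coupling by truncating at a scale $-M$.

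\textbf{Head: finitely many atoms above $-M$.} Fix $M>0$ avoiding the (at most countable) set of atoms of $\eta$ and split $\eta=\eta^{\ge}+\eta^{<}$, $\eta_n=\eta_n^{\ge}+\eta_n^{<}$ into restrictions to $[-M,\infty)$ and $(-\infty,-M)$, using independent families of BBMs for the two parts. Since $\eta$ is locally finite and $\eta([0,\infty))<\infty$, $\eta^{\ge}$ has finitely many atoms, and the $d_2$-convergence (vague convergence together with convergence of $\max$) forces $\eta_n^{\ge}$ to eventually have the same number of atoms, at positions converging to those of $\eta^{\ge}$. Pairing matched atoms to a common BBM, the three contributions to $d_2$ in \eqref{definition-of-metric-d-2} are each continuous functions of finitely many atom positions and finitely many common BBM trajectories, so $\theta_t^{\eta_n^{\ge}} \to \theta_t^{\eta^{\ge}}$ almost surely in $d_2$.

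\textbf{Tail: uniform negligibility below $-M$.} The technical heart is to show that for every $\eps>0$,
\[
\lim_{M\to\infty}\ \limsup_{n}\ \P\bigl( d_2(\theta_t^{\eta_n},\theta_t^{\eta_n^{\ge}})>\eps\bigr) \;=\;0,
\]
and similarly with $\eta$ in place of $\eta_n$. Because the process is additive in its initial data (independence of BBMs attached to distinct atoms), this reduces, via Markov's inequality and inspection of the three terms defining $d_2$, to the following uniform-in-$n$ statements: (a) $\E\<{h,\theta_t^{\eta_n^{<}}}\to 0$ for each compactly supported test function $h$ used to metrize the vague topology; (b) $\P(\max \theta_t^{\eta_n^{<}} > -L)\to 0$ for every $L$ (with $L$ chosen so that $\max \theta_t^{\eta^{\ge}}>-L$ with high probability); and (c) $\E\<{\alpha_k,\theta_t^{\eta_n^{<}}}\to 0$ for every $k$. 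Each reduces, via the branching representation, to a sum over the atoms of $\eta_n^{<}$ of a single-atom expectation
\[
\E\bigl[\<{g,\theta_t^{\delta_x}}\bigr] \;=\; e^{t}\,\E\bigl[g(x+B_t-\sqrt{2}\,t)\bigr],
\]
where $B_t$ is a standard Brownian motion. A direct Gaussian convolution gives, for $g=\alpha_k$ and $x$ sufficiently negative, $e^t\,\E[\alpha_k(x+B_t-\sqrt{2}\,t)] \le C_{k,t}\,\alpha_{k'}(x)$ for any $k' > k+2t$; for $g\in C_0^+(\R)$ the bound decays super-exponentially in $|x|$; and (b) follows from standard BBM maximum estimates. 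Summing over the atoms in $\eta_n^{<}$ and invoking $\sup_n \<{\alpha_{k'},\eta_n}<\infty$ together with $\<{\alpha_{k'},\eta_n^{<}}\to 0$ as $M\to\infty$ uniformly in $n$ (a direct consequence of the $d_2$-convergence $\<{\alpha_{k'},\eta_n}\to\<{\alpha_{k'},\eta}$) delivers the required bounds.

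\textbf{Main obstacle.} The crux is the $\alpha_k$-tail estimate (c): BBM with critical drift spreads an atom at position $x$ into a cloud whose expected $\alpha_k$-mass at time $t$ is of order $\alpha_{k+2t}(x)$, i.e.\ the Gaussian parameter in the weight \emph{inflates by $2t$}. The sum over all $k$ in the definition of $d_2$ is exactly what absorbs this loss — one can always pick $k'>k+2t$ — and the condition \eqref{non-explosive} defining $\M_2$ is precisely tight enough to ensure $\<{\alpha_{k'},\eta}<\infty$ for every $k'$. This structural matching between the state space and the time-$t$ Gaussian spreading is what makes $\M_2$ the natural space on which the Feller property holds.
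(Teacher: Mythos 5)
Your proof is correct, but it takes a genuinely different route from the paper. The paper establishes $\theta_t^{\eta_n}\overset{\calL_2}{\to}\theta_t^{\eta}$ by working directly with Laplace functionals: it invokes Lemma~\ref{conv=real-conv} to reduce to $\E[e^{-\<{f,\theta_t^{n}}}]\to\E[e^{-\<{f,\theta_t}}]$ for $f\in C_{d_2}^+(\R)$, writes the left-hand side via Lemma~\ref{BBM=FKPP} as $\exp\{\int\log(1-u_\varphi(t,\sqrt{2}t-x))\,\eta_n(dx)\}$ where $\varphi=1-e^{-f(-\cdot)}$, and then checks --- using Many-to-One and $1-e^{-x}\le x$ --- that the integrand $-\log(1-u_\varphi(t,\sqrt{2}t-\cdot))$ itself belongs to $C_{d_2}^+(\R)$, so that $d_2$-convergence of $\eta_n$ passes directly to the integrals via Lemma~\ref{alt-des}. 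You instead build an explicit coupling: pair the finitely many head atoms of $\eta_n$ and $\eta$ on $[-M,\infty)$ to common BBMs (which gives a.s.\ $d_2$-convergence of the head contribution), and kill the tail uniformly in $n$ via Many-to-One estimates on each term of the $d_2$ metric. The two approaches both hinge on the same analytic fact --- the Gaussian convolution $x\mapsto e^{t}\E[g(x+B_t-\sqrt{2}t)]$ inflates the Gaussian decay parameter by $2t$, which the space $\M_2$ (resp.\ the class $C_{d_2}^+(\R)$) is designed to absorb --- and your ``Main obstacle'' remark correctly identifies this as the structural reason the Feller property holds on $\M_2$. What the paper's Laplace route buys is economy (no coupling construction, no uniformity-in-$n$ bookkeeping) and direct reuse of the FKPP machinery that is the engine of the rest of the paper; what your coupling route buys is a statement that is prima facie stronger (convergence in probability under a coupling, rather than only weak convergence) and that does not depend on the McKean/FKPP representation, so it would transfer to other branching models where no such PDE link is available. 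One small piece worth writing out if you turn this into a full proof: the uniform tightness $\sup_n \<{\alpha_{k'},\eta_n^{<}}\to 0$ as $M\to\infty$ follows by borrowing one index, i.e.\ $\<{\alpha_{k'}\,\ind{(-\infty,-M)},\eta_n}\le e^{-M^2/(k'(k'+1))}\<{\alpha_{k'+1},\eta_n}$, together with $\sup_n\<{\alpha_{k'+1},\eta_n}<\infty$ from the $d_2$-convergence; and the per-atom bound $e^{t}\E[\alpha_k(x+B_t-\sqrt{2}t)]\lesssim_{k,t}e^{-(x-\sqrt{2}t)^2/(2t+k)}$ (Gaussian convolution) indeed gives $\lesssim \alpha_{k'}(x)$ for any $k'\ge k+2t$ and $x\le -1$, so your stated inflation is exactly right.
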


The second main result of this article is the complete characterisation of the domain of attraction $\tilde{\scrD}_\infty$.  Our characterisation is simpler to state for point processes $\theta_0 =\theta$ which are deterministic, i.e. $\theta = \eta$ a.s. for some $\eta \in \M_2$. For simplicity, let us write $\widehat{\theta}$ for the measure defined by $\widehat\theta(A) = \theta(-A)$.

\begin{theorem}\label{cor-upgrade-thm-1}
If $\theta_0 = \theta$ is a deterministic point measure in $\M_2$, then $\theta_t \overset{\calL_2}{\to}\tilde{\mathcal{E}}_\infty$ if and only if
\begin{equation}\label{result-2} 
\frac{1}{y}\int_{1}^y \frac{\widehat{\theta}(dx)}{xe^{\sqrt{2}x}} \to \sqrt{\frac{2}{\pi}}  \hspace{2mm}as \hspace{2mm} y \to +\infty.
\end{equation} 

\end{theorem}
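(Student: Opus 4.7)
The plan is to reduce the characterisation to a Tauberian equivalence via Laplace functionals. By the Feller property (Theorem \ref{main-thm-feller}) and the topological description of $(\M_2, d_2)$, convergence $\theta_t \overset{\calL_2}{\to}\tilde{\mathcal{E}}_\infty$ can be tested against Laplace functionals $\E[e^{-\langle f, \theta_t\rangle}]$ for $f \in C_c^+(\R)$, together with the auxiliary statistics $\max \theta_t$ and $\langle \alpha_k, \theta_t \rangle$ that are built into $d_2$. Since $\theta_0 = \theta$ is deterministic, the independence of the family $\{\chi^i\}$ factorises
\[
\E[e^{-\langle f, \theta_t\rangle}] \;=\; \prod_{i \in I}\bigl(1 - v_t(f, x_i)\bigr), \qquad v_t(f, x) := 1 - \E_0\Bigl[\exp\Bigl(-\textstyle\sum_k f(x + \chi_k(t) - \sqrt 2 t)\Bigr)\Bigr].
\]

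The analytic heart of the argument, which I would first establish as a preliminary lemma, is a sharp Bramson-type asymptotic
\[
v_t(f, x) \;\sim\; \frac{\psi(f)}{t^{3/2}}\,(T - x)_+ \, e^{\sqrt 2 x} \, \exp\!\Bigl(-\frac{(T-x)_+^{\,2}}{2t}\Bigr),
\]
valid uniformly in $x$ in the left-front regime $x \leq T$, where $T := \tfrac{3}{2\sqrt 2}\log t$ and $\psi(f) := -\log \E[e^{-\langle f,\tilde{\mathcal{E}}_\infty\rangle}]$. The leading constant being $\psi(f)$ (rather than a separately-computed Bramson constant) is the crucial calibration: it reflects the fact that $\tilde{\mathcal{E}}_\infty$ is the decorated PPP whose intensity is precisely the tail coefficient of the critical KPP front. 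I expect this uniform Bramson-type estimate---in particular the Gaussian correction that becomes visible for $|x|$ of order $\sqrt t$---to be the main technical obstacle.

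Given the asymptotic, since each $v_t(f, x_i)$ is uniformly small as $t \to \infty$, the linearisation $-\log(1 - v_t) \sim v_t$ reduces the convergence of Laplace functionals to
\[
\sum_i v_t(f, x_i) \;\sim\; \frac{\psi(f)}{t^{3/2}} \int_0^\infty (T+y)\,e^{-\sqrt 2 y - (T+y)^2/(2t)}\,\widehat\theta(dy),
\]
obtained by the change of variable $y = -x$. Introducing the auxiliary measure $\mu(dy) := \widehat\theta(dy)/(y e^{\sqrt 2 y})$ cancels the exponential weight and leaves $\int y(T+y) e^{-(T+y)^2/(2t)}\,\mu(dy)$. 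Under the Gaussian rescaling $u := (T+y)/\sqrt t$, the integral $\int_0^\infty u^2 e^{-u^2/2}\,du = \sqrt{\pi/2}$ appears, and a direct calculation shows that if $\mu([1, y])/y \to c$ then the displayed quantity tends to $\psi(f)\cdot c \sqrt{\pi/2}$. The Laplace functional therefore converges to $e^{-\psi(f)} = \E[e^{-\langle f,\tilde{\mathcal{E}}_\infty\rangle}]$ precisely when $c\sqrt{\pi/2} = 1$, i.e.\ $c = \sqrt{2/\pi}$, which is \eqref{result-2}. For the converse (necessity) I invoke a Tauberian theorem (to be developed in Section \ref{s.Tauber}): monotonicity of $y \mapsto \mu([1, y])$ combined with the Gaussian-weighted asymptotic determines the Cesàro limit $\sqrt{2/\pi}$ uniquely, as a Gaussian-kernel variant of Karamata's theorem. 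Finally, to upgrade vague-Laplace convergence to full $d_2$-convergence, I verify the additional convergences of $\max\theta_t$ and $\langle \alpha_k, \theta_t\rangle$ by applying the same factorisation/Bramson strategy to well-chosen test functionals (e.g.\ $f = \lambda \mathbf 1_{[a,\infty)}$ for the rightmost particle, and $f = \alpha_k$ for the $d_2$-functionals), together with a tightness argument in $(\M_2, d_2)$.
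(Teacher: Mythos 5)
Your architecture is the right one and is broadly compatible with the paper's Section~\ref{reduce}: reduce to Laplace functionals, linearise $-\log(1-v_t) \sim v_t$ using the uniform smallness of $u_\varphi(t,\sqrt 2 t + \cdot)$ on $\R_+$, and observe that the leading constant self-calibrates to $\psi(f) = -\log\E[e^{-\langle f, \tilde{\mathcal{E}}_\infty\rangle}] = \Cb(f)$ via~\eqref{tilde-lap}. But the proof hinges entirely on the ``preliminary lemma'' you assert, namely the sharp two-sided asymptotic
\[
v_t(f,x) \sim \frac{\psi(f)}{t^{3/2}}\,(T-x)_+\, e^{\sqrt 2 x}\exp\Bigl(-\frac{(T-x)_+^2}{2t}\Bigr), \qquad T := \tfrac{3}{2\sqrt 2}\log t,
\]
\emph{uniformly} in $x$ down to $x$ of order $-\sqrt t$, with exactly the Gaussian exponent $1/2$ and exactly the constant $\Cb(f)$. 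This is a genuine gap. Such a statement is not available: the paper's Lemma~\ref{sharp-bound} gives the exponent $e^{-x^2/(2t)}$ only as an \emph{upper} bound with a non-sharp constant $c_1$, while Lemma~\ref{lower-bound} has exponent $e^{-x^2/t}$ and explicitly warns that ``the exponent $e^{-x^2/t}$ below is not sharp''---so the two sides do not squeeze to an asymptotic, and a fortiori do not identify the constant as $\Cb(f)$. The Gaussian exponent $1/2$ matters concretely: your final Gaussian-moment computation $\int_0^\infty u^2 e^{-u^2/2}\,du = \sqrt{\pi/2}$ would change if the exponent were not sharp, so the identification of the Ces\`aro constant $\sqrt{2/\pi}$ in~\eqref{result-2} would be wrong. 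To prove your lemma one would have to run Bramson's $\psi$-function sandwich (Proposition~\ref{bramson}) with $r = r(t)\to\infty$ chosen so that $\gamma_r\to1$, and simultaneously control all the error terms inside $\psi(r,t,\cdot)$ uniformly for $x\in[0,A\sqrt t]$; this is essentially the content of Proposition~\ref{key-prop}, reorganised, and is the reason the paper never states such a pointwise asymptotic.

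The paper's proof is arranged precisely to avoid needing that lemma. It keeps $r$ fixed, packages all of the $\widehat\theta$-dependence into the single random variable $R_s$ of~\eqref{definition-of-R}, proves $R(r,s,y)\approx yR_s$ (Proposition~\ref{key-prop}) with an error that vanishes for fixed $r$ as $s\to\infty$, and only then sends $r\to\infty$ through the deterministic limit $\Cb(f) = \lim_r \sqrt{2/\pi}\int_0^\infty u_\varphi(r,y+\sqrt 2 r)y e^{\sqrt 2 y}\,dy$ of~\eqref{constant-f}. The master statement is Theorem~\ref{main-thm} ($R_s\to\sqrt{2/\pi}$); Theorem~\ref{cor-upgrade-thm-1} is then a corollary via the HLK Tauberian theorem (Proposition~\ref{HLK-p} with $\rho=3/2$ after the $\sqrt x$ substitution) together with the regular-variation identity~\eqref{power-exchange-1}$\Leftrightarrow$\eqref{power-exchange-2} applied with $\rho=3,\alpha=-2$. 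Your ``Gaussian-kernel variant of Karamata'' and your final tightness and $\max\theta_t$ checks are the right instincts, and they correspond to Lemma~\ref{m-convergence-automatic}; but the converse (necessity) direction as written does not close without spelling out that Tauberian machinery. In short: correct skeleton, correct calibration constant, but the central asymptotic lemma is asserted rather than proved, and the paper's $(r,t)$ double-limit structure exists precisely to substitute for it.
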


\begin{remark}\label{iff-is-not-for-M}
Note that we have only stated the \textit{if and only if} statement above for $\calL_2$ convergence.
In fact, \eqref{result-2} implies $\theta_t \overset{\calL_2}{\to}\tilde{\mathcal{E}}_\infty$ which in turn implies $\theta_t \overset{\calL_v}{\to}\tilde{\mathcal{E}}_\infty$ as pointed out above. However, our proof does not imply \eqref{result-2} from the weaker assumption $\theta_t \overset{\calL_v}{\to}\tilde{\mathcal{E}}_\infty$.
We believe this implication should hold. Given that $\theta_0$ belongs to $\M_2$, 
even though we know that $\theta_t \in \M_2$ a.s for all $t>0$ (Lemma \ref{M-preserved}), what we miss for this implication to hold is that  we do not know how to extract the tightness of $\max \theta_t$ from the weaker convergence $\theta_t \overset{\calL_v}\to \tilde{\mathcal{E}}_\infty$.
\end{remark}

\begin{remark}
The above result is compatible with the guess made in section \ref{guess-work}. It therefore makes the heuristics given in section \ref{guess-work} precise. Also, using integration by parts formula, note that
\begin{equation}\label{IBP}
\int_{1}^y \frac{\widehat{\theta}(dx)}{xe^{\sqrt{2}x}}  = \frac{\whth([0,y])}{ye^{\sqrt{2}y}} -  \frac{\whth([0,1])}{e^{\sqrt{2}}} + \int_1^{y} \frac{\whth([0,x])}{xe^{\sqrt{2}x}} \bigl(\sqrt{2} + \frac{1}{x}\bigr)dx.
\end{equation}
Therefore, the condition \eqref{result-2} can also be obtained from 

\begin{equation}\label{result-2-1}
\widehat{\theta}([0,x]) \sim \frac{1}{\sqrt{\pi}} xe^{\sqrt{2}x}.
\end{equation}

However, \eqref{result-2} is a convergence in the Ces\`aro average sense and it is strictly weaker than \eqref{result-2-1}. One can easily find examples of $\theta$ which satisfy \eqref{result-2} but not \eqref{result-2-1}.
\end{remark}

\vspace{10mm}

For genuinely random initial point processes $\theta_0$, our characterisation reads slightly differently compared to  \eqref{result-2} as follows: 

\begin{theorem}\label{cor-upgrade-thm-2}
If $\theta_0 =\theta$ is a $\M_2$-valued point process, then $\theta_t \overset{\calL_2}{\to} \tilde{\mathcal{E}}_\infty$ if and only if 
\begin{equation}\label{cubic-rate} 
\frac{1}{y^3} \int_0^y xe^{-\sqrt{2}x}\widehat{\theta}(dx) \overset{\P}{\longrightarrow}  \frac{1}{3}\sqrt{\frac{2}{\pi}} \hspace{4mm} as \hspace{2mm} y \to \infty
\end{equation}
and
\begin{equation}\label{tight-object}
\lambda^{\frac{3}{2}}\int_{0}^\infty xe^{-\sqrt{2}x} e^{-\lambda x^2}\widehat{\theta}(dx)
\end{equation}
is tight for $\lambda \in (0,1)$.
\end{theorem}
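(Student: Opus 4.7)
The plan is to derive Theorem~\ref{cor-upgrade-thm-2} from the deterministic version (Theorem~\ref{cor-upgrade-thm-1}) by combining a pathwise Tauberian argument with conditioning on $\theta_0$. The key observation is that for deterministic $\theta$, conditions \eqref{result-2}, \eqref{cubic-rate} and \eqref{tight-object} are essentially equivalent Ces\`aro/Abel-type rephrasings of $\widehat{\theta}([0,y])\sim \tfrac{1}{\sqrt{\pi}} y e^{\sqrt{2} y}$, related to each other by Stieltjes integration by parts and Karamata-type Tauberian theorems (as developed in Section~\ref{s.Tauber}). Hence for random $\theta_0$ the natural route is to promote the pair (\eqref{cubic-rate}, \eqref{tight-object}) to the deterministic condition \eqref{result-2} holding pathwise.

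For the \emph{sufficiency} direction, I would first apply a Tauberian lemma pathwise: the uniform tightness \eqref{tight-object} together with the convergence in probability \eqref{cubic-rate} should yield
\begin{equation*}
\frac{1}{y}\int_1^y \frac{\widehat{\theta}(dx)}{xe^{\sqrt{2}x}}\;\overset{\P}{\longrightarrow}\;\sqrt{\tfrac{2}{\pi}}.
\end{equation*}
Passing to a deterministic subsequence $y_n\to\infty$ along which this convergence holds almost surely and applying Theorem~\ref{cor-upgrade-thm-1} conditionally on $\theta_0=\theta(\omega)$ gives $\theta_t\overset{\calL_2}{\to}\tilde{\mathcal{E}}_\infty$ given $\theta_0=\theta(\omega)$ for $\P$-a.e.\ $\omega$. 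Integrating against a bounded continuous functional $F$ on $(\M_2,d_2)$ and invoking the Feller property (Theorem~\ref{main-thm-feller}) -- which ensures that $\calP_tF$ is bounded continuous and therefore makes the interchange of limit and integration legitimate via dominated convergence -- then yields the unconditional convergence $\theta_t\overset{\calL_2}{\to}\tilde{\mathcal{E}}_\infty$.

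For the \emph{necessity} direction, I would exploit the Laplace functional representation of $\theta_t$ through the BBM branching structure. For $\phi\in C_0^+(\R)$, setting $u_t^{\phi}(x):=\E\bigl[\exp(-\sum_{k=1}^{n(t)} \phi(x+\chi_k(t)-\sqrt{2}t))\bigr]$, one has
\begin{equation*}
\E\bigl[e^{-\<{\phi,\theta_t}}\bigm|\theta_0\bigr]=\exp\!\Big(\int_\R \log u_t^{\phi}(x)\,\theta_0(dx)\Big),
\end{equation*}
and the travelling-wave asymptotics of the associated KPP equation give $-\log u_t^{\phi}(x)\sim C_\phi\,(-x)e^{-\sqrt{2}x}$ as $x\to-\infty$, with an explicit $\phi$-dependent constant $C_\phi$. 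Convergence $\theta_t\overset{\calL_2}{\to}\tilde{\mathcal{E}}_\infty$ then forces the Laplace functional of $\theta_t$ to match that of $\tilde{\mathcal{E}}_\infty$, and in view of the asymptotic \eqref{lisa-result-with-constant} this constrains the weighted integral $\int_0^\infty xe^{-\sqrt{2}x}\widehat{\theta}(dx)$ to exhibit the cubic growth \eqref{cubic-rate}. The tightness \eqref{tight-object} then follows by choosing a family of test functions $\phi$ approximating Gaussian weights $e^{-\lambda x^{2}}$ and reading off uniform tightness from the tightness of $\<{\phi,\theta_t}$ forced by $\theta_t\overset{\calL_2}{\to}\tilde{\mathcal{E}}_\infty$.

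The main obstacle will be the pathwise Tauberian step in the sufficiency direction: classical Tauberian theorems assume deterministic regularity of the underlying measure, whereas here the random fluctuations in $\theta$ must be controlled via the tightness \eqref{tight-object} in such a way that the in-probability convergence of the Abel transform upgrades to Ces\`aro convergence of the Stieltjes integral. A secondary technical point is the upgrade from conditional vague convergence to $d_2$-convergence on $\M_2$, which requires uniform control of $\max\theta_t$ as well as of the $\alpha_k$-integrals appearing in the metric \eqref{definition-of-metric-d-2}.
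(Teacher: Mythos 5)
Your sufficiency argument contains a genuine gap that cannot be repaired within the route you propose. You want to pass from the in-probability statement
\[
\frac{1}{y}\int_1^y \frac{\widehat{\theta}(dx)}{xe^{\sqrt{2}x}}\;\overset{\P}{\longrightarrow}\;\sqrt{\tfrac{2}{\pi}}
\]
to an almost-sure, pathwise version of \eqref{result-2}, so that Theorem~\ref{cor-upgrade-thm-1} can be applied conditionally on $\theta_0=\theta(\omega)$. But convergence in probability as $y\to\infty$ of a random Ces\`aro average, even for a monotone integral and even with the tightness assumption \eqref{tight-object}, does not imply almost-sure convergence. (One can construct a random monotone $G$ with $G(y)/y^{\rho}\to c$ in probability and with $\lambda^\rho\int e^{-\lambda x}dG(x)$ tight, yet $G(y)/y^\rho$ oscillating on a set of full probability; an independent rarefied-bump construction along blocks $y\in[e^{2n},e^{2n+2}]$ does this.) Moreover, extracting a deterministic subsequence $y_n$ along which convergence holds a.s.\ does not help: Theorem~\ref{cor-upgrade-thm-1} requires \eqref{result-2} to hold for the full limit $y\to\infty$ for that realisation of $\theta$, and the subsequence produced by the standard Borel--Cantelli extraction may be arbitrarily sparse, so the monotonicity of $y\mapsto\int_1^y$ does not permit interpolation between the $y_n$. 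Hence the conditional application of the deterministic theorem is not legitimate.

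The paper does not derive the random statement from the deterministic one; rather, both Theorem~\ref{cor-upgrade-thm-1} and Theorem~\ref{cor-upgrade-thm-2} are corollaries of the unified Theorem~\ref{main-thm}, whose characterisation \eqref{result-1} is itself an in-probability statement and whose proof works in probability throughout (via Bramson's $\psi$-function and Proposition~\ref{key-prop}). The translation from \eqref{result-1} to the pair \eqref{cubic-rate}--\eqref{tight-object} is then done by Proposition~\ref{HLK-p}, a genuinely probabilistic Hardy--Littlewood--Karamata Tauberian theorem. Its proof works at the level of the rescaled random measures $dG_\lambda(x)=\lambda^\rho\,dG(x/\lambda)$: one shows vague tightness, identifies the unique (deterministic) subsequential limit $Cx^\rho/\Gamma(\rho+1)$ through Laplace transforms, and invokes Kallenberg's continuity theorem for random measures to upgrade vague convergence in distribution to in-probability convergence of $G_\lambda(1)$. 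The tightness hypothesis \eqref{tight-object} is precisely what is needed for the Abelian direction in the random case (cf.\ Remark~\ref{boundedness-redundant}); it is not a tool for upgrading to pathwise convergence. Your necessity sketch, while pointing at the right ingredients (Laplace functionals, travelling-wave asymptotics), also bypasses the actual mechanism, namely the inverse Laplace transform/continuity theorem argument around \eqref{final-lap-for-R} which yields $R_s\overset{\P}{\to}\sqrt{2/\pi}$, followed again by Proposition~\ref{HLK-p}.
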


\begin{remark} \label{check-that-tightness}
Note that this {\em if and only if} statement now involves two conditions. Still we shall stress in Section \ref{s.consequences} that the second condition, i.e. the tightness hypothesis
~\eqref{tight-object}  is often easier to check in practice than the first one. Indeed we will prove in Proposition \ref{pr.checkT} that if a fractional moment of the first condition remains finite, then the tightness condition necessarily holds. 
\end{remark}

\vspace{6mm}

Both the above results are consequences of the following unified result which is in fact a stronger statement than the complete characterisation of $\tilde{\scrD}_\infty$. (Since we believe  the stronger statement below is less easy to apprehend at first, we preferred to state Theorems \ref{cor-upgrade-thm-1}  and \ref{cor-upgrade-thm-2} on their own as they are clean {\em if and only if} statements.)
Theorems  \ref{cor-upgrade-thm-1}  and \ref{cor-upgrade-thm-2} will be derived from Theorem \ref{main-thm} below thanks to a probabilistic version of the Hardy-Littlewood-Karamata (HLK) Tauberian theorem which we state and prove in Section \ref{s.Tauber}.

\begin{theorem}\label{main-thm}
Let $\theta_0 = \theta$ be a $\M_2$-valued point process
\begin{enumerate}
\item Suppose 
\begin{equation}\label{conv-for-all-b}
\forall \hspace{1mm}b\in \R, \hspace{2mm} \theta_t([b,\infty)) \overset{\calL}{\to} \tilde{\mathcal{E}}_\infty([b,\infty)) \textrm{ as } t\to \infty.
\end{equation}
Then,
\begin{equation}\label{result-1}
\frac{1}{t^{\frac{3}{2}}}\int_{-\infty}^0 (-x)e^{\sqrt{2}x} e^{-\frac{x^2}{2t}}\theta(dx) \overset{\P}{\longrightarrow} 1.
\end{equation}
In particular, if $\theta_t \overset{\calL_2}{\to}\tilde{\mathcal{E}}_\infty$, then \eqref{result-1} holds.
\item Conversely, if \eqref{result-1} holds, then $\theta_t \overset{\calL_2}{\to}\tilde{\mathcal{E}}_\infty$.
\end{enumerate}

\end{theorem}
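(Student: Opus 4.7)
The plan is to base the proof on the Laplace functional of $\theta_t([b,\infty))$. By branching independence, for $\lambda>0$ and $b\in\R$,
\begin{equation*}
\E\!\left[e^{-\lambda\theta_t([b,\infty))}\bigm| \theta_0=\theta\right]=\exp\!\left(-\int_{\R}\Phi_t^{\lambda,b}(x)\,\theta(dx)\right),
\end{equation*}
where $\Phi_t^{\lambda,b}(x):=-\log\E[e^{-\lambda\theta_t([b,\infty))}\mid \theta_0=\delta_x]$ solves a critical F-KPP equation. The technical backbone I would first establish separately is the uniform asymptotic
\begin{equation*}
\Phi_t^{\lambda,b}(x)=C(\lambda,b)\,g_t(x)\,(1+o(1)),\qquad g_t(x):=\frac{(-x)e^{\sqrt{2}x}e^{-x^2/(2t)}}{t^{3/2}}\mathbf{1}_{x<0},
\end{equation*}
valid as $t\to\infty$, uniformly in $x$ over a range wide enough that the error is negligible after integration against a generic $\eta\in\M_2$. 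Writing $J_t(\theta):=\int g_t(x)\,\theta(dx)$, this yields $\int\Phi_t^{\lambda,b}\,d\theta=C(\lambda,b)J_t(\theta)+o_{\P}(1)$. Applying this identity to the fixed point $\theta=\tE$ itself, for which $J_t(\tE)\overset{\P}{\to}1$ follows from the tail asymptotics \eqref{roughtailEt}--\eqref{lisa-result-with-constant}, identifies $C(\lambda,b)=-\log\E[e^{-\lambda\tE([b,\infty))}]$.

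\textbf{The two directions.} For part (2), assume $J_t(\theta)\overset{\P}{\to}1$. The Laplace identity immediately yields $\E[e^{-\lambda\theta_t([b,\infty))}]\longrightarrow e^{-C(\lambda,b)}=\E[e^{-\lambda\tE([b,\infty))}]$ for every $\lambda>0$. Running the same argument with $\lambda\mathbf{1}_{[b,\infty)}$ replaced by $\sum_j\lambda_j\mathbf{1}_{[b_j,\infty)}$, by $\lambda h$ with $h\in C_0^+(\R)$, or by $\lambda\alpha_k$, yields convergence in distribution of the corresponding functionals of $\theta_t$ to those of $\tE$ (with the analogous prefactor fixed by the same fixed-point argument). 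Combined with convergence of $\P[\theta_t([b,\infty))=0]$, obtained by taking $\lambda\to\infty$ and controlling $\max\theta_t$, these imply $\theta_t\overset{\calL_2}{\to}\tE$ through the metric \eqref{definition-of-metric-d-2}. For part (1), assume $\theta_t([b,\infty))\overset{\calL}{\to}\tE([b,\infty))$ for every $b$. The Laplace identity combined with the key asymptotic now gives
\begin{equation*}
\E\!\left[e^{-C(\lambda,b)\,J_t(\theta)}\right]\;\longrightarrow\;\E\!\left[e^{-\lambda\tE([b,\infty))}\right]\;=\;e^{-C(\lambda,b)}.
\end{equation*}
As $\lambda$ ranges over $(0,\infty)$, $C(\lambda,b)$ sweeps continuously and strictly monotonically over the open interval $\bigl(0,\,-\log\P[\tE([b,\infty))=0]\bigr)$, so the Laplace transform of any subsequential distributional limit of $J_t(\theta)$ coincides with $e^{-c}$ on this interval. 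By analyticity of Laplace transforms, that limit must be the Dirac mass at $1$, hence $J_t(\theta)\overset{\P}{\to}1$.

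\textbf{Main obstacle.} The uniform asymptotic for $\Phi_t^{\lambda,b}$ is the most demanding ingredient. Classical Bramson-type results for the critical F-KPP equation pin down the traveling-wave tail $\sim C z e^{-\sqrt{2}z}$ only in the bounded regime $z:=x-m(t)=O(1)$, whereas here control is required across $|x|\lesssim\sqrt{t}$, since $\eta\in\M_2$ is only forced to decay sub-Gaussianly at $-\infty$. The Gaussian factor $e^{-x^2/(2t)}$ in $g_t$ reflects the large-deviations cost for the spine of a descendant of a particle at $x\approx-u\sqrt{t}$ to travel to a bounded region at time $t$, while the prefactor $(-x)t^{-3/2}$ encodes simultaneously the $\tfrac{3}{2\sqrt{2}}\log t$ Bramson correction in $m(t)$ and the derivative-martingale normalisation. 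I would prove this uniform asymptotic by linearising the F-KPP equation against the kernel $e^{-\sqrt{2}(y-x)}p_t(x,y)$, which produces the Gaussian factor, and controlling the non-linear correction by a barrier/truncation argument à la Bramson carried out uniformly in $x$ down to the Gaussian scale. This is the step where I expect the main difficulty to lie.
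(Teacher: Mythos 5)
The overall strategy here — linearise the Laplace functional, reduce to a single one‑dimensional functional $J_t(\theta)$, and then use uniqueness of Laplace transforms to get the \emph{if and only if} — is the same as in the paper. But there is a genuine gap in what you call the ``technical backbone,'' namely the claimed uniform pointwise asymptotic
\[
\Phi_t^{\lambda,b}(x)=C(\lambda,b)\,g_t(x)\,(1+o(1)),\qquad g_t(x)=\frac{(-x)e^{\sqrt{2}x}e^{-x^2/(2t)}}{t^{3/2}}\mathbf{1}_{x<0}.
\]
This statement is simply false. From Lemma~\ref{sharp-bound} (Lemma 4.7 of \cite{ABK13}) the correct order of magnitude for the FKPP solution at $\sqrt{2}t+x$ carries a factor $\bigl(|x|+\log t\bigr)$ (the Bramson $\tfrac{3}{2\sqrt2}\log t$ shift), not $|x|$. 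For $|x|=O(1)$ these differ by a multiplicative factor of order $\log t$, which can never be absorbed into a multiplicative $(1+o(1))$ error uniformly in $x$. Consequently you cannot hope to prove $\int\Phi_t^{\lambda,b}\,d\theta=C(\lambda,b)J_t(\theta)+o_\P(1)$ as a purely analytic, $\theta$‑free statement: the discrepancy coming from the $\log t$ window, as well as the terms coming from expanding the exact Gaussian kernel $e^{(2xy-y^2)/(2s)}$ around $1$, only become negligible \emph{after} integration against $\theta$, and only once one already knows that $R_s=\sqrt{2/\pi}\,J_s(\theta)$ is tight. That tightness is not free: in the paper it is extracted from the hypothesis \eqref{conv-for-all-b} via the Chebyshev/conditioning argument of Lemma~\ref{tight-from-CGS-BBM}, and it is then used crucially in the proof of Proposition~\ref{key-prop} (in particular to kill the $I_2,I_3$ terms carrying the $\log(s+r)$ factors, through the rather delicate argument around \eqref{last-term}). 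Your sketch skips this entirely and in effect assumes the conclusion of Proposition~\ref{key-prop} as if it were an unconditional FKPP estimate.

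A second, smaller problem is your identification of $C(\lambda,b)$ by ``applying the identity to $\tE$ and using $J_t(\tE)\overset{\P}{\to}1$ from \eqref{roughtailEt}--\eqref{lisa-result-with-constant}.'' That convergence for $\tE$ is precisely Corollary~\ref{cor-E}, which the paper obtains only as a \emph{consequence} of Theorem~\ref{main-thm}, and it explicitly remarks (Section~\ref{cant-change-power-x}) that \eqref{lisa-result-with-constant} alone neither implies nor is implied by it; the deduction does not follow from known results on $\tE$. The paper sidesteps this circularity by identifying the constant directly through Bramson's formula $\Cb(f)=\lim_r\sqrt{2/\pi}\int_0^\infty u_\varphi(r,y+\sqrt{2}r)ye^{\sqrt{2}y}\,dy$ (equation \eqref{constant-f}), without ever evaluating $J_t$ on $\tE$. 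Your final holomorphic/analyticity step to force a subsequential limit of $J_t(\theta)$ to be $\delta_1$ is fine and matches Remark~\ref{holomorphic-laplace}, as does the use of Lemma~\ref{m-convergence-automatic}-type reasoning to upgrade to $\calL_2$ convergence; the missing piece is really the replacement of the false pointwise asymptotic by the $\psi$-function decomposition (Proposition~\ref{bramson}), the $R(r,s,y)\approx yR_s$ comparison (Proposition~\ref{key-prop}), and the tightness input that makes that comparison meaningful.
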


\vspace{5mm}

\begin{remark}
We in fact need an even weaker assumption than \eqref{conv-for-all-b} to conclude \eqref{result-1}. For example, if $\theta_0$ is deterministic, i.e. $\P(\theta_0 = \eta) =1$ for some $\eta \in \M_2$, then \eqref{result-1} holds even if $\theta_t ([0,\infty)) \overset{\calL}{\to} \tilde{\mathcal{E}}_\infty([0,\infty))$. For a random $\theta_0$, the \eqref{result-1} follows even if \eqref{conv-for-all-b} holds for all $b\in B$, where $B$ is any subset of $\R$ which contains at least one limit point\footnote{An element $b\in B$ is a limit point of $B$ if $b\in \overline{B\setminus \{b\}}$.}. 
To keep the statement of Theorem \ref{main-thm} relatively simple, we have moved these details at the end of the proof of Theorem \ref{main-thm}, see Remark \ref{holomorphic-laplace}.
\end{remark}

\begin{remark}
It is worth  noting an interesting consequence of the above result. If \eqref{conv-for-all-b} holds, then by taking $f(x)= \sum_{k=1}^n c_k 1_{x \geq b_k}$ for some  $c_k>0, b_k\in\R$, Theorem \ref{main-thm} implies that $\<{f, \theta_t} \overset{\calL}{\to} \<{f,\tilde{\mathcal{E}}_\infty}$. This in turn implies the joint convergence 
\[(\theta_t([b_1, \infty)),.., \theta_t([b_n, \infty))) \overset{\calL}{\to} (\tilde{\mathcal{E}}_\infty([b_1, \infty)),.., \tilde{\mathcal{E}}_\infty([b_n, \infty))).\]
It is curious to note that the above joint convergence follows automatically from convergence of its marginals. This is an artefact of the "one-dimensional" nature of the characterisation given in the Theorem \ref{main-thm}, i.e. there is only one "degree of freedom" while determining the convergence of $\theta_t$. And, this "degree of freedom" is captured completely by the integral appearing in \eqref{result-1}. 
\end{remark}

\begin{remark}\label{any-other-inv}
The characterisation of the domain of attraction of any other fixed point $\Lmu_{inv} \overset{\calL_2}{=}\tilde{\mathcal{E}}_\infty (\cdot - S)$ can be obtained similarly by repeating the same proof as of Theorem \ref{main-thm}.  In this case, the equation \eqref{result-1} has to be substituted by 
\[\frac{1}{t^{\frac{3}{2}}}\int_{-\infty}^0 (-x)e^{\sqrt{2}x} e^{-\frac{x^2}{2t}}\theta(dx) \overset{\calL}{\longrightarrow} e^{\sqrt{2}S}.\]
When $S=0$, the right hand side of the above is the constant $1$. Since convergence in distribution to a deterministic constant implies convergence in probability, we obtain \eqref{result-1} for the case $\Lmu_{inv} \overset{\calL_2}{=}\tilde{\mathcal{E}}_\infty$. 
\end{remark}

\begin{remark}
It is natural to ask for a characterisation of the domain of attraction in the simpler case of non-interacting Brownian motions whose fixed points were found in \cite{liggett78} (see also our recent work \cite{liggett-new}).

\end{remark} 

\subsection{Two applications of our main result.} $ $

Our first application will be a {\em quenched} version of the following result from \cite{ABK13}. It is proved in \cite{ABK13} that $\theta$ $=$ PPP$(\sqrt{\frac{2}{\pi}}(-x)e^{-\sqrt{2}x}1_{x<0}dx)$ is in the domain of attraction of $\tilde{\mathcal{E}}_\infty$.  This result is stated in the {\em annealed} sense, i.e. the initial probability measure is the law of Poisson Point Process $\mu_{\mathrm{PPP}}$. We will show in Section \ref{s.consequences} that Theorem \ref{cor-upgrade-thm-1} readily implies the following Corollary which is a strengthening of the above result from \cite{ABK13} in two ways:
\bi
\item First, we obtain a {\em quenched} version of the result in \cite{ABK13}.
\item Second, one can further modulate the intensity of the PPP  $(-x) e^{-\sqrt{2}x}$ into $-x(1+\alpha \cos(|x|^\beta)) e^{-\sqrt{2} x}$ for any $\alpha\in[0,1], \beta\in(0,1]$. 
\ei

\begin{corollary}\label{c.1}
Fix any $\alpha\in[0,1]$ and $\beta\in(0,1]$.
Let $\theta_0 = \theta$ be distributed as $\PPP(\sqrt{\frac{2}{\pi}}(-x (1+\alpha \cos(|x|^\beta)))e^{-\sqrt{2}x}1_{x<0}dx)$\footnote{One may also consider Poisson point processes with more general intensity measures. This slow modulation with $1+\alpha \cos(|x|^\beta)$  only gives an idea of how robust the convergence result is. See the proof  in Section \ref{s.consequences} which easily extends to other such examples.}. 
Then, almost every ({\em quenched}) realisation of $\theta$ is in the domain of attraction of $\tilde{\mathcal{E}}_\infty$.
\end{corollary}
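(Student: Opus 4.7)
I would apply Theorem~\ref{cor-upgrade-thm-1} to each fixed realisation of $\theta$. First, observe that $\theta \in \M_2$ almost surely: since the intensity is supported on $(-\infty,0)$, we have $\theta([0,\infty))=0$; and $\theta([-m,-m+1])$ is Poisson with mean of order $me^{\sqrt 2 m}$, so $m^{-2}\log(\theta([-m,-m+1])\vee 1) \to 0$ a.s. It then remains to verify that the Tauberian condition
\[
\frac{1}{y}\int_1^y \frac{\widehat{\theta}(dx)}{x e^{\sqrt{2}x}} \;\longrightarrow\; \sqrt{\tfrac{2}{\pi}} \quad \text{as } y\to\infty
\]
holds almost surely under the PPP law; by Theorem~\ref{cor-upgrade-thm-1} this gives the conclusion.

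\medskip

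\noindent\textbf{Main step: first and second moment.} Writing $I_y := \int_1^y \frac{\widehat\theta(dx)}{xe^{\sqrt{2}x}}$, the reflected intensity of $\widehat\theta$ on $(0,\infty)$ is $\sqrt{\tfrac{2}{\pi}}\,x\,(1+\alpha\cos(x^\beta))e^{\sqrt{2}x}$, so Campbell's formula yields
\[
\E I_y = \sqrt{\tfrac{2}{\pi}}\int_1^y (1+\alpha\cos(x^\beta))\,dx, \qquad \var(I_y) = \sqrt{\tfrac{2}{\pi}}\int_1^y \frac{1+\alpha\cos(x^\beta)}{x\,e^{\sqrt{2}x}}\,dx.
\]
The exponential damping gives $\var(I_y) \le C$ uniformly in $y$. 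For the mean, the substitution $u = x^\beta$ followed by integration by parts on $\int_1^{y^\beta} u^{1/\beta-1}\cos u\,du$ shows $\int_1^y \cos(x^\beta)\,dx = O(y^{1-\beta})$ when $\beta \in (0,1)$ and $O(1)$ when $\beta=1$. In either case this is $o(y)$, so $\E I_y / y \to \sqrt{2/\pi}$.

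\medskip

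\noindent\textbf{Concentration and conclusion.} Chebyshev's inequality gives $\P(|I_n - \E I_n| > n^{2/3}) \le Cn^{-4/3}$, which is summable, so Borel--Cantelli yields $I_n/n \to \sqrt{2/\pi}$ a.s.\ along integers. Monotonicity of $y \mapsto I_y$ extends this to continuous $y$: for $y \in [n,n+1]$, $\frac{I_n}{n+1} \le \frac{I_y}{y} \le \frac{I_{n+1}}{n}$, and both extremes converge a.s.\ to $\sqrt{2/\pi}$. This verifies the quenched Tauberian condition, and Theorem~\ref{cor-upgrade-thm-1} then gives that a.e.\ realisation of $\theta$ lies in $\tilde\scrD_\infty$.

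\medskip

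\noindent\textbf{Main obstacle.} I do not anticipate any essential difficulty; the argument is a second-moment computation made tractable by the dramatic exponential weight $e^{-\sqrt{2}x}$ in the definition of $I_y$, which tames all tail fluctuations of the PPP. The only mildly delicate point is controlling the oscillatory integral $\int_1^y \cos(x^\beta)\,dx$ uniformly across $\beta \in (0,1]$, handled by the standard integration-by-parts estimate above.
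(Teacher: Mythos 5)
Your proof is correct, and it follows the same overall plan as the paper (verify the a.s.\ Tauberian condition~\eqref{result-2} and invoke Theorem~\ref{cor-upgrade-thm-1} realisation-by-realisation), but the technical implementation of the strong law is different. The paper works with the $L^2$-bounded martingale $N_t := \int_1^{t} \frac{\widehat{\theta}(dx) - \sqrt{2/\pi}\,x (1+\alpha \cos(x^\beta)) e^{\sqrt{2}x}dx}{x^2e^{\sqrt{2}x}}$ (note the extra factor $x$ in the denominator), deduces a.s.\ convergence of $N_\infty$ by the martingale convergence theorem, and then obtains~\eqref{result-2} from Kronecker's lemma combined with the Riemann--Lebesgue fact $\int_1^t\cos(x^\beta)dx=o(t)$. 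You instead bound the variance of $I_y=\int_1^y \widehat\theta(dx)/(xe^{\sqrt2 x})$ uniformly (which holds here because of the very fast $e^{-\sqrt{2}x}$ damping), apply Chebyshev plus Borel--Cantelli along integers, and then use monotonicity of $y\mapsto I_y$ to interpolate. Both are valid second-moment arguments that exploit the exponential decay; the paper's martingale/Kronecker route is a bit slicker and avoids the explicit continuous-parameter interpolation, while your route is more elementary and gives a quantitative oscillatory-integral bound $O(y^{1-\beta})$ that is slightly sharper than the $o(t)$ the paper actually needs. One small thing worth stating explicitly is that $\theta\neq 0$ a.s.\ (required for membership in $\M_2$); this holds since the intensity measure has infinite total mass, but you only noted $\theta([0,\infty))=0$.
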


Our second application is the following  application of Theorem \ref{cor-upgrade-thm-2} to the structure of the extremal process $\tilde{\mathcal{E}}_\infty$ in the spirit of \cite{lisa-paper}. Since we know that $\tilde{\mathcal{E}}_\infty$ is a fixed point, $\tilde{\mathcal{E}}_\infty$ needs to be in the domain of attraction of itself. Therefore, Theorem \ref{cor-upgrade-thm-2} implies:

\begin{corollary}\label{cor-E}
For the extremal process $\tilde{\mathcal{E}}_\infty$, as $y\to -\infty$,
\begin{equation}\label{result-for-E-2}
\frac{-1}{y^3} \int_{y}^0 (-x)e^{\sqrt{2}x}\tilde{\mathcal{E}}_\infty(dx)  \overset{\P}{\longrightarrow} \frac{1}{3}\sqrt{\frac{2}{\pi}}.
\end{equation}

\end{corollary}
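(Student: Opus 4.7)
The plan is to apply Theorem~\ref{cor-upgrade-thm-2} directly to the extremal process $\tilde{\mathcal{E}}_\infty$ itself, exploiting the fact that $\tilde{\Lmu}_\infty$ is a fixed point of the semigroup $\calP_t$. The entire proof reduces to checking that the hypotheses apply, running the theorem, and performing a change of variables to reconcile the shape of \eqref{cubic-rate} with that of \eqref{result-for-E-2}.

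First, I would verify that $\tilde{\mathcal{E}}_\infty$ is a legitimate $\M_2$-valued point process. The tail estimate \eqref{roughtailEt} implies that $\tilde{\mathcal{E}}_\infty$ satisfies the integrability condition \eqref{finite-integral} almost surely, and $\tilde{\mathcal{E}}_\infty([0,\infty))<\infty$ a.s.\ since the limiting extremal process of critically drifted BBM has a rightmost particle. Hence $\tilde{\mathcal{E}}_\infty\in\M_2$ a.s., as already noted in the excerpt.

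Next, I would take $\theta_0 = \theta := \tilde{\mathcal{E}}_\infty$. Since $\tilde{\Lmu}_\infty$ is a fixed point of $\calP_t$, the law of $\theta_t$ equals $\tilde{\Lmu}_\infty$ for every $t>0$, so trivially $\theta_t \overset{\calL_2}{\to}\tilde{\mathcal{E}}_\infty$ as $t\to\infty$. The ``only if'' direction of Theorem~\ref{cor-upgrade-thm-2} then yields both \eqref{cubic-rate} and the tightness of \eqref{tight-object} for $\widehat\theta = \widehat{\tilde{\mathcal{E}}_\infty}$. We only need the convergence \eqref{cubic-rate}.

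Finally, I would translate \eqref{cubic-rate} back into a statement about $\tilde{\mathcal{E}}_\infty$ rather than its reflection. Using $\widehat\theta(A) = \theta(-A)$ and the change of variable $u = -x$, one has
\[
\int_0^y xe^{-\sqrt{2}x}\widehat{\tilde{\mathcal{E}}_\infty}(dx) \;=\; \int_{-y}^0 (-u)e^{\sqrt{2}u}\,\tilde{\mathcal{E}}_\infty(du).
\]
Setting $y = -z$ so that $z \to -\infty$ as $y\to+\infty$, and noting $y^3 = -z^3$, we obtain
\[
\frac{-1}{z^3}\int_{z}^0 (-u)e^{\sqrt{2}u}\,\tilde{\mathcal{E}}_\infty(du) \overset{\P}{\longrightarrow} \frac{1}{3}\sqrt{\frac{2}{\pi}},
\]
which is precisely \eqref{result-for-E-2} after renaming $z$ as $y$. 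Since the argument is a direct corollary of Theorem~\ref{cor-upgrade-thm-2}, no real obstacle arises; the only substantive content is the change-of-variable bookkeeping.
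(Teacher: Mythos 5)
Your proof is correct and follows the same route as the paper: since $\tilde{\Lmu}_\infty$ is a fixed point, $\tilde{\mathcal{E}}_\infty$ is trivially in its own domain of attraction, so the ``only if'' direction of Theorem~\ref{cor-upgrade-thm-2} yields \eqref{cubic-rate}, which is \eqref{result-for-E-2} after the change of variables $u=-x$, $z=-y$. The change-of-variable bookkeeping you give is also accurate.
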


\begin{remark}
This result is similar, though different, to the estimate~\eqref{lisa-result-with-constant} proved in \cite{lisa-paper}. See  Subsection \ref{cant-change-power-x} for a discussion on the link between both. 
\end{remark}


\subsection{Idea of the proof of Theorem \ref{main-thm}.}
We are inspired by a work of T. Liggett \cite{liggett78}, especially to the idea of using inverse Laplace transform for addressing our problem. We reduce the convergence $\theta_t \overset{\calL_2}{\to} \tilde{\mathcal{E}}_\infty$ in terms of Laplace transforms of $\theta_t$ and $\tilde{\mathcal{E}}_\infty$. The Laplace transform of $\tilde{\mathcal{E}}_\infty$ is well known from the work of \cite{ABK13, aidekon13}. The Laplace transform of $\theta_t$ can be expressed in terms of $\theta_0$ and solutions to the FKPP equation. By using the work of Bramson \cite{bramson83} on the asymptotic behaviour of solutions to FKPP equation, we further simplify the convergence $\theta_t \overset{\calL_2}{\to} \tilde{\mathcal{E}}_\infty$ in terms of a random variable $R(r,t,y)$ which depends only on $\theta_0$, see \eqref{red-4-1} and \eqref{red-4-2} below. The random variable $R(r,t,y)$ fully captures the contribution of $\theta_0$ towards the convergence of $\theta_t$. Although the expression of $R(r,t,y)$ is complicated in itself, the key ingredient in the proof is the observation that $R(r,t,y)$ behaves like $yR_t$ plus an error term which does not contribute in the limit $t \to \infty$, where $R_t$ is another simpler looking random variable depending only on $\theta_0$ and $t$. We therefore conclude that $\theta_t \overset{\calL_2}{\to} \tilde{\mathcal{E}}_\infty$ if and only if $R_t$ converges in distribution to a constant (and therefore in probability). We here rely on the inverse Laplace transform to obtain an \textit{if and only if} statement.

\subsection*{Organization of the paper.} The rest of this paper is organised as follows. In section \ref{prem} we recall some known results about BBM which will be useful throughout the article. Section \ref{M-2-topology} contains the proof of Theorem \ref{main-thm-feller}. 
 Section \ref{proof-main} contains the proof of Theorem \ref{main-thm}. 
 Section \ref{s.Tauber} proves a probabilistic version of the Hardy-Littlewood-Karamata  Tauberian theorem and contains the proofs of Theorem \ref{cor-upgrade-thm-1} and \ref{cor-upgrade-thm-2}. Finally, 
 Section \ref{s.consequences} gives the proof of Corollary \ref{c.1} and discusses some tangential topics, for example Proposition \ref{pr.meas} which proves that the random shift of a fixed point is a measurable function of the cloud of points. 
 
\subsection*{Notations.}
We will often write $f \lesssim_{a,b,..} g$ meaning $f \le Cg$ for some constant $C < \infty$ depending on $a,b,..$ whose value may change from line to line. We write $f \asymp g$ when $f \lesssim g$ and $g \lesssim f$.

\subsection*{Acknowledgements.}
The second author wishes to thank the {\em ICJ probability lunch team} for an enlightening discussion which led to the viewpoint in Proposition \ref{pr.meas}.
The research of X.C. is supported by Nation Key R\&D Program of China 2022YFA1006500. 
The research of C.G. is supported by the Institut Universitaire de France (IUF) and the French ANR grant ANR-21-CE40-0003. A.S. acknowledges the support from the project \textit{PIC RTI4001: Mathematics, Theoretical Sciences and Science Education.
}

\section{Preliminaries} \label{prem} We recall some results on Laplace transforms, BBM, and the related FKPP equation, which will be used to prove the main theorem. The following results are mainly extracted from \cite{bramson78,bramson83,ABK13,aidekon13,bovier-book}. 

\subsection{Laplace transform.}\label{ss.state}

It is well known that if $\theta$ is point process, then its law is characterised by the Laplace functional $\Phi_\theta$ defined on the set of non-negative Borel measurable functions as follows:
\[
\Phi_\theta(f):=\E[e^{-\<{f,\theta}}], \ \forall f : \R\to\R_+ \textrm{ Borel measurable}.
\]
Usually, we take $f\in C_0^+(\R)$, the class of non-negative, continuous and compactly supported functions, which are sufficient to determine the law of point process, see \cite{kallenberg-book}. However, in this paper we are interested in point processes which live a.s. in the space $\M_2$. It will be convenient for us to consider the class of functions $f$ of the form (ref. [Chapter 7, \cite{bovier-book}])

\begin{equation}\label{f-type} 
f(x)= \sum_{k=1}^n c_k 1_{x \geq b_k}, \textrm{ with } n\in\mathbb{N}^*, c_k>0, b_k\in\R.
\end{equation}

It is not hard to check  that the class of functions of this form is sufficiently rich to characterise the law of a point process in $\M_2$. This class of functions will be important for us in order to be able to rely on Bramson's $\psi$-function introduced  in Section \ref{ss.bramson2}.

\subsection{Extremal process of BBM.}\label{ss.extreme}

For a binary  BBM $(\{\chi_k(t); 1\leq k\leq n(t)\}, t\ge0)$ on the real line, recall that we denote the maximal position at time $t$ by
\[
M_t:=\max_{1\leq k\leq n(t)}\chi_k(t). 
\]
Bramson \cite{bramson78}, \cite{bramson83} proved that $M_t-m(t)$ converges in law to some non-degenerate random variable $M_\infty$, where 
\begin{align}\label{e.m}
m(t):= \sqrt{2}t - \frac{3}{2\sqrt{2}}\log_{+}(t).
\end{align}
Then, Lalley and Sellke showed in \cite{lalley-sellke} that the limiting distribution function $\P(M_\infty \leq x)$ is given by $\E[e^{-\Cb_MZ_\infty e^{-\sqrt{2}x}}]$, where $\Cb_M$ is some positive constant which will appear again in \eqref{omega} and $Z_\infty\in(0,\infty)$ is the a.s. limit of the so-called \textit{derivative martingale}
\[Z_t := \sum_{k=1}^{n(t)}(\sqrt{2}t- \chi_k(t))\exp\bigl\{-\sqrt{2}(\sqrt{2}t- \chi_k(t))\bigr\}.\]
Later, it was proven in \cite{ABK13} and \cite{aidekon13} that the point process defined by
\begin{equation}\label{abk-conv}
    \mathcal{E}_t := \sum_{k=1}^{n(t)}\delta_{\chi_k(t)- m(t)} 
\end{equation}
converges in law to a non-trivial point process $\mathcal{E}_{\infty}$ as $t\to \infty$, in $\calM$. The point process $\mathcal{E}_\infty$ is called the \textit{(limiting) extremal point process of BBM}. The law of $\mathcal{E}_\infty$ can be described as follows. 

Let $\mathcal{P}=\sum_{i\ge1}\delta_{p_i}$ be a Poisson point process (PPP) independent of $Z_\infty$ and with intensity $\sqrt{2}\Cb_Me^{-\sqrt{2}x}dx$. For each atom $p_i$ of $\mathcal{P}$, we attach a point process $\mathcal{D}^i=\sum_{j\ge1}\delta_{\mathcal{D}_j^i}$ where $\calD^i$, $i\ge1$ are i.i.d. copies of certain point process $\calD$ and independent of $(\mathcal{P},Z_\infty)$. In this way, we get
\begin{equation}\label{def-Et}
 \mathcal{E}_{\infty} = \sum_{i, j} \delta_{p_i + \mathcal{D}^i_j + \frac{1}{\sqrt{2}}\log(Z_\infty)}.
 \end{equation}
The point process $\mathcal{E}_\infty$ is thus called a decorated Poisson point process with decoration process $\calD$. Moreover, the decoration process $\calD$ is a point process supported on $(-\infty,0]$ with an atom at $0$ and its precise law is described in (6.8) of \cite{aidekon13}. In \cite{ABK13}, it is shown that $\mathcal{E}_\infty$ is a fixed point of BBM with critical drift while in \cite{CGS-BBM}, it is shown that modulo translation, this fixed point is unique. See also the related work \cite{maillard2021branching} which studies the branching convolution equation satisfied by $\mathcal{E}_\infty$.

It is often convenient to remove the randomness coming from the derivative martingale limit $Z_\infty$. The standard option is to consider the following point process:
\begin{equation}\label{tilde-def}
\widetilde{\mathcal{E}}_\infty := T_{- \frac{1}{\sqrt{2}}\log(Z_\infty)}\mathcal{E}_\infty=\sum_{i, j} \delta_{p_i + \mathcal{D}^i_j},
\end{equation} 
where the operator $T_u$ acts on point processes $\theta$ by shifting each atom by $u$. 
This process  was proved in \cite{aidekon13} (also in \cite{ABK13} but it is not explicitly stated there) to be limit in law of 
\[\widetilde{\mathcal{E}}_{t} := \sum_{k=1}^{n(t)}\delta_{\chi_k(t)- m(t)- \frac{1}{\sqrt{2}}\log(Z_t)}.\]

\subsection{BBM and FKPP equation.} \label{prem-FKPP}

For the binary BBM, let us write $\mathbf{B}_t:=\sum_{k=1}^{n(t)}\delta_{\chi_k(t)}$ its associated point process at time $t$. 
\[
\Phi_{\mathbf{B}_t}(f)=\E[e^{-\<{f,\mathbf{B}_t}}]=\E\left[\prod_{k=1}^{n(t)}e^{-f(\chi_k(t))}\right].
\]
In particular, the distribution function of the maximal position $M_t$ is $\P(M_t\le x)=\E[\prod_{k=1}^{n(t)}\ind{\chi_k(t)\le x}]$ which formally corresponds to $f(\cdot):=\infty 1_{\{\cdot > x\}}$.

Now let us state the following Lemma which highlights the connection between BBM and the Fisher-Kolmogorov-Petrovsky-Piskunov (FKPP) equation. One can refer to McKean \cite{McKean75} and  also to Skorohod \cite{Skorohod64} and to Ikeda, Nagasawa, and Watanabe \cite{Ikeda-Nagasawa-Watanabe1}, \cite{I-N-W2}, \cite{I-N-W3}. 
\begin{lemma}[See Lemma $5.5$ in \cite{bovier-book}]\label{BBM=FKPP}
For any measurable function $\varphi:\mathbb{R} \to [0,1]$, let
\begin{equation}\label{u-def}
u(t,x) = 1 - \mathbb{E}[\Pi_{k=1}^{n(t)}\{1-\varphi(x- \chi_k(t))\}].
\end{equation}
Then $u$ solves the following FKPP equation
\begin{equation}\label{FKPP}
\partial_t u = \frac{1}{2}\partial_x^2 u + u- u^2.
\end{equation}
 with the initial condition $u(0,x) = \varphi(x)$. 
\end{lemma}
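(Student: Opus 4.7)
The plan is to prove the identity by conditioning on the first branching time of the BBM and converting the resulting integral equation into the FKPP PDE. Set $v(t,x) := 1-u(t,x) = \mathbb{E}\bigl[\prod_{k=1}^{n(t)}(1-\varphi(x-\chi_k(t)))\bigr]$, which takes values in $[0,1]$ (so all quantities are well defined). The initial condition $u(0,x)=\varphi(x)$ is immediate since at $t=0$ there is a single particle at the origin and the product reduces to $1-\varphi(x)$.

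First I would condition on the first branching time $\tau$, which is exponential with rate $1$, and on the driving Brownian motion $(B_s)_{s\geq 0}$ of the ancestral particle. On the event $\{\tau > t\}$ there is a single particle at $B_t$, so the product equals $1-\varphi(x-B_t)$. On the event $\{\tau = s \leq t\}$, the strong Markov / branching property at time $s$ says that the BBM restarts from position $B_s$ as two independent BBMs, each contributing an independent copy of $v(t-s, x-B_s)$ to the product. This gives the renewal-type integral equation
\begin{equation}\label{renewal-v}
v(t,x) = e^{-t}\mathbb{E}\bigl[1-\varphi(x-B_t)\bigr] + \int_0^t e^{-s}\,\mathbb{E}\bigl[v(t-s, x-B_s)^2\bigr]\,ds.
\end{equation}
After the change of variable $s\mapsto t-s$ inside the integral and using the Brownian semigroup $P_r$ defined by $P_r g(x)=\mathbb{E}[g(x-B_r)]$, equation~\eqref{renewal-v} reads
\begin{equation}\label{mild-v}
v(t,x) = e^{-t} P_t(1-\varphi)(x) + \int_0^t e^{-(t-s)} P_{t-s}\bigl(v(s,\cdot)^2\bigr)(x)\,ds,
\end{equation}
which is exactly the mild formulation of
\begin{equation}\label{PDE-v}
\partial_t v = \tfrac12 \partial_x^2 v - v + v^2, \qquad v(0,\cdot)=1-\varphi.
\end{equation}

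Next I would deduce smoothness of $v$ on $(0,\infty)\times\R$ from the mild formulation: the heat semigroup $P_r$ is smoothing for $r>0$, and an iteration of~\eqref{mild-v} shows that the right-hand side is smooth in $(t,x)$ for $t>0$, regardless of the mere measurability of $\varphi$. Once $v\in C^{1,2}$, differentiating~\eqref{mild-v} in $t$ and in $x$ (using $\partial_r P_r = \tfrac12 \partial_x^2 P_r$) yields~\eqref{PDE-v} in the classical sense. Finally, substituting $u=1-v$ gives
\[
\partial_t u = -\partial_t v = -\tfrac12 \partial_x^2 v + v - v^2 = \tfrac12 \partial_x^2 u + (1-u) - (1-u)^2 = \tfrac12 \partial_x^2 u + u - u^2,
\]
which is~\eqref{FKPP}, and the initial condition follows from $v(0,x)=1-\varphi(x)$.

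The only genuine technical point is the derivation of~\eqref{renewal-v}: one must justify the interchange of the branching property at time $\tau$ with the infinite product, and the fact that the two subtrees born at $\tau$ are conditionally independent given $(\tau, B_\tau)$ with the same law as the original BBM shifted to $B_\tau$. This is standard from the construction of BBM as a marked branching tree, and everything else is routine semigroup/Duhamel manipulation; if one prefers to avoid any regularity discussion, the whole statement can equivalently be interpreted via the mild formulation~\eqref{mild-v}, which is what we actually need for the applications later.
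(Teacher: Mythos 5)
Your proof is correct and is the standard McKean argument (condition on the first branching time, derive the Duhamel/mild integral equation, bootstrap regularity from the heat semigroup), which is exactly the proof given in the reference [Lemma 5.5, Bovier] that the paper cites rather than reproducing. No gaps beyond the routine smoothing/interchange points you already flag, and those are handled as you describe.
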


In the following, for convenience, we usually write $u_\varphi$ for the solution of FKPP equation with initial function $\varphi$. In particular, when $\varphi(x)=\ind{x\le0}$, we write $u_M(t,x)$ for the associated solution as $u_M(t,x)=\P(M_t\ge x)$.

Immediately, one sees that if $\varphi(x)=1-e^{-f(-x)}$, we get that $\Phi_{\mathbf{B}_t}(f)=1-u_\varphi(t,0)$. Moreover, for the point process $\calE_t$ in \eqref{abk-conv} which is $\mathbf{B}_t$ shifted by $-m(t)$, one has $\Phi_{\calE_t}(f)=1-u_\varphi(t,m(t))$. So, for any $x\in\R$, 
\[
u_\varphi(t, m(t)+x)=1-\Phi_{\calE_t}(f(\cdot-x))\,.
\]

\subsection{Bramson's convergence results on the solutions of FKPP.}\label{ss.bramson1}
Next, let us state a convergence result on the solutions of FKPP equation, due to Bramson \cite{bramson83}. (See also Theorem 4.2 of \cite{ABK13} for a more complete presentation). We only state here a partial result which will be sufficient for our proof. Recall also the definition of the function $t\mapsto m(t)$ in~\eqref{e.m}.

\begin{theorem}[\cite{bramson83}]\label{cvgFKPP}
Let $u_\varphi$ be a solution of the FKPP equation \eqref{FKPP} with initial condition $u(0,x)=\varphi(x)$ 
where $\varphi$ is measurable function satisfying the following conditions:
\begin{align}
(i)\hspace{1cm}& 0\leq \varphi(x) \leq 1; \label{cond1}\\
(ii)\hspace{1cm}&\mbox{For some} \hspace{1mm} v>0, L>0, N>0, \int_x^{x+N} \varphi(y)dy > v \hspace{2mm}\forall x \leq -L; \label{cond2}\\
(iii)\hspace{1cm}&\sup\{x\in\R | \varphi(x) > 0\} < \infty. \label{cond3}
\end{align}
Then as $t\to\infty$, uniformly in $x$, $u_\varphi(t,m(t)+x)$ converges to a positive function $\omega(x)$ where $\omega$ is the unique solution (up to a $\varphi$-dependent translation) of the equation $\frac{1}{2}\omega''+\sqrt{2}\omega'+\omega-\omega^2=0$. This limiting function $\omega$ is called the traveling wave.
\end{theorem}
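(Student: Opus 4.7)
The strategy is the probabilistic route via the McKean representation \eqref{u-def}. I would first specialise to the Heaviside initial condition $\varphi_0(x)=\mathbf 1_{x\le 0}$, for which Lemma \ref{BBM=FKPP} gives $u_{\varphi_0}(t,x)=\P(M_t\ge x)$, and aim to prove $\P(M_t\ge m(t)+x)\to \omega(x)$ uniformly in $x$. The starting point is tightness and non-triviality of $M_t-m(t)$, i.e.\ $\P(M_t\ge m(t)+x)\in(0,1)$ uniformly on compacts in $x$. For this I would use the first-moment estimate $\E[\#\{k\le n(t):\chi_k(t)\ge a\}]=e^t\,\P(B_t\ge a)$ (many-to-one) together with a second-moment/ballot-type lower bound; optimising the exponential rate yields the $\sqrt 2\, t$ ballistic speed, and the sharp $-\tfrac{3}{2\sqrt 2}\log_+ t$ correction emerges from forcing the genealogical path of a near-maximum particle to stay below the line $s\mapsto \sqrt 2\, s$ (a barrier/ballot constraint).

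Next, I would identify any subsequential limit $w(x)=\lim_k u_{\varphi_0}(t_k,m(t_k)+x)$ as a traveling wave. The substitution $v(t,x):=u(t,m(t)+x)$ turns \eqref{FKPP} into $\partial_t v=\tfrac12\partial_x^2 v + m'(t)\,\partial_x v + v - v^2$; since $m'(t)\to\sqrt 2$, parabolic regularity plus Arzel\`a--Ascoli on $v$ give convergence along subsequences to a solution $w$ of $\tfrac12 w''+\sqrt 2\, w' + w - w^2=0$. Monotonicity $x\mapsto v(t,x)\downarrow$ and the tightness step rule out the trivial constants $0$ and $1$, so by the classical Kolmogorov--Petrovsky--Piskunov uniqueness (monotone fronts are unique modulo translation), each subsequential limit equals $\omega(\cdot + c_\infty)$ for some $c_\infty\in\R$. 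The Lalley--Sellke type coupling between $M_t$ and the derivative martingale $Z_t$ (or equivalently direct uniqueness of the selected translation) pins down a single $c_\infty$, upgrading subsequential to full convergence; uniformity in $x$ then follows from monotonicity and parabolic Harnack.

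For a general $\varphi$ satisfying (i)--(iii), I would sandwich the initial data between two Heaviside-type functions. Condition (iii) gives $\varphi\le \mathbf 1_{(-\infty,L_+]}$ for some $L_+$; condition (ii) together with a parabolic smoothing step (evolve for a fixed small time) gives a lower bound of the form $c\cdot \mathbf 1_{(-\infty,L_-]}$. The comparison principle for the semilinear equation \eqref{FKPP}, already visible at the probabilistic level because $u_\varphi$ is monotone in $\varphi$ through \eqref{u-def}, yields two-sided bounds for $u_\varphi(t,m(t)+x)$ by translated versions of $\omega$. Since all monotone fronts are translates of one another, letting the windows close shows that the upper and lower bounds coincide with a single translate of $\omega$, finishing the proof.

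The hard part, and the core of Bramson's original argument, is the sharp $-\tfrac{3}{2\sqrt 2}\log t$ correction: it requires a careful second-moment computation restricted to trajectories whose genealogy stays below $s\mapsto \sqrt 2\, s$, equivalently a spine/Girsanov change of measure producing a Bessel-$3$-like bridge, so that the atypical paths which would spoil the logarithmic constant are suppressed. Everything else (identification of the limit, uniformity, and the reduction from general $\varphi$ to the Heaviside case) is conceptually clean once this delicate estimate is in hand.
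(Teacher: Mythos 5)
The paper gives no proof of Theorem~\ref{cvgFKPP}: it is stated as a citation to Bramson's 1983 memoir \cite{bramson83} and used as a black box in the Preliminaries. So there is no ``paper's proof'' to compare your sketch against --- the appropriate standard is whether your plan would actually yield Bramson's theorem.

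There are two genuine gaps. First, the sandwiching step for general $\varphi$ does not conclude. From conditions (ii)--(iii) you can bound $\varphi$ above by $\mathbf 1_{(-\infty,L_+]}$ and (after a short smoothing time) below by $c\,\mathbf 1_{(-\infty,L_-]}$ with some $c<1$. The comparison principle then gives $\omega(x+a_-)\le\liminf_t u_\varphi(t,m(t)+x)\le\limsup_t u_\varphi(t,m(t)+x)\le\omega(x+a_+)$ for two distinct shifts $a_-<a_+$. Since all monotone fronts are translates of one another, this is a gap of fixed width and does not collapse: ``letting the windows close'' has no content, because the initial data cannot be refined further without already knowing the convergence you are trying to prove. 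Bramson's actual handling of the $\varphi$-dependent shift is the heart of his memoir and requires the detailed first/second-moment estimates encoded in his $\psi$-function (exactly the object this paper imports in Proposition~\ref{bramson}), not a pure comparison argument. Second, invoking a ``Lalley--Sellke type coupling'' to pin down the unique translate for the Heaviside case is circular: Lalley--Sellke (1987) \emph{uses} Bramson's Theorem~\ref{cvgFKPP} as an input, so it cannot supply the uniqueness of $c_\infty$ here. In Bramson's route, full (as opposed to subsequential) convergence follows instead from monotonicity in $t$ of a recentred quantity together with his Feynman--Kac / $\psi$-function control, not from an a posteriori coupling with the derivative martingale.

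The rest of your outline (McKean representation, barrier/second-moment derivation of the $-\frac{3}{2\sqrt2}\log t$ correction, compactness $+$ KPP front uniqueness to identify subsequential limits, parabolic regularity for uniformity) is the standard modern probabilistic scaffolding and is directionally correct, but those two steps are where a real proof must do work you have not supplied.
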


Note that for any function $f$ of the form \eqref{f-type}, $\varphi(x)=1-e^{-f(-x)}$ satisfies the conditions \eqref{cond1}, \eqref{cond2}, \eqref{cond3}. We hence get the convergence of $\Phi_{\calE_t}(f)$. Another example is when $\varphi(x)=\ind{x\le0}$, this theorem shows that 
\[
u_M(t,x+m(t))=\P(M_t\ge m(t)+x)
\]
converges to some limit $\omega_M(x)$ which, according to \cite{lalley-sellke}, turns out to be $1-\E[e^{-\Cb_M Z_\infty e^{-\sqrt{2}x}}]$ with some constant $\Cb_M>0$. Moreover, it is also known (see e.g. \cite{bramson83}) that 
\begin{equation}\label{omega}
\omega_M(x) \sim \Cb_Mxe^{-\sqrt{2}x} \hspace{2mm}\mbox{as}\hspace{2mm} x\to +\infty.
\end{equation}
Then with a little more effort, one sees from Theorem \ref{cvgFKPP} that for any $\varphi$ satisfying \eqref{cond1}, \eqref{cond2}, \eqref{cond3}, there exists some constant $C_\varphi>0$ such that
\[
\lim_{t\to\infty}u_\varphi(t, x+m(t))=1-\E[e^{-C_\varphi Z_\infty e^{-\sqrt{2}x}}].
\]
In fact, in view of Corollary 6.51 of \cite{bovier-book}, $C_\varphi=\lim_{x\uparrow\infty}\lim_{t\to\infty}\frac{e^{\sqrt{2}x}}{x}u_\varphi(t,x+m(t))$.

As mentioned above, we can take $\varphi(x)=1-e^{-f(-x)}$ for any $f$ of the form \eqref{f-type} and obtain that
\begin{equation}\label{cvgEt}
\lim_{t\to\infty}\E[e^{-\<{f(\cdot-x),\calE_t}}]=1-\lim_{t\to\infty}u_\varphi(t, x+m(t))=\E[e^{-\Cb(f) Z_\infty e^{-\sqrt{2}x}}]
\end{equation}
where we write $\Cb(f)$ for $C_\varphi$. {\em (N.B we use two different notations here for the same constant in order to avoid confusions. Indeed in \cite{ABK13}, $\Cb(f)$ is defined for $f$ not for the initial function $\varphi$).} We will need the following explicit expression for the constant $\Cb(f)$.
\begin{proposition}[Proposition 7.9 and Lemma 7.5 of \cite{bovier-book}]
For any $f$ of the form \eqref{f-type}, the positive  constant $\Cb(f)$ can be written as the following limit which exists
\begin{equation}\label{constant-f}
\Cb(f)=C_\varphi=\lim_{r\to\infty}\sqrt{\frac{2}{\pi}}\int_0^\infty u_\varphi(r, y+\sqrt{2}r)ye^{\sqrt{2} y} dy.
\end{equation}
Furthermore, for any real number $a$, 
\begin{equation}\label{C-shift}
\Cb(f(\cdot - a)) = \Cb(f) e^{-\sqrt{2}a}.
\end{equation}
(N.B. It is known that \eqref{cvgEt} and \eqref{constant-f} remain valid if we take $\varphi(x)=1-e^{-f(-x)}$ with  $f\in C_c^+(\R)$, see \cite{ABK13}). 
\end{proposition}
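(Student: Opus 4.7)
The statement has two independent parts; my plan is to dispatch them separately: \eqref{C-shift} by a short Laplace-transform identification, and \eqref{constant-f} by Bramson's quantitative Gaussian-scale asymptotic. For \eqref{C-shift}, set $g(y):=f(y-a)$ and $\tilde\varphi(x):=1-e^{-g(-x)}=\varphi(x+a)$. Translation invariance of \eqref{FKPP} yields $u_{\tilde\varphi}(t,\cdot)=u_\varphi(t,\cdot+a)$. Applying \eqref{cvgEt} to $g$ at argument $x$ and to $f$ at the shifted argument $x+a$, and using $\<{g(\cdot-x),\calE_t}=\<{f(\cdot-(x+a)),\calE_t}$, one obtains for every $x\in\R$
\[\E[e^{-\Cb(g)\,Z_\infty e^{-\sqrt 2 x}}]=\E[e^{-\Cb(f)e^{-\sqrt 2 a}\,Z_\infty e^{-\sqrt 2 x}}].\]
Writing $\lambda=e^{-\sqrt 2 x}\in(0,\infty)$, this says the positive random variables $\Cb(g) Z_\infty$ and $\Cb(f)e^{-\sqrt 2 a} Z_\infty$ share a Laplace transform, hence the same law. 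Since $Z_\infty$ is positive and non-degenerate, this forces $\Cb(g)=\Cb(f)e^{-\sqrt 2 a}$.

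\textbf{Integral representation.} For \eqref{constant-f} the central input I would invoke is Bramson's refined asymptotic (Chapter 3 of \cite{bovier-book}, following \cite{bramson83}): there exists $C>0$ such that, for all $r\ge 1$ and $y\ge 0$,
\[u_\varphi(r,y+\sqrt 2 r)\le C\,(y+\log r)\,r^{-3/2}\,e^{-\sqrt 2 y-y^2/(2r)},\]
and, uniformly on $y\in[0,K\sqrt r]$ for each fixed $K>0$,
\[u_\varphi(r,y+\sqrt 2 r)=(1+o(1))\,C_\varphi\Big(y+\tfrac{3}{2\sqrt 2}\log r\Big)\,r^{-3/2}\,e^{-\sqrt 2 y-y^2/(2r)}.\]
Granted these, the remaining work is purely computational: multiply by $y e^{\sqrt 2 y}$, change variables $y=\sqrt r\,u$, and split the integration at $u=K$. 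The main piece converges to $C_\varphi\int_0^K u^2 e^{-u^2/2}\,du$ while the $\log r$ cross term contributes only $O(\log r/\sqrt r)=o(1)$; the tail $u>K$ is dominated by $C\int_K^\infty u^2 e^{-u^2/2}\,du$ via the uniform bound and vanishes as $K\to\infty$. Since $\int_0^\infty u^2 e^{-u^2/2}\,du=\sqrt{\pi/2}$, the prefactor $\sqrt{2/\pi}$ exactly recovers $C_\varphi$, giving \eqref{constant-f}.

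\textbf{Main obstacle.} The shift identity is immediate once \eqref{cvgEt} is in hand. The substantive difficulty is the Bramson-scale estimate above: it is a genuinely quantitative strengthening of Theorem \ref{cvgFKPP}, obtained via the McKean representation \eqref{u-def} combined with Brownian-bridge conditioning and a heat-kernel comparison with the linearised FKPP equation. Since these estimates are fully developed in Chapter 3 of \cite{bovier-book}, my plan is to invoke them here rather than reprove them.
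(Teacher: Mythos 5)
Your proof of the shift identity \eqref{C-shift} is correct and essentially the standard one: combining \eqref{cvgEt} applied to $g:=f(\cdot-a)$ at argument $x$ with \eqref{cvgEt} applied to $f$ at argument $x+a$, and using that $Z_\infty>0$ is non-degenerate so that $c\mapsto \E[e^{-\lambda c Z_\infty}]$ is injective, does the job. For \eqref{constant-f}, however, there is a subtlety you should be aware of: the paper does not prove this proposition at all, it cites Proposition~7.9 and Lemma~7.5 of \cite{bovier-book}, and the route there (following Lemma~4.5 and Proposition~3.2 of \cite{ABK13}) is genuinely different from yours. Rather than invoking a sharp pointwise asymptotic of $u_\varphi(r,\cdot)$, they substitute Bramson's $\psi$-representation (Proposition~\ref{bramson}) into the limit defining $C_\varphi$ and take $t\to\infty$ with $r$ fixed; the linearisation $1-e^{-u}\approx u$ inside $\psi$ then produces exactly $xe^{-\sqrt 2 x}\cdot\sqrt{2/\pi}\int_0^\infty u_\varphi(r,y+\sqrt 2 r)ye^{\sqrt 2 y}\,dy$ as the leading behaviour of $u_\varphi(t,m(t)+x)$ for large $x$, with Lemma~\ref{y-order} controlling the tails of the $y$-integral and allowing $r\to\infty$ to be taken last. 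Your alternative computation (substitution $y=\sqrt r\,u$, split at $u=K$, Gaussian integral $\int_0^\infty u^2e^{-u^2/2}\,du=\sqrt{\pi/2}$) is algebraically correct, but the multiplicative asymptotic it rests on, namely that $u_\varphi(r,y+\sqrt 2 r)=(1+o(1))C_\varphi\bigl(y+\tfrac{3}{2\sqrt 2}\log r\bigr)r^{-3/2}e^{-\sqrt 2 y-y^2/(2r)}$ uniformly on $y\in[0,K\sqrt r]$, is strictly stronger than anything quoted in this paper: Theorem~\ref{cvgFKPP} gives only additively uniform convergence $u_\varphi(r,m(r)+\cdot)\to\omega(\cdot)$, which says nothing multiplicative in the regime $x\sim\sqrt r$ where $\omega$ is exponentially small, and Lemmas~\ref{sharp-bound}, \ref{lower-bound} give only two-sided bounds without identifying $C_\varphi$. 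More worryingly, if you trace the references, this sharp multiplicative asymptotic is itself obtained in the literature as a \emph{consequence} of the $\psi$-function machinery and of \eqref{constant-f}, not prior to it, so as written your argument risks being logically downstream of the result you set out to prove. Either take the $\psi$-function route directly, or cite a precise self-contained statement of the sharp asymptotic in Bramson/Bovier that is established before \eqref{constant-f} in their development.
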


Recall that it was proved in \cite{ABK13} and \cite{aidekon13} that the point process $\calE_t$ converges in law to the point process $\calE_\infty$. In view of \eqref{cvgEt}, we then have the following Laplace functional for the limiting extremal process $\calE_\infty$.

\begin{proposition}[Proposition $3.2$ of \cite{ABK13}] \label{Lap-Et}
For any $f$ of form \eqref{f-type}, 
\[\E[e^{-\langle f,\mathcal{E}_\infty \rangle}] = \E[e^{-Z_\infty\Cb(f)}].\]
\end{proposition}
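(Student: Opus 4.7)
The plan is to combine two facts already recorded in the excerpt: the limit identity \eqref{cvgEt} coming from Bramson's theorem, and the weak convergence $\calE_t \Rightarrow \calE_\infty$ proved in \cite{ABK13,aidekon13}. For $f$ of the step form \eqref{f-type}, I would first verify that $\varphi(x) := 1 - e^{-f(-x)}$ satisfies Bramson's hypotheses \eqref{cond1}--\eqref{cond3}: \eqref{cond1} is immediate since $f\geq 0$; \eqref{cond3} holds because $f$ is supported on $[\min_k b_k,\infty)$, so $\varphi$ vanishes on $(-\min_k b_k,\infty)$; and \eqref{cond2} follows from $\varphi(x)\to 1-e^{-\sum_k c_k}>0$ as $x\to-\infty$. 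Lemma \ref{BBM=FKPP} applied to $\mathbf{B}_t$ then yields $\E[e^{-\<{f,\calE_t}}] = 1 - u_\varphi(t, m(t))$, and \eqref{cvgEt} evaluated at $x = 0$ gives
\[
\lim_{t \to \infty} \E[e^{-\<{f, \calE_t}}] \;=\; \E[e^{-\Cb(f)\,Z_\infty}],
\]
with $\Cb(f)$ explicitly given by \eqref{constant-f}.

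It then remains to identify this limit with $\E[e^{-\<{f, \calE_\infty}}]$ via the weak convergence $\calE_t \Rightarrow \calE_\infty$. Since $f$ is a step function (discontinuous at the $b_k$ and not compactly supported on the right), the functional $\eta \mapsto e^{-\<{f,\eta}}$ is not continuous in the vague topology on $\calM$, and one cannot directly appeal to the portmanteau theorem. I would resolve this by sandwiching $f$ between continuous compactly supported approximants $f_\epsilon^{-, R} \leq f \cdot 1_{(-\infty, R]} \leq f_\epsilon^{+, R}$, obtained by truncating $f$ at a large level $R$ on the right and linearly smoothing each jump across a window of width $\epsilon$. Because $\max \calE_t = M_t - m(t)$ is tight (Bramson, Lalley--Sellke), for any $\delta > 0$ one can choose $R$ so that $\P(\max \calE_t > R) \leq \delta$ uniformly in $t$, and on the complementary event $\<{f, \calE_t} = \<{f \cdot 1_{(-\infty, R]}, \calE_t}$. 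Vague convergence now applies to each $f_\epsilon^{\pm, R}$, yielding (after passing to liminf/limsup in $t$ and using monotonicity of $\Phi$ in the test function)
\[
\Phi_{\calE_\infty}(f_\epsilon^{+,R}) \;\leq\; \E[e^{-\Cb(f)\,Z_\infty}] \;\leq\; \Phi_{\calE_\infty}(f_\epsilon^{-,R})\,.
\]
Sending $\epsilon \downarrow 0$ and then $R \uparrow \infty$, $\delta \downarrow 0$ collapses the sandwich by dominated convergence, using the fact that $\calE_\infty$ almost surely has no atom at any given deterministic point (immediate from the decorated PPP structure \eqref{def-Et} whose atom positions are absolutely continuous).

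The main obstacle is precisely this last continuity-transfer step: vague weak convergence does not automatically preserve Laplace functionals of the discontinuous, right-unbounded step functions of \eqref{f-type}, so the tightness of $\max \calE_t$ combined with the sandwich by continuous compactly supported approximants are the essential ingredients. Once that is handled, the proposition follows at once by uniqueness of limits and the explicit value of $\Cb(f)$ in \eqref{constant-f}.
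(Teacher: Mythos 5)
Your proposal is correct. The paper itself offers no proof for this proposition — it is cited as Proposition~3.2 of \cite{ABK13} — but the one-line justification the paper gives (``In view of \eqref{cvgEt}, we then have the following Laplace functional...'') is precisely the argument you unpack: combine the convergence of Laplace functionals from Bramson's theory (Fact~\eqref{cvgEt} at $x=0$) with the vague weak convergence $\calE_t \Rightarrow \calE_\infty$ from \cite{ABK13,aidekon13}, then transfer to the discontinuous, right-unbounded step functions of \eqref{f-type} via the standard sandwich argument (continuous compactly supported approximants + tightness of $M_t - m(t)$ + absence of deterministic atoms of $\calE_\infty$). The verification of Bramson's hypotheses \eqref{cond1}--\eqref{cond3} for $\varphi(x) = 1 - e^{-f(-x)}$ and the mechanics of the sandwich (dominated convergence in $\epsilon$, then $R\uparrow\infty$, then $\delta\downarrow 0$) are all sound; this is essentially the intended route.
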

\ni

It was also proven in \cite{ABK13} (although not clearly stated, see [Section $2.6$,\cite{CGS-BBM}] for details) that for $f$ of the form \eqref{f-type}, 

\begin{equation}\label{tilde-lap}
\E[e^{-\langle f,\tilde{\mathcal{E}}_\infty \rangle}] = e^{-\Cb(f)}.
\end{equation}

\subsection{The $\psi$ function from Bramson.}\label{ss.bramson2}
Our proof will rely at a key place on a way to control the solutions of the FKPP equation at a large time $t$ by its behaviour at a much earlier time $r\ll t$. This is quantified using the so-called ``$\psi$-function'' from Bramson \cite{bramson83}. See also the work by Chauvin and Rouault \cite{chauvin} which influenced some of the developments below.
\begin{proposition}[Proposition $8.3$ of \cite{bramson83}, Proposition $4.3$ of \cite{ABK13}]\label{bramson}
Let $\varphi(x)$ be a measurable function satisfying the conditions \eqref{cond1},\eqref{cond2} and \eqref{cond3}. And let $u_\varphi$ be the solution of FKPP equation with initial condition $u(0,x)= \varphi(x)$. Define for any $X\in\R$ and $t>r>0$,
\[\psi(r,t,\sqrt{2}t + X) := \frac{e^{-\sqrt{2}X}}{\sqrt{2\pi(t-r)}}\int_0^\infty u_\varphi(r, y+ \sqrt{2}r)e^{\sqrt{2}y}e^{\frac{-(y-X)^2}{2(t-r)}}\bigl\{1- e^{-2y\frac{X+ \frac{3}{2\sqrt{2}}\log(t)}{t-r}}\bigr\}dy.\]
Then for all $r$ large enough (depending only on the initial condition $u(0,\cdot)$), $t\geq 8r$ and $X \geq 8r - \frac{3}{2\sqrt{2}}\log(t)$, 

\begin{equation}\label{crucial}
\gamma_r^{-1}\psi(r,t,\sqrt{2}t + X) \leq u_\varphi(t, \sqrt{2}t + X) \leq \gamma_r\psi(r,t,\sqrt{2}t + X),
\end{equation}
where $\gamma_r \downarrow 1$ as $r\to \infty$.
\end{proposition}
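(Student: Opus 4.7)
The plan is to combine the McKean representation of FKPP solutions (Lemma~\ref{BBM=FKPP}) with the branching property, then linearise carefully around the front. Applying the branching property to the BBM at time $t-r$ gives
\[
1-u_\varphi(t,\sqrt{2} t + X) = \E\left[\prod_{k=1}^{n(t-r)} \bigl(1-u_\varphi(r, \sqrt{2} t + X - \chi_k(t-r))\bigr)\right].
\]
Conditions \eqref{cond1}--\eqref{cond3} together with Theorem~\ref{cvgFKPP} ensure that $y\mapsto u_\varphi(r, y+\sqrt{2} r)$ is essentially localised near $y=0$, so the factors in the product are non-negligible only for particles reaching the vicinity of $\sqrt{2}(t-r)+X$. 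Because only a vanishing fraction of BBM particles come so close to the front, these factors are both rare and small, which is what makes the linearisation meaningful.

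Next I would use $-\log(1-a) = a + O(a^2)$ for small $a$ to write
\[
u_\varphi(t,\sqrt{2} t + X) \;\approx\; \E\left[\sum_{k=1}^{n(t-r)} u_\varphi\bigl(r, \sqrt{2} t + X - \chi_k(t-r)\bigr)\right],
\]
and then apply the many-to-one lemma for BBM, giving $e^{t-r}\,\E[u_\varphi(r,\sqrt{2} t + X - B_{t-r})]$ with $B$ a standard Brownian motion. The change of variables $y = \sqrt{2} t + X - \sqrt{2} r - B_{t-r}$, together with completion of squares in the Gaussian exponent (equivalently, an exponential $-\sqrt{2}$-tilt which absorbs $e^{t-r}$), reproduces exactly the factor $\frac{e^{-\sqrt{2} X}}{\sqrt{2\pi(t-r)}}\,e^{\sqrt{2} y}\,e^{-(y-X)^2/(2(t-r))}$ appearing in $\psi$.

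The final bracket $\{1-e^{-2y(X+\frac{3}{2\sqrt{2}}\log t)/(t-r)}\}$ comes from restricting the first-moment formula to BBM trajectories whose ancestral path stays below the Bramson barrier $s\mapsto \sqrt{2} s - \frac{3}{2\sqrt{2}}\log t$. Paths venturing above this barrier feel enough of the nonlinearity $-u^2$ to be essentially killed off by time $t$, so they do not contribute to $u_\varphi(t,\cdot)$, while the bulk of $u_\varphi(t,\cdot)$ is captured by the sub-barrier contribution. Under the exponential tilt the Brownian motion becomes (after conditioning on its endpoint) a Brownian bridge between $y+\sqrt{2} r$ and $\sqrt{2} t + X$, whose probability to stay below a straight line is given by the reflection principle and evaluates precisely to $1-\exp\!\bigl(-2y(X+\tfrac{3}{2\sqrt{2}}\log t)/(t-r)\bigr)$.

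The main obstacle is the lower bound. One must show that the second-order term $\E\bigl[(\sum_k a_k)^2\bigr]$ arising from the expansion of $-\log\prod_k(1-a_k)$ is negligible compared with the first moment: this amounts to verifying that distinct BBM particles at time $t-r$ are typically far enough apart for their contributions not to overlap, and requires sharp two-particle estimates near the front. The hypothesis $t \ge 8r$ provides a wide time window on which the linearisation is valid, while the lower bound on $X$ keeps us safely below the wave front so that the linear picture is accurate. Finally, the convergence $\gamma_r \downarrow 1$ reflects the fact that, as $r$ grows, Theorem~\ref{cvgFKPP} gives an increasingly sharp localisation of $u_\varphi(r,\cdot)$ near the traveling wave, so both the linearisation error and the second-moment overcount become uniformly small.
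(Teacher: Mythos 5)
The paper does not actually prove this proposition; it cites Proposition 8.3 of Bramson (1983) and Proposition 4.3 of ABK (2013) as external results. So there is no in-paper proof to compare against, but I can still assess the sketch on its own terms, and there is a genuine gap in it.

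The main problem is that the branching-process linearisation you set up cannot produce the bracket. Linearising McKean's product representation and applying many-to-one yields exactly $e^{t-r}\,\E[u_\varphi(r,\sqrt{2}t+X-B_{t-r})]$, and the Gaussian change of variables then gives $\psi$ \emph{without} the factor $\{1-e^{-2y(X+\frac{3}{2\sqrt{2}}\log t)/(t-r)}\}$. That unrestricted first moment is an upper bound for $u_\varphi(t,\cdot)$ over all trajectories, and it is generically \emph{much larger} than $\psi$: near the front ($X=O(1)$, $y=O(\sqrt{r})$, $t\gg r$) the bracket is of order $y(X+\frac{3}{2\sqrt{2}}\log t)/t \ll 1$. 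So the inequality $u_\varphi \leq \gamma_r\psi$ is strictly sharper than what the linearised first moment can give, and the sub-barrier restriction is not an optional refinement but the whole content of the estimate. You then invoke the restriction by saying paths above the barrier ``feel enough of the nonlinearity $-u^2$ to be essentially killed,'' which is indeed the right heuristic, but it is not a consequence of the branching linearisation you start from: in the branching picture the above-barrier trajectories still contribute fully to the first moment $\E[\sum_k a_k]$, and suppressing them would require tracking how the \emph{product} $\prod_k(1-a_k)$ differs from $\exp(-\sum_k a_k)$ on the event that the cloud goes above the barrier, i.e.\ genuinely multi-particle correlations, not a one-path cutoff.

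Bramson's actual proof runs through the Feynman--Kac representation $u_\varphi(t,x)=\E^x\bigl[u_\varphi(r,B_{t-r})\exp\!\bigl(\int_0^{t-r}(1-u_\varphi(t-s,B_s))\,ds\bigr)\bigr]$, which is the other ``McKean formula.'' There the barrier restriction is natural and exact: for a single Brownian path below the Bramson curve the exponential weight is uniformly close to $e^{t-r}$, while for a path that climbs above the curve the integrand $1-u_\varphi$ drops and the weight is strongly suppressed, and the residual contribution from such paths can be shown to vanish as $r\to\infty$. The reflection-principle computation of the Brownian bridge probability then produces the bracket exactly. Your sketch has the right intuition and the right identity for the bridge below a line, but it is grafted onto the branching representation, where it does not follow. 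Relatedly, your remark that the lower bound reduces to ``showing the second-order term is negligible compared with the first moment'' is not accurate: since $\psi$ can be much smaller than the unrestricted first moment, the second-order corrections are not small in comparison; they are what converts the first moment into the bracketed quantity, and one needs the Feynman--Kac path decomposition (or an equivalent device) to make the comparison quantitative.
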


\ni
We end this long list of prerequisites with the following statements on tail estimates describing  essentially what happens away from the window $[-\sqrt{t}/\delta, -\delta \sqrt{t}]$.

\begin{lemma}[Lemma $4.6$ of \cite{ABK13}]\label{y-order}
Let $\varphi(x)$ be a measurable function satisfying the conditions \eqref{cond1},\eqref{cond2} and \eqref{cond3}. And let $u_\varphi$ be the solution of FKPP equation with initial condition $u(0,x)= \varphi(x)$. Then, for any $x\in\R$,
\[\lim_{A_1 \downarrow 0}\limsup_{r\to \infty}\int_0^{A_1\sqrt{r}}u_\varphi(r, x+y+ \sqrt{2}r)ye^{\sqrt{2}y}dy =0,\]
and 
\[\lim_{A_2 \uparrow \infty}\limsup_{r\to \infty}\int_{A_2\sqrt{r}}^{\infty}u_\varphi(r, x+y+ \sqrt{2}r)ye^{\sqrt{2}y}dy =0.\]
\end{lemma}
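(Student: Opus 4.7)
The strategy is to derive a single uniform upper bound on $u_\varphi(r,\sqrt{2}\,r+x+y)$ for $y\ge 0$ carrying explicit Gaussian decay in $y$, and then to estimate both integrals directly. The main tool is Bramson's $\psi$-function bound from Proposition~\ref{bramson}. Fix a reference time $r_0$ large enough that Proposition~\ref{bramson} applies with initial condition $\varphi$. For $r\ge 8r_0$ and $x+y\ge 8r_0-\frac{3}{2\sqrt{2}}\log r$ (which holds for all $y\ge 0$ once $r$ is large enough depending on $x$ and $r_0$), one has
\[
u_\varphi\bigl(r,\sqrt{2}\,r+x+y\bigr)\;\le\;\gamma_{r_0}\,\psi\bigl(r_0,r,\sqrt{2}\,r+x+y\bigr).
\]

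In the defining integral of $\psi$, use $1-e^{-s}\le s$ to replace the bracket factor by $\frac{2y'(X+\frac{3}{2\sqrt{2}}\log r)}{r-r_0}$, with $X:=x+y$. Split the remaining $y'$-integral at $y'=X/2$ (when $X\ge 0$; the bounded range of $y$ where $X<0$ contributes only an $o(1)$ term as $r\to\infty$, by dominated convergence together with the fact that the traveling wave satisfies $\omega(z)\to 0$ as $z\to +\infty$). On $[0,X/2]$ one has $(y'-X)^2\ge X^2/4$, so the heat-kernel factor is bounded by $e^{-X^2/(8(r-r_0))}$, while $\int_0^\infty u_\varphi(r_0,y'+\sqrt{2}\,r_0)\,y'e^{\sqrt{2}\,y'}\,dy'$ is a finite constant depending only on $r_0$. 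On $[X/2,\infty)$, a simple first-moment bound at the fixed time $r_0$ combined with the Gaussian tail of Brownian motion gives $u_\varphi(r_0,y'+\sqrt{2}\,r_0)\,y'e^{\sqrt{2}\,y'}\lesssim_{r_0} y'e^{-(y')^2/(4r_0)}$, whose contribution is of order $e^{-X^2/(16 r_0)}$, which is dominated by $e^{-X^2/(8(r-r_0))}$ once $r_0\ll r$. Combining the two pieces yields, uniformly in $y\ge 0$ and for all $r$ sufficiently large,
\[
u_\varphi\bigl(r,\sqrt{2}\,r+x+y\bigr)\;\le\; C_{x,r_0}\,\frac{y+\log r}{r^{3/2}}\,e^{-\sqrt{2}\,y}\,e^{-y^2/(9r)}.
\]

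With this bound, both integrals in the lemma reduce to elementary calculations after multiplying by $ye^{\sqrt{2}y}$. For the first estimate (small $y$), drop the Gaussian factor and compute
\[
\int_0^{A_1\sqrt{r}}\frac{y(y+\log r)}{r^{3/2}}\,dy\;=\;\frac{A_1^3}{3}+O\!\left(\frac{A_1^2\log r}{\sqrt{r}}\right),
\]
which tends to $0$ first as $r\to\infty$ and then as $A_1\downarrow 0$. For the second estimate (large $y$), substitute $u=y/\sqrt{r}$ to obtain
\[
\int_{A_2\sqrt{r}}^\infty\frac{y(y+\log r)}{r^{3/2}}\,e^{-y^2/(9r)}\,dy\;=\;\int_{A_2}^\infty u^2\,e^{-u^2/9}\,du+O\!\left(\frac{\log r}{\sqrt{r}}\right),
\]
which tends to $0$ first as $r\to\infty$ and then as $A_2\uparrow\infty$.

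The hard part will be the middle step where the Gaussian factor $e^{-y^2/(9r)}$ has to be extracted from $\psi$. Without it, the weaker bound $u_\varphi(r,\cdot)\lesssim r^{-3/2}(y+\log r)e^{-\sqrt{2}\,y}$, which matches only the polynomial tail $\omega_M(x)\sim \Cb_M\,x\,e^{-\sqrt{2}\,x}$ recalled in~\eqref{omega}, would produce an integrand of size $y^2/r^{3/2}$ that is not integrable on $[A_2\sqrt{r},\infty)$. The split-integral argument above is precisely what transports the Gaussian decay from the heat kernel inside $\psi$ into the bound on $u_\varphi(r,\cdot)$, using that the time-$r_0$ factor $u_\varphi(r_0,\cdot)\,y'e^{\sqrt{2}\,y'}$ is Gaussian-concentrated at the small scale $\sqrt{r_0}\ll\sqrt{r}$.
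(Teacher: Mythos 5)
The paper does not prove this lemma; it cites it as Lemma~4.6 of \cite{ABK13}, so there is no in-paper argument to compare against. Your proof is essentially correct: the pointwise bound you extract from the $\psi$-function, namely $u_\varphi(r,\sqrt{2}r+x+y)\lesssim_{x,r_0}(y+\log r)\,r^{-3/2}\,e^{-\sqrt{2}y}\,e^{-cy^2/r}$, is valid once $X+\tfrac{3}{2\sqrt{2}}\log r>0$ (guaranteed by the constraint $X\ge 8r_0-\tfrac{3}{2\sqrt{2}}\log r$ in Proposition~\ref{bramson}), the split at $y'=X/2$ correctly transports Gaussian decay from the heat kernel to the bound (the $[X/2,\infty)$ piece with decay $e^{-X^2/16r_0}$ is indeed majorized by $e^{-X^2/8(r-r_0)}$ once $r\ge 3r_0$), and the two elementary integrals then give exactly the stated limits. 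The bounded range where $X=x+y<0$ is handled a little informally, but the uniform convergence $u_\varphi(r,m(r)+\cdot)\to\omega(\cdot)$ from Theorem~\ref{cvgFKPP} and $\omega(z)\to0$ as $z\to\infty$ indeed makes that contribution $o_r(1)$.

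That said, there is a shorter route to the same pointwise bound using only material already in the paper. Conditions~\eqref{cond1} and~\eqref{cond3} give $\varphi(x)\le 1_{\{x\le L_\varphi\}}$ with $L_\varphi:=\sup\{x:\varphi(x)>0\}<\infty$, and the probabilistic representation in Lemma~\ref{BBM=FKPP} shows $u_\varphi$ is monotone in $\varphi$, so $u_\varphi(t,z)\le u_M(t,z-L_\varphi)$. Lemma~\ref{sharp-bound} (Lemma~4.7 of \cite{ABK13}) then yields the bound
\[
u_\varphi(r,\sqrt{2}r+x+y)\le c_1\,(x+y-L_\varphi+\log r)\,r^{-3/2}\,e^{-\sqrt{2}(x+y-L_\varphi)}\,e^{-(x+y-L_\varphi)^2/2r}
\]
for all $y\ge 0$ once $r$ is large enough (so that $x-L_\varphi\ge-\tfrac12\log r$), and your two integral estimates apply verbatim. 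In other words, the ``hard part'' you flag — extracting the Gaussian factor by splitting the $\psi$-integral — essentially reproves Lemma~\ref{sharp-bound} for general $\varphi$, whereas the comparison principle lets you cite it directly. Both routes work; the comparison route is the one that meshes most economically with what the paper already records.
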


\begin{lemma}[Lemma $4.7$ of \cite{ABK13}]\label{sharp-bound}
There exist a constant $c_1>0$ such that for each $x\geq -\frac{1}{2}\log(t) $ and $t$ large enough, 
\begin{equation}\label{upper-bound-on-uM}
u_M(t, \sqrt{2}t+x) \leq c_1 (x + \log(t))t^{-\frac{3}{2}}e^{-\sqrt{2}x - \frac{x^2}{2t}}.
\end{equation}
\end{lemma}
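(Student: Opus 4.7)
The plan is to deduce the stated bound directly from Bramson's $\psi$-function machinery (Proposition~\ref{bramson}) applied to the initial condition $\varphi(x) = \mathbf 1_{(-\infty, 0]}(x)$, for which the associated FKPP solution is exactly $u_\varphi = u_M$ and which trivially satisfies conditions \eqref{cond1}--\eqref{cond3}. Fix once and for all an $r$ large enough that $\gamma_r \leq 2$ in \eqref{crucial}. Under the standing hypothesis $x \geq -\tfrac12 \log t$, the admissibility condition $x \geq 8r - \tfrac{3}{2\sqrt 2}\log t$ of Proposition~\ref{bramson} is satisfied for all $t$ sufficiently large (since $\tfrac{3}{2\sqrt 2} > \tfrac12$), so
\[ u_M(t, \sqrt 2 t + x) \;\leq\; 2\, \psi(r, t, \sqrt 2 t + x), \]
and it suffices to bound $\psi$.

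For the bracket inside $\psi$, observe that $X := x + \tfrac{3}{2\sqrt 2}\log t \geq (\tfrac{3}{2\sqrt 2} - \tfrac12)\log t > 0$ for $t$ large, so the elementary inequality $1 - e^{-\alpha y} \leq \alpha y$ (valid for $\alpha, y \geq 0$) applied with $\alpha = 2X/(t-r)$ gives
\[ \psi(r, t, \sqrt 2 t + x) \;\leq\; \frac{e^{-\sqrt 2 x}}{\sqrt{2\pi (t-r)}}\cdot \frac{2X}{t-r} \int_0^\infty u_M(r, y + \sqrt 2 r)\, y\, e^{\sqrt 2 y}\, e^{-(y-x)^2/(2(t-r))}\, dy. \]
For the polynomial-Gaussian integral, I would use the \emph{first-moment (many-to-one)} bound on the tail of the BBM maximum at time $r$: since $\mathbb E[n(r)] = e^r$ and each particle is a standard Brownian motion,
\[ u_M(r, y + \sqrt 2 r) \;=\; \mathbb P(M_r \geq y + \sqrt 2 r) \;\leq\; e^r\, \mathbb P(B_r \geq y + \sqrt 2 r) \;\leq\; e^{-y^2/(2r) - \sqrt 2 y} \qquad (y \geq 0). \]
Plugging this in cancels the factors $e^{\pm \sqrt 2 y}$ and reduces the problem to estimating $\int_0^\infty y\, e^{-y^2/(2r) - (y-x)^2/(2(t-r))}\, dy$.

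The crux is now the algebraic identity
\[ \frac{y^2}{2r} + \frac{(y-x)^2}{2(t-r)} \;=\; \frac{t}{2r(t-r)}\Bigl(y - \tfrac{rx}{t}\Bigr)^{\!2} + \frac{x^2}{2t}, \]
which extracts \emph{exactly} the sought-after Gaussian factor $e^{-x^2/(2t)}$ and reduces the remaining integral to an elementary first moment of a Gaussian of variance $\asymp r$ centred at $rx/t$, bounded by $C_r(1 + x_+/\sqrt t)$. Assembling everything, using $X \asymp x + \log t$ and $t - r \asymp t$, yields
\[ u_M(t, \sqrt 2 t + x) \;\lesssim_r\; \frac{e^{-\sqrt 2 x}}{\sqrt t}\cdot \frac{x + \log t}{t}\cdot (1 + x_+/\sqrt t)\, e^{-x^2/(2t)}, \]
which matches the claimed bound for $|x| \lesssim \sqrt t$; outside that window the Gaussian $e^{-x^2/(2t)}$ dominates all polynomial corrections and the bound follows a fortiori. \textbf{The main obstacle} is recognising the precise completion of the square above: any imprecision there replaces $1/(2t)$ by $1/(2(t-r))$ or similar in the exponent, which is harmless qualitatively but spoils the sharp form of the bound. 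This is what forces $r$ to be kept \emph{fixed} as $t \to \infty$, so that $r/t \to 0$ and the correct coefficient $1/(2t)$ emerges cleanly.
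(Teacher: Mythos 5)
The paper does not prove this lemma: it is quoted verbatim from \cite{ABK13} (their Lemma~4.7), so there is no in-paper proof to compare against. Your attempt reconstructs the argument from scratch using the ingredients the paper does quote, namely Proposition~\ref{bramson} (Bramson's $\psi$-function) and a many-to-one first-moment bound on $u_M(r,\,y+\sqrt 2 r)$, which is almost certainly the route taken in \cite{ABK13} itself. The overall structure --- reduce to $\psi$ with a fixed large $r$, linearise the bracket via $1-e^{-\beta}\le\beta$, kill the $e^{\pm\sqrt 2 y}$ with the Chernoff bound on $u_M(r,\cdot)$, and complete the square to extract $e^{-x^2/(2t)}$ exactly --- is correct, and the completion-of-square identity you display is verified.

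There are, however, two small but real issues in the last step. First, the unnormalised Gaussian first moment is bounded by $\sigma^2+\sqrt{2\pi}\,\max(\mu,0)\,\sigma$ with $\mu=rx/t$ and $\sigma^2=r(t-r)/t\asymp r$; this gives $C_r\bigl(1+\max(x,0)/t\bigr)$, not $C_r\bigl(1+\max(x,0)/\sqrt t\bigr)$ as you wrote. This slip actually works in your favour: with the correct estimate the extra factor is $O(1)$ on the whole range $-\tfrac12\log t\le x\le t$, so no case distinction is needed there. Second, your closing remark that ``outside that window the Gaussian $e^{-x^2/(2t)}$ dominates all polynomial corrections and the bound follows a fortiori'' is not a valid argument: the target bound \eqref{upper-bound-on-uM} carries the \emph{same} Gaussian factor $e^{-x^2/(2t)}$, so a spare polynomial factor cannot be ``absorbed'' into it. To cover $x>t$, instead use the elementary first-moment/Chernoff estimate $u_M(t,\sqrt 2 t+x)\le e^t\,\P(B_t\ge\sqrt 2 t+x)\lesssim \tfrac{\sqrt t}{\sqrt 2 t + x}\,e^{-\sqrt 2 x - x^2/(2t)}$, which for $x\ge t$ is $\lesssim t^{-1/2}e^{-\sqrt 2 x - x^2/(2t)}\le x\,t^{-3/2}e^{-\sqrt 2 x-x^2/(2t)}$ and hence already implies \eqref{upper-bound-on-uM}. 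With those two corrections the proof is complete and clean.
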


We will also need the following lower bound on $u_M(t, \sqrt{2}t+x)$. Since we could not find it written in the literature, we also include a short proof. Note that the exponent $e^{-x^2/t}$ below is not sharp. However, it will be sufficient for our purpose.

\begin{lemma}\label{lower-bound}
There exist a constant $c_2>0$ such that for all $x\geq0$ and for all $t$ large enough, 
\begin{equation}\label{lower-bound-on-uM}
u_M(t, \sqrt{2}t+x) \geq c_2\frac{(x+\log(t))}{t^{\frac{3}{2}}} e^{-\sqrt{2}x} e^{-\frac{x^2}{t}}.
\end{equation}
\end{lemma}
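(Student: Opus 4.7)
The strategy is to combine the lower-bound half of Bramson's Proposition \ref{bramson} (which will handle $x$ up to $\sqrt{2}t$) with the trivial one-particle lower bound (for $x\ge \sqrt{2}t$). Applying Proposition \ref{bramson} with $\varphi = \mathbf{1}_{(-\infty,0]}$ (so $u_\varphi=u_M$) gives, for $r$ large enough, $t\ge 8r$ and $x\ge 8r-\tfrac{3}{2\sqrt{2}}\log t$ (automatic for $x\ge 0$ and $t$ large),
\[
u_M(t,\sqrt{2}t+x)\;\ge\;\gamma_r^{-1}\,\psi(r,t,\sqrt{2}t+x).
\]
By Lemma \ref{y-order} applied to $\varphi=\mathbf{1}_{(-\infty,0]}$ together with the representation \eqref{constant-f} and the fact that $\Cb_M>0$, I would fix $0<A_1<A_2$ and $r$ large enough so that
\[
c_0 \;:=\; \int_{A_1\sqrt{r}}^{A_2\sqrt{r}} u_M(r,y+\sqrt{2}r)\,y\,e^{\sqrt{2}y}\,dy \;>\; 0.
\]
All subsequent lower bounds on $\psi$ will come from restricting the integral in its definition to $y\in[A_1\sqrt{r},A_2\sqrt{r}]$ and estimating each factor of the integrand from below.

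Two elementary estimates drive the calculation. First, $x,y\ge 0$ implies $(y-x)^2\le y^2+x^2\le A_2^2 r+x^2$, and combined with $1/(2(t-r))\le 1/t$ (for $t\ge 2r$) this yields, for $t$ large enough,
\[
e^{-(y-x)^2/(2(t-r))} \;\ge\; \tfrac{1}{2}\,e^{-x^2/t}
\]
uniformly in $y\in[A_1\sqrt{r},A_2\sqrt{r}]$. Second, the calculus bound $1-e^{-u}\ge \tfrac12\min(u,1)$ gives $1-e^{-2y\alpha}\ge \min(y\alpha,\tfrac12)$, where $\alpha:=(x+\tfrac{3}{2\sqrt{2}}\log t)/(t-r)$; and the inequality $\min(y\alpha,1/2)\ge y\min(\alpha,1/(2A_2\sqrt{r}))$, valid uniformly for $y\in[0,A_2\sqrt{r}]$, then produces
\[
\psi(r,t,\sqrt{2}t+x) \;\gtrsim\; \frac{c_0\,\min\!\bigl(\alpha,\,1/(2A_2\sqrt{r})\bigr)}{\sqrt{t}}\,e^{-\sqrt{2}x-x^2/t}.
\]

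If $\alpha\le 1/(2A_2\sqrt{r})$, then $\min=\alpha\ge (x+\log t)/t$ and the above is already of the form $\gtrsim (x+\log t)/t^{3/2}\,e^{-\sqrt{2}x-x^2/t}$, which is the claim. If $\alpha>1/(2A_2\sqrt{r})$ but $x\le \sqrt{2}t$, then $\min=1/(2A_2\sqrt{r})$ is a constant depending on $r$, so $\psi\gtrsim 1/\sqrt{rt}\cdot e^{-\sqrt{2}x-x^2/t}$; since $(x+\log t)/t^{3/2}\le 2\sqrt{2}/\sqrt{t}$ in this range, the claim follows with $c_2$ proportional to $1/\sqrt{r}$. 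For the remaining regime $x\ge \sqrt{2}t$, Bramson's estimate is no longer the right tool and I would invoke the trivial one-particle lower bound: any fixed ancestral line of the BBM evolves as a standard Brownian motion, hence
\[
u_M(t,\sqrt{2}t+x) \;\ge\; \P(B_t\ge \sqrt{2}t+x) \;\ge\; \frac{c\sqrt{t}}{\sqrt{2}t+x}\,e^{-t-\sqrt{2}x-x^2/(2t)}
\]
by the Mills-ratio lower bound for the Gaussian tail. The inequality $x^2/(2t)\ge t$ for $x\ge \sqrt{2}t$ gives $e^{x^2/(2t)-t}\ge 1$, and combined with $(\sqrt{2}t+x)(x+\log t)\lesssim x^2$ in this regime, a short rearrangement yields a lower bound of the form $c'(x+\log t)/t^{3/2}\cdot e^{-\sqrt{2}x-x^2/t}$, closing the argument. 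The only real difficulty is bookkeeping constants so that a single $c_2$ works uniformly across all three regimes; this is possible because the claimed exponent $-x^2/t$ is strictly weaker than the sharp rate $-x^2/(2t)$, leaving ample slack.
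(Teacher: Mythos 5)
Your proof is correct, but it takes a genuinely different route than the paper's in its second half, and it is worth noting where the two part ways. Both proofs begin identically: apply the lower bound in Bramson's Proposition \ref{bramson} with $\varphi = \mathbf{1}_{(-\infty,0]}$, restrict the $\psi$-integral to $y\in[A_1\sqrt{r},A_2\sqrt{r}]$, and use Lemma \ref{y-order} (together with \eqref{constant-f}) to extract a positive constant $c_0$ from that window. The divergence is in how the integrand is bounded below. You estimate the Gaussian via $(y-x)^2\le y^2+x^2$ (valid since $x,y\ge0$), which is exactly equivalent to \emph{discarding} the cross term $e^{xy/(t-r)}\ge 1$; this is what later forces you to split into three regimes in $x$ and to invoke the one-particle Gaussian tail for $x\ge\sqrt{2}t$, since the Bramson bound alone (after throwing away $e^{xy/(t-r)}$) only yields $\psi\gtrsim t^{-1/2}e^{-\sqrt{2}x-x^2/t}$ in the middle regime, which cannot match $(x+\log t)/t^{3/2}$ when $x\gg t$. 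The paper instead writes $e^{-(y-x)^2/(2(t-r))}=e^{-x^2/(2(t-r))}e^{xy/(t-r)}e^{-y^2/(2(t-r))}$, factors out $(x+\tfrac{3}{2\sqrt{2}}\log t)/(t-r)^{3/2}$, and is left to show that $e^{xy/(t-r)}H\bigl(2y(x+\tfrac{3}{2\sqrt{2}}\log t)/(t-r)\bigr)\ge c$ uniformly in $x\ge 0$; the retained cross term $e^{xy/(t-r)}$ exactly compensates the decay of $H$ as $x$ grows, so no case split and no auxiliary one-particle bound is needed. Your version is slightly lossier in this sense — it needs an additional ingredient (the BM tail estimate for $x\ge\sqrt{2}t$) and hence more bookkeeping — but it is correct, and the $e^{-x^2/t}$ versus $e^{-x^2/(2t)}$ slack you point out at the end is indeed what lets the third case close.
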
 

\begin{proof}
Using Bramson's estimate \eqref{crucial}, we know that for $r$ large enough, $t\geq 8r$ and $x\geq 8r-\frac{3}{2\sqrt{2}}\log(t)$, we have 
\[u_M(t, \sqrt{2}t+x) \geq \gamma_r^{-1} \frac{e^{-\sqrt{2}x}}{\sqrt{2\pi(t-r)}}\int_0^\infty u_M(r, y+ \sqrt{2}r)e^{\sqrt{2}y}e^{\frac{-(y-x)^2}{2(t-r)}}\bigl\{1- e^{-2y\frac{x+ \frac{3}{2\sqrt{2}}\log(t)}{t-r}}\bigr\}dy.\]

This implies that 
\begin{align} 
\nonumber&\frac{u_M(t, \sqrt{2}t+x)}{\frac{1}{(t-r)^{\frac{3}{2}}}(x+ \frac{3}{2\sqrt{2}}\log(t))e^{-\sqrt{2}x}e^{-\frac{x^2}{2(t-r)}}}  \\  \nonumber& \geq \sqrt{\frac{2}{\pi}}\gamma_r^{-1} \int_0^\infty u_M(r, y+ \sqrt{2}r)ye^{\sqrt{2}y}e^{\frac{-y^2 + 2xy}{2(t-r)}}H\biggl(\frac{2y(x+ \frac{3}{2\sqrt{2}}\log(t))}{t-r}\biggr)dy,
\end{align}
where $H(x):= \frac{1-e^{-x}}{x}$.

Using Lemma \ref{y-order}, we can choose constants $0<A_1<A_2 < \infty$ and $c>0$ such that for all $r$ large enough, 
\[ \int_{A_1\sqrt{r}}^{A_2\sqrt{r}} u_M(r, y+ \sqrt{2}r) ye^{\sqrt{2}y}dy \geq c >0.\] 

Further note that for $y \in [A_1\sqrt{r}, A_2\sqrt{r}]$ and $t$ large enough (depending on $r$, but $r$ is a well chosen large enough fixed constant), $e^{\frac{-y^2}{2(t-r)}} \geq c >0$. Therefore,

\begin{align*}
&\int_0^\infty u_M(r, y+ \sqrt{2}r)ye^{\sqrt{2}y}e^{\frac{-y^2 + 2xy}{2(t-r)}}H\biggl(\frac{2y(x+ \frac{3}{2\sqrt{2}}\log(t))}{t-r}\biggr)dy  \\
&  \geq \int_{A_1\sqrt{r}}^{A_2\sqrt{r}}u_M(r, y+ \sqrt{2}r)ye^{\sqrt{2}y}e^{\frac{-y^2 + 2xy}{2(t-r)}}H\biggl(\frac{2y(x+ \frac{3}{2\sqrt{2}}\log(t))}{t-r}\biggr)dy \\
& \geq c \int_{A_1\sqrt{r}}^{A_2\sqrt{r}}u_M(r, y+ \sqrt{2}r)ye^{\sqrt{2}y}e^{\frac{xy}{t-r}}H\biggl(\frac{2y(x+ \frac{3}{2\sqrt{2}}\log(t))}{t-r}\biggr)dy. 
\end{align*}

Finally note that $\inf_{x>0}\frac{e^{x} - e^{-x}}{x} > 0$. This implies that $e^{\frac{xy}{t-r}}H\biggl(\frac{2y(x+ \frac{3}{2\sqrt{2}}\log(t))}{t-r}\biggr) \geq c $ for all $t$ large enough and $x\geq 0$,$y\leq A_2 \sqrt{r}$. This completes the proof. 
\end{proof}

\section{Topological facts about $\M_2$ and proof of Theorem \ref{main-thm-feller}}\label{M-2-topology}

In order to prove Theorem \ref{main-thm-feller} we will need to establish some topological facts about $\M_2$ which are recorded as follows. The proofs in this section follows fairly standard arguments. However, we decided to verify each of the following claims rigorously for the new space $\M_2$. In the proofs of this section, we will repeatedly use the Portamanteau theorem without mentioning it. We will repeatedly use the function $h_{a,b}(x)$ defined by  
 
 \begin{equation} \label{hab}
h_{a,b}(x) = 
\begin{cases}
 0 & \text{if } x \leq a,\\
 \frac{x-a}{b-a} & \text{if } x \in [a,b],\\
 1 & \text{if } x \geq  b.
\end{cases}
\end{equation}
Also recall functions $\alpha_k$ defined by \eqref{define-alpha}. \\

The following lemma allows us to conclude convergence in $\M_2$ from the vague convergence.

\begin{lemma}\label{simple-check}
Let $\eta_n\in \M_2$ and $\eta \in \M$ such that $\eta_n \overset{v}{\to} \eta$. Assume that for each $k\geq 1$, $\sup_{n}\<{\alpha_k,\eta_n} <\infty$, and $\limsup_{n \to \infty} \max \eta_n \neq \pm\infty$\footnote{This simply means that $\max \eta_n$ is bounded from above and it does not converge to $-\infty$.}. Then $\eta \in \M_2$ and $\eta_n \overset{d_2}{\to} \eta$.
\end{lemma}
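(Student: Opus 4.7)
The plan is to verify that $\eta \in \M_2$ and then to check each of the three ingredients of the metric $d_2$ separately.

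For membership in $\M_2$: the finiteness $\eta([0,\infty)) < \infty$ follows from the upper bound $M := \limsup_n \max \eta_n < \infty$, since vague convergence tested against any $f \in C_0^+(\R)$ supported in $(M+\epsilon, \infty)$ yields $\eta((M+\epsilon, \infty)) = 0$, while $\eta$ is locally finite on the bounded interval $[0, M+1]$. For the tail condition \eqref{finite-integral}, I would fix $k$ and truncate by a cutoff $g_N \in C_0^+(\R)$ equal to $1$ on $[-N, 0]$ and supported in $[-N-1, 0]$; then $\alpha_k g_N \in C_0^+(\R)$ and vague convergence together with $\sup_n \langle \alpha_k, \eta_n \rangle =: C_k < \infty$ give $\langle \alpha_k g_N, \eta \rangle = \lim_n \langle \alpha_k g_N, \eta_n \rangle \leq C_k$, and monotone convergence as $N \to \infty$ yields $\langle \alpha_k, \eta \rangle \leq C_k < \infty$ for every $k$. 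That $\eta \neq 0$ follows from $L := \limsup_n \max \eta_n > -\infty$: along a subsequence with $\max \eta_{n_j} \to L$, testing against a compactly supported bump around $L$ forces an atom of $\eta$ in every neighbourhood of $L$.

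The main obstacle is to show $\max \eta_n \to \max \eta$, since boundary convergence of maxima is not automatic under vague limits. The bump argument above in fact shows $\eta(\{L\}) \geq 1$ (shrink the bumps and use that $\eta$ is integer-valued and locally finite), so $\max \eta \geq L$; the reverse inequality holds because the support of $\eta$ lies in $(-\infty, L]$ by the first paragraph, hence $\max \eta = L$. To upgrade limsup to an actual limit, suppose for contradiction that some subsequence satisfies $\max \eta_{n_k} \to L' < L$ (possibly $L' = -\infty$). Pick $\epsilon > 0$ with $L' + \epsilon < L$; then eventually $\eta_{n_k}((L'+\epsilon, \infty)) = 0$, and testing against an $f \in C_0^+(\R)$ supported in $(L' + \epsilon, L+1)$ with $f = 1$ on a neighbourhood of $L$ gives $\langle f, \eta_{n_k} \rangle = 0$, whereas $\langle f, \eta \rangle \geq 1$ from the atom of $\eta$ at $L$, a contradiction.

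Finally, for the $\alpha_k$ terms in $d_2$, the key estimate is that $\alpha_k(x)/\alpha_{k+1}(x) = e^{-x^2/(k(k+1))} \leq e^{-N^2/(k(k+1))}$ for $x \leq -N$, so with the same cutoff $g_N$,
\[
\langle \alpha_k (1 - g_N), \eta_n \rangle \leq e^{-N^2/(k(k+1))} \langle \alpha_{k+1}, \eta_n \rangle \leq e^{-N^2/(k(k+1))} C_{k+1}
\]
uniformly in $n$, and the same bound holds for $\eta$ by the monotone convergence argument above. Choosing $N$ large first (to kill the tail uniformly) and then $n$ large (to apply vague convergence to the compactly supported piece $\alpha_k g_N$) shows $\langle \alpha_k, \eta_n \rangle \to \langle \alpha_k, \eta \rangle$ for each $k$. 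Combined with $d(\eta_n, \eta) \to 0$ (which is the hypothesis $\eta_n \overset{v}{\to} \eta$ rewritten in the metric $d$), the convergence of maxima from the preceding paragraph, and dominated convergence in the series (each term bounded by $2^{-k}$), we conclude $d_2(\eta_n, \eta) \to 0$.
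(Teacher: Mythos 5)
Your proof is correct. It reaches the same conclusion as the paper's, but through a noticeably different mechanism for the $\alpha_k$ terms: where the paper introduces the finite measures $\nu_n(dx) = \alpha_k(x)\eta_n(dx)$, checks uniform integrability via $\nu_n(\{x \leq -M\}) \leq e^{-M^2/2k}\langle\alpha_{2k},\eta_n\rangle$, and then invokes Prokhorov's theorem to upgrade vague to weak convergence (and hence convergence of total masses), you avoid Prokhorov entirely by exploiting the explicit relation $\alpha_k(x)/\alpha_{k+1}(x) = e^{-x^2/(k(k+1))}$ and running a hands-on $3\epsilon$ estimate with the cutoff $g_N$. Your approach is more elementary and arguably more transparent about where the uniformity comes from, at the cost of being tied to the specific form of the $\alpha_k$. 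Your treatment of the maxima is also organised differently: the paper argues directly that $\max\eta_n \to \max\eta$ by testing against the open set $(\max\eta - \epsilon, \max\eta + \epsilon)$ via Portmanteau, without first identifying $\max\eta$, whereas you first pin down $\max\eta = L := \limsup_n\max\eta_n$ by showing $\eta(\{L\}) \geq 1$ using shrinking bumps and integer-valuedness, then rule out subsequences drifting away by contradiction; this works, though it requires slightly more bookkeeping (and when $L' = -\infty$ your phrase ``pick $\epsilon > 0$ with $L' + \epsilon < L$'' needs a trivial rewording, say ``eventually $\max\eta_{n_k} < L - 1$''). Finally, your separate MCT-with-cutoff verification that $\langle\alpha_k,\eta\rangle < \infty$ is made redundant by your last step (which gives $\langle\alpha_k,\eta_n\rangle \to \langle\alpha_k,\eta\rangle$ and hence finiteness), but it is harmless to include it.
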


\begin{proof} 
We first verify that $\eta \neq 0$. On the contrary, suppose $\eta = 0$. Then, $\eta_n(A) \to 0$ for all compact set $A\subset \R$. Let $M$ be such that $\max \eta_n \leq M$ for all $n$. It follows that for all $x\in \R$, $\eta_n([x,\infty)) = \eta_n([x,M]) \to 0$. This implies that $\limsup_{n\to \infty} \max \eta_n \leq x$. Since this is true for any $x$, this in turn implies that $\limsup_{n\to \infty} \max \eta_n =-\infty$, which is a contradiction to our assumptions. So $\eta \neq 0$. It also follows that $\max \eta \leq M$, for otherwise we can choose a sufficiently small open interval $O \subset (M, \infty)$ which contains at least one point of $\eta$. Since $\eta_n(O) =0$ and $\eta(O)\geq 1$, this contradicts $\eta_n \overset{v}{\to} \eta$. Similarly, it also follows that $\max \eta_n \to \max \eta$. To see this, note that for any $\epsilon >0$,  
\[\liminf_{n\to \infty} \eta_n((\max \eta -\epsilon, \max \eta + \epsilon)) \geq \eta((\max \eta -\epsilon, \max \eta + \epsilon)) \geq 1.\]  
Therefore, for all $n$ large enough, $(\max \eta -\epsilon, \max \eta + \epsilon)$ contains at least one point of $\eta_n$. So, $\max \eta_n \geq \max \eta -\epsilon$. Also, by considering the set $[\max \eta + \epsilon,M]$ and noting that $\eta([\max \eta + \epsilon,M]) =0$, we conclude that for all $n$ large enough, $\max \eta_n \leq \max \eta + \epsilon$. This implies that $\max \eta_n \to \max \eta$. \\

Now, for each fixed $k$, introduce $\nu_n(dx) := \alpha_k(x)\eta_n(dx)$, $\nu(dx):=\alpha_k(x)\eta(dx)$, where $\alpha_k$ are given by \eqref{define-alpha}. Since $\eta_n \overset{v}{\to} \eta$, it easily follows that $\nu_n \overset{v}{\to} \nu$. By assumption, the total mass $\nu_n(\R)$ is bounded in $n$. Furthermore, since $\alpha_k(x) =0$ for $x\geq 0$, 
\[\nu_n(\{x \bigl | |x|\geq M\}) = \nu_n(\{x \bigl | x\leq -M\}) = \int_{(-\infty, -M]} e^{\frac{-x^2}{k}}\eta_n(dx) \leq e^{\frac{-M^2}{2k}}\<{\alpha_{2k},\eta_n}. \]
This implies that for all $\epsilon >0$, there exists $M$ large enough such that \[\sup_{n} \nu_n(\{x \bigl | |x|\geq M\}) \leq \epsilon.\]
Using Prokhorov theorem, this implies that every subsequence of $\nu_n$ has a weakly convergent subsequence. Also, since $\nu_n \overset{v}{\to} \nu$, it follows that $\nu_n \overset{w}{\to} \nu$. Therefore, $\<{\alpha_k,\eta_n} = \<{1, \nu_n} \to \<{1,\nu} = \<{\alpha_k,\eta}$. It thus implies that $\eta \in \M_2 $ and $\eta_n \overset{d_2}{\to} \eta$.

\end{proof}

The topology on $\M_2$ generated by the metric $d_2$ has another equivalent intrinsic description. Let $C_{d_2}^+(\R)$ denote the set of all non-negative bounded continuous functions $f: \R \to \R$ such that for some $\lambda >0$,
\begin{equation}\label{for-some-lambda} 
\limsup_{x\to -\infty} |f(x)|e^{\lambda x^2} < \infty.
\end{equation}
This condition can be equivalently written as 
 \[ \limsup_{x\to -\infty} \frac{\log(|f(x)|)}{x^2} < 0.\]
 
 The convergence in $\M_2$ can be characterised as follows. 
  
 \begin{lemma}\label{alt-des}
Let $\eta_n, \eta \in \M_2 $. Then $\eta_n \overset{d_2}{\to} \eta$ if and only if 
\begin{equation}\label{f-in-d-2}
\<{f,\eta_n} \to \<{f,\eta} \hspace{2mm}\mbox{for all} \hspace{2mm} f\in C_{d_2}^{+}(\R).
\end{equation}
\end{lemma}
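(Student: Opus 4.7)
The plan is to treat the two implications separately. The ``$\Leftarrow$'' direction amounts to a verification that the test functions defining $d_2$ all belong to $C_{d_2}^+(\R)$, together with a small bump-function argument for the $\max$-term. The ``$\Rightarrow$'' direction is where the real work lies: one must control $\<{f,\eta_n}$ by cutting $f$ into a compactly supported piece (handled by vague convergence and the uniform bound on $\max\eta_n$) and a tail at $-\infty$, with the tail estimate obtained by playing two different functions $\alpha_k$ against each other.

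For ``$\Leftarrow$'', assuming $\<{f,\eta_n}\to\<{f,\eta}$ for every $f\in C_{d_2}^+(\R)$, I first note that each $h_k\in C_0^+(\R)\subset C_{d_2}^+(\R)$ and each $\alpha_k$ lies in $C_{d_2}^+(\R)$ (since $\alpha_k(x)e^{\lambda x^2}$ stays bounded as $x\to -\infty$ for any $\lambda<1/k$). This immediately yields the first and third terms in the definition of $d_2$. For the $\max$-term I will use the continuous bumps $g_a:=h_{a-1,a}\in C_{d_2}^+(\R)$: for $a>\max\eta$ the function $g_a$ vanishes on the support of $\eta$, so $\<{g_a,\eta_n}\to 0$, and since the $\eta_n$ are integer-valued this forces $\eta_n([a,\infty))=0$ for large $n$, whence $\limsup\max\eta_n\le\max\eta$; taking $a$ slightly below $\max\eta$ and using $\<{g_a,\eta}\ge 1$ reverses the inequality.

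For ``$\Rightarrow$'', given $f\in C_{d_2}^+(\R)$ with $f(x)\le Ce^{-\lambda x^2}$ for all $x$ sufficiently negative, I choose $k<k'$ with $1/k<\lambda$, so that $f\le C'\alpha_k$ on some half-line $(-\infty,-N_0]$. The key pointwise bound
\[
\alpha_k(x)\le e^{-N^2(1/k-1/k')}\,\alpha_{k'}(x)\qquad\text{for all }x\le -N,\ N\ge 1,
\]
combined with the fact that $\{\<{\alpha_{k'},\eta_n}\}_n$ is bounded (because it converges by hypothesis), shows that $\int_{(-\infty,-N]}f\,d\eta_n\to 0$ as $N\to\infty$ uniformly in $n$, and similarly for $\eta$. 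Since $\max\eta_n\to\max\eta$, I fix some $M'$ with $\max\eta_n\le M'$ for all $n$ and pick a continuous cutoff $\chi$ with $\chi=1$ on $[-N,M']$ and $\chi=0$ off $[-N-1,M'+1]$, so that $f\chi\in C_0^+(\R)$; vague convergence then gives $\<{f\chi,\eta_n}\to\<{f\chi,\eta}$, and the decomposition $f=f\chi+f(1-\chi)$ together with the tail estimate finishes the argument after letting $N\to\infty$. The \textbf{main obstacle} is precisely this uniform tail control: a single $\alpha_k$ cannot both dominate $f$ near $-\infty$ \emph{and} yield a tail estimate that shrinks uniformly in $n$, because the sequence $\<{\alpha_k,\eta_n}$ merely converges and its contribution on $(-\infty,-N]$ need not be small uniformly in $n$; the ``two-$k$'' trick extracts the exponentially small Gaussian gap $e^{-N^2(1/k-1/k')}$ that is missing.
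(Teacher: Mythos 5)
Your proposal is essentially correct, and the two directions deserve slightly different verdicts.

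The ``$\Rightarrow$'' direction is sound and takes a genuinely more direct route than the paper. The paper introduces the auxiliary finite measures $\rho_n(dx) = (\alpha_k(x)+h_{-1,0}(x))\,\eta_n(dx)$, proves $\rho_n \overset{w}{\to}\rho$ via a Prokhorov tightness argument, and then reduces $\<{f,\eta_n}$ to $\<{f/(\alpha_k+h_{-1,0}),\rho_n}$ with $f/(\alpha_k+h_{-1,0})$ bounded continuous. You instead split $f=f\chi+f(1-\chi)$ with a compactly supported cutoff $\chi$, dispatch $f\chi$ by vague convergence, and control the tail by the explicit ``two-$k$'' comparison $\alpha_k\le e^{-N^2(1/k-1/k')}\alpha_{k'}$ on $(-\infty,-N]$ together with the convergence (hence boundedness) of $\<{\alpha_{k'},\eta_n}$. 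These are the same idea in different packaging: the paper's Lemma~\ref{simple-check}, which underlies its weak-convergence step, already uses the bound $\nu_n(\{x\le -M\})\le e^{-M^2/2k}\<{\alpha_{2k},\eta_n}$, i.e.\ exactly your gap with $k'=2k$. Your version is more self-contained and makes the mechanism visible; the paper's buys a cleaner reuse of the already-established Lemma~\ref{simple-check}. Both are fine. (One small thing: your statement ``a single $\alpha_k$ cannot both dominate $f$ near $-\infty$ and yield a tail estimate'' is overselling the obstacle slightly — dominating $f$ by $\alpha_k$ is fine; the point is simply that $\<{\alpha_k,\eta_n}\to\<{\alpha_k,\eta}$ does not by itself say the mass on $(-\infty,-N]$ is small uniformly in $n$, and the second $k'$ supplies that.)

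In the ``$\Leftarrow$'' direction the idea is correct but your specific bump $g_a:=h_{a-1,a}$ is off by a unit. Since $h_{a-1,a}$ is supported on $(a-1,\infty)$, it vanishes on $\mathrm{supp}\,\eta$ only when $a\ge\max\eta+1$ (not $a>\max\eta$ as written), so the argument only yields $\limsup_n\max\eta_n\le\max\eta+1$; similarly, the lower-bound step gives $\liminf_n\max\eta_n\ge\max\eta-1$. This does not prove $\max\eta_n\to\max\eta$ directly, but it does establish $\limsup_n\max\eta_n\neq\pm\infty$, which — together with $\eta_n\overset{v}{\to}\eta$ and $\sup_n\<{\alpha_k,\eta_n}<\infty$ — is exactly what Lemma~\ref{simple-check} needs, and that lemma then hands you $\max\eta_n\to\max\eta$ for free. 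Alternatively, you can fix your direct argument by using narrower bumps $h_{a,a+\epsilon}$ with $\epsilon\downarrow 0$, or simply by taking $h_{\max\eta,\max\eta+1}$ as the paper does for the upper bound and the vague-convergence open-set estimate for the lower bound. Either repair closes the gap; as written, the conclusion ``$\limsup\max\eta_n\le\max\eta$'' does not follow from what precedes it.
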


\begin{proof}

Suppose first that $\eta_n \overset{d_2}{\to} \eta$. For each fixed $k$, define $\rho_n(dx) = (\alpha_k(x)+ h_{-1,0}(x))\eta_n(dx)$ and $\rho(dx) = (\alpha_k(x)+ h_{-1,0}(x))\eta(dx)$, where $h_{-1,0}(x)$ is defined as in \eqref{hab}. We claim that  $\rho_n \overset{w}{\to} \rho $. Using the notations of the proof of Lemma \ref{simple-check}, note that $\rho_n(dx)  = \nu_n(dx) + h_{-1,0}(x)\eta_n(dx)$. Since $\eta_n \overset{d_2}{\to} \eta$, the conditions of Lemma \ref{simple-check} are satisfied by $\eta_n$. So, as proved in Lemma \ref{simple-check}, we have that $\nu_n \overset{w}{\to} \nu $. Also, since $\sup_n \max \eta_n <\infty$, one can easily check that total mass of $h_{-1,0}(x)\eta_n(dx)$ is bounded. Furthermore, one can also easily verify that for all $\epsilon >0$, there exists large enough $M$ such that 
\[ \sup_{n} \int_{|x|\geq M} h_{-1,0}(x)\eta_n(dx)\leq \epsilon.\]
Therefore, similarly as in the proof of Lemma \ref{simple-check}, we conclude that $h_{-1,0}(x)\eta_n(dx) \overset{w}{\to} h_{-1,0}(x)\eta(dx)$. So, $\rho_n \overset{w}{\to} \rho $. Now, if $f\in C_{d_2}^{+}(\R)$, we can find $k$ large enough such $f/(\alpha_k(x)+ h_{-1,0}(x))$ is a bounded continuous function. Therefore, 
\[\<{f,\eta_n} = \<{f/(\alpha_k(x)+ h_{-1,0}(x)), \rho_n} \to  \<{f/(\alpha_k(x)+ h_{-1,0}(x)), \rho}= \<{f,\eta}.\]

Conversely, now suppose $\<{f,\eta_n} \to \<{f,\eta}$ for all $f\in C_{d_2}^{+}(\R)$. Since $C_0^{+}(\R) \subset C_{d_2}^{+}(\R)$, and $\alpha_k(\cdot) \in C_{d_2}^{+}(\R)$, it clearly implies that $\eta_n \overset{v}{\to} \eta$ and $\<{\alpha_k,\eta_n} \to \<{\alpha_k,\eta}$. Therefore, using Lemma \ref{simple-check}, if suffices to check that $\limsup_{n \to \infty} \max \eta_n \neq \pm\infty$ to conclude $\eta_n \overset{d_2}{\to} \eta $. To this end, consider the open set $(\max \eta -\epsilon, \max \eta + \epsilon)$. Since $\eta_n \overset{v}{\to} \eta$, 
\[\liminf_{n\to \infty} \eta_n((\max \eta -\epsilon, \max \eta + \epsilon)) \geq \eta((\max \eta -\epsilon, \max \eta + \epsilon)) \geq 1.\]
So, for $n$ large enough, $\eta_n$ has at least one point in $(\max \eta -\epsilon, \max \eta + \epsilon)$. This means that $\max \eta_n \geq  \max \eta -\epsilon$ and so $\limsup_{n \to \infty} \max \eta_n \neq -\infty$. Next, consider $h_{\max \eta, \max \eta +1}(x) \in C_{d_2}^{+}(\R)$ defined by \eqref{hab}. Note that $\<{h_{\max \eta, \max \eta +1}, \eta} =0$. So, $\<{h_{\max \eta, \max \eta +1}, \eta_n} \to 0$. If $\limsup_{n \to \infty} \max \eta_n = +\infty$, $\<{h_{\max \eta, \max \eta +1}, \eta_n} \geq 1$ infinitely often. This is a contradiction. So, $\limsup_{n \to \infty} \max \eta_n < +\infty$ which completes the proof. 
\end{proof}

The above lemma implies that the natural topology generated by the convergence \eqref{f-in-d-2}   is same as the topology generated by the metric $d_2$. More precisely, the collection of sets of the form \[O_{f_1,f_2,..,f_n}(\eta, \epsilon) := \{\bar{\eta} \in \M_2 \textrm{ s.t. } |\<{f_k,\bar{\eta}} - \<{f_k,\eta}| <\epsilon \textrm{ for all } k=1,2,..,n\}\]
form a \textit{basis} of the topology generated by $d_2$, where $f_1,f_2,..,f_n$ varies over $C_{d_2}^{+}(\R)$ and $\epsilon >0$. \\

The space $(\M_2,d_2)$ is a complete separable metric space, i.e. a Polish space. To see the separability, consider open sets 
\[ \tilde{O}_{\max,a,b} := \{ \eta \in \M_2 \textrm{ s.t. } \max \eta \in (a,b)\},\]
\[ \tilde{O}_{f,a,b} := \{ \eta \in \M_2 \textrm{ s.t. } \<{\eta, f} \in (a,b)\},\]

where $f$ varies over $\{h_1,h_2,...\}\cup \{\alpha_1,\alpha_2,...\}$ (recall $h_k$, $\alpha_k$ from \eqref{definition-of-metric-d-2}) and $a,b$ varies over rationals. It can be easily seen that  
the collection of finite intersections of sets of the above type forms a \textit{basis} of the topology $(\M_2,d_2)$. This collection is clearly countable. So, $(\M_2, d_2)$ is separable. \\
To verify the completeness, let $\eta_n$ be a Cauchy sequence in $(\M_2,d_2)$. It then implies that $\eta_n \in \M$ is a Cauchy sequence with respect to the vague metric $d$. Since $(\M,d)$ is a complete metric space, $\eta_n \overset{v}{\to} \eta$ for some $\eta \in \M$. Also note that since $\eta_n$ is Cauchy in metric $d_2$, for each fixed $k \geq 1$, $\<{\alpha_k,\eta_n}$ is bounded in $n$. Also, $\max \eta_n$ is bounded in $n$. So, using Lemma \ref{simple-check}, $\eta \in \M_2$ and $\eta_n \overset{d_2}{\to} \eta$. \\ 

We next characterise relatively compact subsets of $\M_2$. 

\begin{lemma}\label{comp-des}
A subset $K\subset \M_2$ is relatively compact in $\M_2$ if and only if 
\begin{equation}\label{comp-cond}
\sup_{\eta \in K}\<{f,\eta} <\infty \textrm{ for all } f\in C_{d_2}^{+}(\R),
\end{equation}
and 
\begin{equation}\label{comp-max}
\sup_{\eta \in K} \vert\max \eta \vert<\infty.
\end{equation}
\end{lemma}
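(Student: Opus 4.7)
The plan is to prove both directions using the intrinsic description of $d_2$-convergence given by Lemma \ref{alt-des}, together with the standard vague compactness criterion on $\M$ and Lemma \ref{simple-check}.

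The "only if" direction is the easy half. Suppose $K$ is relatively compact, so that $\overline{K}$ is compact in $(\M_2, d_2)$. By Lemma \ref{alt-des} the map $\eta \mapsto \<{f, \eta}$ is continuous on $\M_2$ for every $f \in C_{d_2}^+(\R)$, and the map $\eta \mapsto \max \eta$ is $1$-Lipschitz with respect to $d_2$ (since $d_2(\eta_1, \eta_2) \geq |\max \eta_1 - \max \eta_2|$ by \eqref{definition-of-metric-d-2}). A continuous real-valued function attains its bounds on the compact set $\overline{K}$, which yields both \eqref{comp-cond} and \eqref{comp-max}.

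For the "if" direction, assume \eqref{comp-cond} and \eqref{comp-max} hold and pick any sequence $\{\eta_n\} \subset K$. Since $C_0^+(\R) \subset C_{d_2}^+(\R)$, for every compact $A \subset \R$ one can choose $f \in C_0^+(\R)$ with $f \geq \mathbf{1}_A$, whence $\sup_{\eta \in K} \eta(A) \leq \sup_{\eta \in K} \<{f, \eta} < \infty$. This is the classical criterion for vague relative compactness on $\M$ (see e.g. [Chapter 2, \cite{bovier-book}]), so after passing to a subsequence one has $\eta_{n_j} \overset{v}{\to} \eta$ for some $\eta \in \M$. To upgrade this to $d_2$-convergence I would invoke Lemma \ref{simple-check}: for every $k \geq 1$ the function $\alpha_k$ belongs to $C_{d_2}^+(\R)$ (continuity is immediate at the junctions $x=-1,0$, and $\alpha_k(x) e^{x^2/(2k)} \to 0$ as $x \to -\infty$, so \eqref{for-some-lambda} holds with $\lambda = 1/(2k)$), hence \eqref{comp-cond} gives $\sup_j \<{\alpha_k, \eta_{n_j}} < \infty$; meanwhile \eqref{comp-max} provides a uniform bound on $\max \eta_{n_j}$. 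Lemma \ref{simple-check} then delivers $\eta \in \M_2$ together with $\eta_{n_j} \overset{d_2}{\to} \eta$, yielding sequential relative compactness of $K$, which in the metric space $(\M_2, d_2)$ is the same as relative compactness. The only mildly subtle point, rather than a genuine obstacle, is the verification that the test functions $\alpha_k$ lie in $C_{d_2}^+(\R)$, since this is precisely what legitimizes using \eqref{comp-cond} to extract the uniform control on the negative tails required by Lemma \ref{simple-check}.
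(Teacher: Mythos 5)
Your proof is correct and follows essentially the same route as the paper: continuity of $\eta\mapsto\<{f,\eta}$ and $\eta\mapsto\max\eta$ for the "only if" half, and extraction of a vaguely convergent subsequence via $C_0^+(\R)\subset C_{d_2}^+(\R)$ followed by an application of Lemma \ref{simple-check} for the "if" half. The extra detail you add (Lipschitz continuity of $\max$, explicit verification that $\alpha_k\in C_{d_2}^+(\R)$ with $\lambda=1/(2k)$) is sound but not a genuine departure from the paper's argument.
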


\begin{proof}
Note that functions $\eta \mapsto \<{f,\eta}$ and $\eta \mapsto |\max \eta|$ are continuous functions on $\M_2$ for all $f\in C_{d_2}^{+}(\R)$. So, if $K$ is relatively compact, since continuous functions on compact sets are bounded, \eqref{comp-cond} and \eqref{comp-max} clearly hold. \\
Conversely, let \eqref{comp-cond} and \eqref{comp-max} hold. We then prove that $K$ is sequentially relatively compact. Let $\eta_n$ be any sequence $K$. Again, since $C_0^+(\R) \subset C_{d_2}^{+}(\R)$, viewing $\eta_n$ as a sequence in $(\M,d)$ and using [Proposition 2.17,\cite{bovier-book}], we obtain a vaguely convergent subsequence $\eta_{n_k} \overset{v}{\to} \eta$. Also, \eqref{comp-cond} and \eqref{comp-max} implies that conditions of Lemma \ref{simple-check} are satisfied for $\eta_{n_k}$. Therefore, Lemma \ref{simple-check} implies that $\eta_{n_k} \overset{d_2}{\to} \eta$ which completes the proof. 
\end{proof}
\vspace{4mm}

We now turn our attention to $\M_2$-valued point processes. A sequence $\theta_n$ of $\M_2$-valued point process is called \textit{tight} if for all $\epsilon >0$, there exists a compact set $K_{\epsilon} \subset \M_2$ such that \[\sup_{n} \P(\theta_n \notin K_{\epsilon}) \leq \epsilon.\] 
We have the following characterisation of tight sequences. 

\begin{lemma}\label{tight=real-tight}
A sequence $\theta_n$ of $\M_2$-valued point processes is tight if and only if 
\begin{equation}\label{random-comp-max}
\max \theta_n \textrm{ is tight},
\end{equation}
and for all $f\in C_{d_2}^{+}(\R)$
\begin{equation}\label{random-comp-cond}
\<{f,\theta_n}\textrm{ is tight }\textrm{(as a sequences of real-valued random variables)}.
\end{equation}
\end{lemma}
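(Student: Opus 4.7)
The direct implication is almost immediate: by Lemma \ref{alt-des} (or directly from the definition \eqref{definition-of-metric-d-2}) the map $\eta \mapsto \<{f,\eta}$ is continuous on $\M_2$ for every $f \in C_{d_2}^+(\R)$, and $\eta \mapsto \max \eta$ is even $1$-Lipschitz in $d_2$. If $\theta_n$ is tight in $\M_2$ and $K_\epsilon \subset \M_2$ is compact with $\P(\theta_n \in K_\epsilon) \geq 1-\epsilon$, then the continuous image of $K_\epsilon$ under each such functional is bounded in $\R$. This immediately yields tightness of $\max \theta_n$ and of $\<{f,\theta_n}$ for every $f \in C_{d_2}^+(\R)$.

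For the converse, the plan is to construct, for each $\epsilon>0$, a concrete compact set $K_\epsilon \subset \M_2$ that traps $\theta_n$ with probability at least $1-\epsilon$. By~\eqref{random-comp-max}, pick $M>0$ with $\P(|\max \theta_n| > M) \leq \epsilon/2$ uniformly in $n$. Each function $\alpha_k$ and $h_k$ appearing in~\eqref{definition-of-metric-d-2} lies in $C_{d_2}^+(\R)$ (the $h_k$ are compactly supported, and $\alpha_k(x)=e^{-x^2/k}$ for $x\leq-1$ clearly satisfies~\eqref{for-some-lambda}), so by~\eqref{random-comp-cond} we can pick for each $k\geq 1$ finite constants $A_k, B_k$ with
\[
\P(\<{\alpha_k,\theta_n} > A_k) + \P(\<{h_k,\theta_n} > B_k) \leq \epsilon \cdot 2^{-k-1}
\]
for every $n$. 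Set
\[
K_\epsilon := \bigl\{\eta \in \M_2 : |\max \eta| \leq M,\ \<{\alpha_k,\eta} \leq A_k,\ \<{h_k,\eta} \leq B_k \ \forall k \geq 1 \bigr\},
\]
so that a union bound gives $\P(\theta_n \notin K_\epsilon) \leq \epsilon$ for every $n$.

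It remains to verify that $K_\epsilon$ is compact in $\M_2$. For any sequence $\eta_m \in K_\epsilon$, the uniform bounds $\sup_m \<{h_k,\eta_m} \leq B_k$ for all $k$ force vague precompactness in $\M$ (by the standard criterion, once we recall that the $h_k$ chosen to metrise the vague topology can be arranged so that every compact set is dominated by some $h_k$). Hence a subsequence $\eta_{m_j}$ converges vaguely to some $\eta \in \M$. Combined with $\sup_m \<{\alpha_k,\eta_m} \leq A_k$ and the two-sided bound $|\max \eta_m| \leq M$ (so that $\limsup_m \max \eta_m \in [-M,M]$ is neither $+\infty$ nor $-\infty$), Lemma~\ref{simple-check} applies and upgrades the vague convergence to $\eta_{m_j} \overset{d_2}{\to} \eta$ with $\eta \in \M_2$. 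Each defining inequality of $K_\epsilon$ passes to the $d_2$-limit by continuity of the relevant functionals, so $\eta \in K_\epsilon$, and $K_\epsilon$ is sequentially compact, i.e.\ compact.

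The main (mild) subtlety is the vague-precompactness step: it relies on the specific choice of $\{h_k\}$ in the definition of the vague metric being rich enough that bounds on $\<{h_k,\cdot}$ translate into uniform local boundedness of the measures. Once this standard point is granted, the argument is a routine tightness/union-bound construction feeding into the already-established Lemma~\ref{simple-check}.
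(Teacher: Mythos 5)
Your proof is correct and takes essentially the paper's approach: construct a candidate compact $K_\epsilon$ from uniform bounds on $|\max\cdot|$ and on a countable family of test integrals, apply a union bound, and verify compactness of $K_\epsilon$ via the earlier lemmas. The only substantive difference is in the choice of test functions and the compactness lemma you invoke. You control $\<{\alpha_k,\cdot}$ and $\<{h_k,\cdot}$ separately and then feed the resulting vague precompactness plus $|\max\eta_m|\leq M$ into Lemma~\ref{simple-check}; the paper instead uses the combined functions $\beta_k := \alpha_k + h_{-1,0}$ and invokes Lemma~\ref{comp-des} directly. The paper's choice is slightly slicker: since every $f\in C_{d_2}^+(\R)$ satisfies $f/\beta_k \in C_b(\R)$ for some $k$, the bounds $\<{\beta_k,\eta}\leq M_k$ immediately yield $\sup_{\eta\in K_\epsilon}\<{f,\eta}<\infty$ for \emph{all} $f\in C_{d_2}^+(\R)$, which is exactly the hypothesis of Lemma~\ref{comp-des} — no separate argument about the richness of $\{h_k\}$ is needed. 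Your version works too, but the claim that ``every compact set is dominated by some $h_k$'' is not quite right as stated (it should be ``by some finite nonnegative linear combination of the $h_k$'s,'' which does follow from the fact that $\{h_k\}$ metrises the vague topology); tightening this, or simply replacing it by a citation of Lemma~\ref{comp-des}, would make the step watertight.
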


\begin{proof}
If $\theta_n$ is tight, 
\[ \P(|\max \theta_n| > \sup_{\eta \in K_\epsilon} |\max \eta|) \leq \P(\theta_n \notin K_{\epsilon}) \leq \epsilon.\]
So, $\max \theta_n$ is tight. Similarly, $\<{f,\theta_n}$ is also tight for all $f\in C_{d_2}^{+}(\R)$.\\
Conversely, now suppose $\max \theta_n$ and $\<{f,\theta_n}$ are tight for all $f\in C_{d_2}^{+}(\R)$. Recall $h_{-1,0}$ from \eqref{hab} and consider $\beta_k(x) = \alpha_k(x) + h_{-1,0}(x) \in C_{d_2}^{+}(\R)$. For each $k$, there exists $M_k$ such that 
\[ \P(\<{\beta_k, \theta_n} > M_k) \leq \frac{\epsilon}{2^{k+1}}.\]
Also, for some $M_0$ large enough,  
\[ \P(|\max \theta_n| > M_0) \leq \frac{\epsilon}{2}.\]
Then, consider the set 
\[K_{\epsilon} := \{ \eta \in \M_2 \bigl | |\max \eta| \leq M_0, \<{\beta_k,\eta} \leq M_k \textrm{ for all } k\geq 1\}.\]
Note that for any $f\in C_{d_2}^{+}(\R)$, there exists $k$ such that $f/\beta_k$ is a bounded continuous function. Therefore, using Lemma \ref{comp-des}, $K_{\epsilon}$ is a compact set. Now, using union bound, 
\[ \P(\theta_n \notin K_\epsilon) \leq \frac{\epsilon}{2} + \frac{\epsilon}{4} + \frac{\epsilon}{8}+ \cdots = \epsilon. \]
Therefore, $\theta_n$ is tight. 
\end{proof}
\vspace{4mm}

Let $\calB_2$ be the Borel sigma algebra on $\M_2$. The law of a $\M_2$-valued point process $\theta$ is a probability measure  $\mu_{\theta}$ on $(\M_2,\calB_2)$ defined by $\mu_\theta(A) = \P(\theta \in A)$, $A\in \calB_2$. A sequence $\theta_n$ of $\M_2$-valued point processes is said to converge in distribution to $\theta$, denoted by $\theta_n \overset{\calL_2}{\to} \theta$, if $\mu_n = \mu_{\theta_n}$ converges weakly to $\mu_{\theta}$ as probability measures on $(\M_2,\calB_2)$. We next derive a simple criterion to check this convergence in distribution. Let us first note the following lemma which allows us to check the equality in law. 

\begin{lemma}\label{equal=real-equal}
Let $\theta,\tilde{\theta}$ are two $\M_2$-valued point processes. Suppose 
\begin{equation}\label{marginally-equal}
\<{f,\theta} \overset{\calL}{=} \<{f,\tilde{\theta}} \textrm{ for all } f\in C_{d_2}^{+}(\R).
\end{equation} 
Then,  $\theta \overset{\calL_2}{=} \tilde{\theta}$, i.e. $\mu_{\theta} = \mu_{\tilde{\theta}}$.
\end{lemma}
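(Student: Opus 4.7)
The plan is to lift the one-dimensional equalities in law~\eqref{marginally-equal} first to the joint law of the vectors $(\<{f_1,\theta},\ldots,\<{f_n,\theta})$, and then to the full law on $(\M_2,\calB_2)$ via a $\pi$-$\lambda$ argument; the topological description of $\M_2$ provided by Lemma \ref{alt-des} will serve as the bridge between these two steps.

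For the joint step, I would first observe that $C_{d_2}^+(\R)$ is a cone: if $f_1,\ldots,f_n\in C_{d_2}^+(\R)$ and $\lambda_1,\ldots,\lambda_n\ge 0$, then $\sum_i \lambda_i f_i$ is again in $C_{d_2}^+(\R)$, since bounded continuity is obviously preserved and the Gaussian-type control~\eqref{for-some-lambda} holds with $\lambda:=\min_i \lambda^{(i)}>0$. Applying the hypothesis~\eqref{marginally-equal} to this sum yields
\[
\E\Bigl[\exp\Bigl(-\textstyle\sum_i\lambda_i \<{f_i,\theta}\Bigr)\Bigr] = \E\Bigl[\exp\Bigl(-\textstyle\sum_i\lambda_i \<{f_i,\tilde\theta}\Bigr)\Bigr]
\]
for every $\lambda_1,\ldots,\lambda_n\ge 0$. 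As these are the Laplace transforms on $\R_+^n$ of two $\R_+^n$-valued random vectors, their equality on all of $\R_+^n$ gives the joint equality in law
\[
(\<{f_1,\theta},\ldots,\<{f_n,\theta})\overset{\calL}{=}(\<{f_1,\tilde\theta},\ldots,\<{f_n,\tilde\theta}).
\]

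For the transfer to $\calB_2$, let $\Sigma$ denote the sigma algebra on $\M_2$ generated by the evaluations $\eta\mapsto\<{f,\eta}$, $f\in C_{d_2}^+(\R)$. Each such evaluation is $d_2$-continuous by Lemma \ref{alt-des}, so $\Sigma\subseteq\calB_2$. For the reverse inclusion I would invoke the observation recorded right after Lemma \ref{alt-des}, namely that the sets
\[
O_{f_1,\ldots,f_n}(\eta,\epsilon) = \bigcap_{i=1}^n\{\eta'\in\M_2 : |\<{f_i,\eta'}-\<{f_i,\eta}|<\epsilon\}
\]
form a base of the $d_2$-topology. Each such basis element lies in $\Sigma$, and since $(\M_2,d_2)$ is Polish and hence second countable, every $d_2$-open set is a countable union of these basis elements and therefore belongs to $\Sigma$. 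Thus $\calB_2=\Sigma$.

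The conclusion then follows from a standard $\pi$-$\lambda$ argument: the collection $\calC := \{\{\eta\in\M_2 : (\<{f_1,\eta},\ldots,\<{f_n,\eta})\in B\} : n\ge 1,\ f_i\in C_{d_2}^+(\R),\ B\in\mathcal{B}(\R^n)\}$ is a $\pi$-system generating $\Sigma=\calB_2$, and the joint-law step above shows that $\mu_\theta$ and $\mu_{\tilde\theta}$ agree on $\calC$. Dynkin's $\pi$-$\lambda$ theorem then forces $\mu_\theta=\mu_{\tilde\theta}$ on all of $\calB_2$. The only mildly non-routine point in this plan is the identification $\calB_2=\Sigma$, which relies crucially on the topological characterisation of $\M_2$ provided by Lemma \ref{alt-des} together with separability of $(\M_2,d_2)$.
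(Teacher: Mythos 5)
Your proof is correct and follows the same broad strategy as the paper's (lift the one-dimensional Laplace identities to joint laws, then pass from joint laws to equality of measures on $\calB_2$ via a generating class), but you diverge in how the coordinate $\max\theta$ is handled — and that is the only nontrivial point, since $\max$ enters the definition of $d_2$ but is not itself of the form $\eta\mapsto\<{f,\eta}$. The paper addresses it head on: after obtaining the joint law of $(\<{f_1,\theta},\ldots,\<{f_k,\theta})$ by the Laplace-transform argument, it uses the identity $\max\theta = \inf\{a : \<{h_{a,a+1},\theta}=0\}$ to append $\max\theta$ to the vector and deduce the augmented joint equality \eqref{equal-max-f-joint}, and then matches the two laws on the explicit countable basis $\{\cap_i\tilde O_{f_i,a_i,b_i}\cap\tilde O_{\max,a_0,b_0}\}$ from the separability discussion. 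You instead sidestep $\max$ entirely by invoking the assertion just after Lemma \ref{alt-des} — that the sets $O_{f_1,\ldots,f_n}(\eta,\epsilon)$, with no explicit $\max$-coordinate, already form a base of the $d_2$-topology — together with second countability (Lindel\"of) and Dynkin's $\pi$-$\lambda$ theorem. This is a legitimate streamlining and makes the measure-theoretic skeleton of the argument more transparent. The price is that you lean on the $O_{f_1,\ldots,f_n}$-basis claim, which the paper states as a consequence of Lemma \ref{alt-des} but does not verify in detail; unwinding that claim is in effect the observation that $\max$ is determined by the functionals $\<{h_{a,a+1},\cdot}$, which is precisely the step the paper makes explicit. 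In the end both proofs close with a $\pi$-$\lambda$-type step (yours spelled out, the paper's implicit in passing from agreement on a countable $\cap$-stable basis to agreement on all open sets), so the two arguments are mathematically equivalent, just organised differently.
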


\begin{proof}
Note that for any $f_1,f_2,..f_k \in C_{d_2}^{+}(\R)$ and $\lambda_1,\lambda_2,..,\lambda_k >0$, $\lambda_1 f_1 + \lambda_2 f_2+ .. + \lambda_k f_k \in C_{d_2}^+(\R)$. Therefore, using the Laplace transform, the condition \eqref{marginally-equal} implies 
\[(\<{f_1,\theta},\<{f_2,\theta},..,\<{f_k,\theta}) \overset{\calL}{=} (\<{f_1,\tilde{\theta}},\<{f_2,\tilde{\theta}},..,\<{f_k,\tilde{\theta}}).\]

We next claim that if \eqref{marginally-equal} holds, then we in fact also have the equality of the joint distributions 
\begin{equation}\label{equal-max-f-joint}
(\max \theta, \<{f_1,\theta},\<{f_2,\theta},..,\<{f_k,\theta}) \overset{\calL}{=} (\max \tilde{\theta},\<{f_1,\tilde{\theta}},\<{f_2,\tilde{\theta}},..,\<{f_k,\tilde{\theta}}).
\end{equation}

Using the same argument as above, it is sufficient to verify 
\begin{equation}\label{only-one-f}
(\max \theta, \<{f,\theta}) \overset{\calL}{=} (\max \tilde{\theta},\<{f,\tilde{\theta}}).
\end{equation}
To this end, note that $\max\theta = \inf\{a \bigl | \<{h_{a,a+1},\theta} =0\}$, where $h_{a,a+1}$ is defined as in \eqref{hab}. Therefore, 
\[ \P(\max \theta \in (a,\infty), \<{f,\theta} \in A) = \P(\<{h_{a,a+1},\theta} \in (0,\infty), \<{f,\theta} \in A). \]
Since $(\<{h_{a,a+1},\theta},\<{f,\theta}) \overset{\calL}{=} (\<{h_{a,a+1},\tilde\theta},\<{f,\tilde\theta})$, the claim \eqref{only-one-f} follows.\\
Now, using \eqref{equal-max-f-joint}, it follows that  

\[\mu_{\theta} (\cap_{i=1}^k \tilde{O}_{f_i,a_i,b_i}\cap \tilde{O}_{\max,a_0,b_0} ) = \mu_{\tilde{\theta}} (\cap_{i=1}^k \tilde{O}_{f_i,a_i,b_i}\cap \tilde{O}_{\max,a_0,b_0}).\] 
Since sets of the form $\cap_{i=1}^k \tilde{O}_{f_i,a_i,b_i}\cap \tilde{O}_{\max,a_0,b_0}$ forms a countable basis of the topology on $\M_2$, it implies that $\mu_{\theta}(O) = \mu_{\tilde{\theta}}(O)$ for all open sets $O$. This implies $\mu_{\theta} = \mu_{\tilde{\theta}}$.
\end{proof}
\vspace{3mm}

We now have the following characterisation of the convergence in distribution.

\begin{lemma}\label{conv=real-conv}
Let $\theta_n, \theta$ be $\M_2$-valued point processes. Then, $\theta_n \overset{\calL_2}{\to} \theta$ if and only if 
\begin{equation}\label{only-f-conv-req}
\<{f,\theta_n} \overset{\calL}{\to} \<{f,\theta} \textrm{ for all } f\in C_{d_2}^+(\R).
\end{equation}
\end{lemma}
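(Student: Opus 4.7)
The proof will follow the standard two-step structure for weak convergence on a Polish space.

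For the direct implication, suppose $\theta_n \overset{\calL_2}{\to} \theta$. By Lemma \ref{alt-des}, every functional $\eta \mapsto \<{f,\eta}$ with $f \in C_{d_2}^+(\R)$ is continuous on $\M_2$. The continuous mapping theorem immediately yields $\<{f,\theta_n} \overset{\calL}{\to} \<{f,\theta}$.

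For the converse, assume $\<{f,\theta_n} \overset{\calL}{\to} \<{f,\theta}$ for every $f \in C_{d_2}^+(\R)$. The plan is to first establish tightness of the sequence $(\theta_n)$ in $\M_2$ and then identify every subsequential limit with $\theta$. Tightness reduces by Lemma \ref{tight=real-tight} to showing (i) $\<{f,\theta_n}$ is tight for each $f \in C_{d_2}^+(\R)$, which is automatic since convergence in distribution entails tightness, and (ii) $\max \theta_n$ is tight. Condition (ii) will be handled using the bump functions $h_{a,b}$ of \eqref{hab} together with the Portmanteau theorem. On the upper side, $\{\max \theta_n \geq M+1\} \subset \{\<{h_{M,M+1},\theta_n} \geq 1\}$, so the Portmanteau inequality applied to the closed set $[1,\infty)$ gives
$$\limsup_n \P(\max \theta_n \geq M+1) \leq \P(\<{h_{M,M+1},\theta} \geq 1),$$
and the right-hand side tends to $0$ as $M\to\infty$ by dominated convergence (since $\theta([0,\infty))<\infty$ a.s.). On the lower side, $\{\max \theta_n > -N-1\} = \{\<{h_{-N-1,-N},\theta_n} > 0\}$, and Portmanteau for the open set $(0,\infty)$ yields
$$\liminf_n \P(\max \theta_n > -N-1) \geq \P(\max \theta > -N-1),$$
whose right-hand side tends to $1$ as $N\to\infty$ because $\theta\neq 0$ and $\max \theta<\infty$ a.s. Together these bounds give tightness of $\max \theta_n$.

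With tightness established, Prokhorov's theorem (applicable since $\M_2$ is Polish, as verified earlier in this section) guarantees that every subsequence of $(\theta_n)$ admits a further subsequence $(\theta_{n_k})$ converging in law in $\M_2$ to some limit $\theta^\star$. By the direct implication already proved, $\<{f,\theta_{n_k}} \overset{\calL}{\to} \<{f,\theta^\star}$ for every $f \in C_{d_2}^+(\R)$, while by hypothesis the same sequence also converges to $\<{f,\theta}$. Hence $\<{f,\theta^\star} \overset{\calL}{=} \<{f,\theta}$ for every admissible $f$, and Lemma \ref{equal=real-equal} identifies $\theta^\star \overset{\calL_2}{=} \theta$. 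Since every subsequential limit equals $\theta$ in law, the whole sequence converges, i.e. $\theta_n \overset{\calL_2}{\to} \theta$.

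I expect the main subtle point to be the lower tightness of $\max \theta_n$: unlike the upper tail (closed set Portmanteau plus decay of the limit's atoms), the lower tail requires the open-set form of Portmanteau applied to $\{\<{h_{-N-1,-N},\theta_n}>0\}$, using crucially that $\theta\in\M_2$ is nonzero a.s. to ensure $\P(\max \theta > -N-1)\to 1$. Everything else reduces to an application of the continuous mapping theorem, Prokhorov, and the equal-in-law Lemma \ref{equal=real-equal}.
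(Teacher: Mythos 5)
Your proof is correct and follows essentially the same approach as the paper: tightness via Lemma \ref{tight=real-tight} using the bump functions $h_{a,b}$ for the upper and lower tails of $\max\theta_n$, then Prokhorov and identification of subsequential limits via Lemma \ref{equal=real-equal}. The only cosmetic difference is in the direct implication, where you invoke the continuous mapping theorem (via the continuity of $\eta\mapsto\<{f,\eta}$ established in Lemma \ref{alt-des}) whereas the paper argues through convergence of Laplace transforms $\E[e^{-\lambda\<{f,\theta_n}}]$; both are valid routes to the same conclusion.
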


\begin{proof}
For any $f\in C_{d_2}^+(\R)$ and $\lambda>0$, the function $\eta \mapsto e^{-\lambda \<{f,\eta}}$ is a bounded continuous function on $\M_2$. If $\theta_n \overset{\calL_2}{\to} \theta$, 
\begin{align*}
\E\bigl(e^{-\lambda \<{f,\theta_n}}\bigr) & = \int_{\M_2} e^{-\lambda \<{f,\eta}}\mu_n(d\eta) \\
& \overset{n\to \infty}{\longrightarrow} \int_{\M_2} e^{-\lambda \<{f,\eta}}\mu(d\eta) = \E\bigl(e^{-\lambda \<{f,\theta}}\bigr). 
\end{align*}
Using the convergence of Laplace transform, this implies $\<{f,\theta_n} \overset{\calL}{\to} \<{f,\theta}$.\\
Conversely, assume \eqref{only-f-conv-req} holds. We then claim that $\theta_n$ is tight. Using Lemma \ref{tight=real-tight}, it suffices to check that $\max \theta_n$ is tight. To this end, note that 
\begin{align*}
\P(\max \theta_n > M+1) \leq \P(\<{h_{M, M+1}, \theta_n} \geq 1).
\end{align*}
Since $\<{h_{M, M+1}, \theta_n} \overset{\calL}{\to} \<{h_{M, M+1}, \theta}$, 
\[ \limsup_{n\to \infty} \P(\<{h_{M, M+1}, \theta_n} \geq 1) \leq \P(\<{h_{M, M+1}, \theta} \geq 1) \leq \P(\<{h_{M, M+1}, \theta} > 0) \leq \P(\max \theta >M).\]
Therefore, 
\[ \lim_{M\to \infty} \limsup_{n\to \infty}  \P(\max \theta_n > M+1) =0.\]
Similarly, 
\[ \limsup_{n\to \infty} \P(\max \theta_n \leq -M) \leq  \P(\max \theta \leq -M),\]
which implies
\[ \lim_{M\to \infty} \limsup_{n\to \infty}  \P(\max \theta_n \leq -M) =0.\]
Therefore, $\max \theta_n$ is tight. \\
Now, since $\theta_n$ is tight, it has a subsequence $\theta_{n_k}$ converging in distribution to some $\M_2$-valued point process $\tilde{\theta}$. This implies that $\<{f,\theta_{n_k}} \overset{\calL}{\to} \<{f,\tilde{\theta}} $, which in turn implies $\<{f,\theta}\overset{\calL}{=} \<{f,\tilde{\theta}} $. Using Lemma \ref{equal=real-equal}, it follows that $\theta \overset{\calL_2}{=} \tilde\theta$. Since every convergent subsequence of $\theta_n$ is converging to the same limit $\theta$, we conclude that $\theta_n \overset{\calL_2}{\to} \theta$.

\end{proof}

We are now ready to prove Theorem \ref{main-thm-feller}. We first verify that the process $\theta_t$ defined by \eqref{markov-def} is well defined on $\M_2$.

\begin{lemma}\label{M-preserved}
If $\theta_0 \in \M_2$ a.s., then for all $t>0$, $\theta_t \in \M_2$ a.s..
\end{lemma}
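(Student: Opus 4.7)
The plan is to condition on a realisation $\theta_0 = \eta = \sum_{i\in I} \delta_{x_i} \in \M_2$ and verify the three defining conditions of $\M_2$ for $\theta_t$: (a) $\theta_t \neq 0$, (b) $\theta_t([0,\infty)) < \infty$ a.s., and (c) $\int_{-\infty}^0 e^{-\lambda x^2} \theta_t(dx) < \infty$ a.s. for every $\lambda > 0$ (the equivalent form \eqref{finite-integral} of the tail condition). Property (a) is immediate since $\eta \neq 0$ implies $\theta_t$ has at least one particle. Conditions (b) and (c) will both follow by showing that the relevant expectations are finite, using the many-to-one lemma and a direct Gaussian computation, combined with the fact that $\eta \in \M_2$.

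For (b), write $\theta_t([0,\infty)) = \sum_{i \in I} N^i_t$, where $N^i_t := \#\{k \le n^i(t) : x_i + \chi^i_k(t) - \sqrt{2}t \ge 0\}$. Only finitely many $x_i$ lie in $[0,\infty)$ since $\eta \in \M_2$, and each such index contributes $\E[N^i_t] \le e^t < \infty$. For $x_i < 0$, many-to-one gives
\begin{equation*}
\E[N^i_t] = e^t\, \P\bigl(B_t \ge \sqrt{2}t - x_i\bigr) \lesssim_t e^{\sqrt{2} x_i - x_i^2/(2t)} \le C_{t,\lambda}\, e^{-\lambda x_i^2}
\end{equation*}
for any $\lambda \in (0, 1/(2t))$ (absorbing $e^{\sqrt{2}x_i} \le 1$ into the constant, and the quadratic term into $e^{-\lambda x_i^2}$). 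Summing, $\E[\theta_t([0,\infty))] \lesssim \eta([0,\infty)) \cdot e^t + C_{t,\lambda}\int_{-\infty}^0 e^{-\lambda x^2}\eta(dx)$, which is finite by \eqref{finite-integral}.

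For (c), fix $\lambda > 0$ and apply many-to-one to each initial particle $x_i$ with the test function $y \mapsto e^{-\lambda y^2}$ (dropping the indicator $1_{y<0}$ for an upper bound):
\begin{equation*}
\E\BB{\sum_{k=1}^{n^i(t)} e^{-\lambda(x_i + \chi^i_k(t) - \sqrt{2}t)^2}}{} = e^t\, \E\bigl[e^{-\lambda(x_i + B_t - \sqrt{2}t)^2}\bigr].
\end{equation*}
A routine Gaussian completion of the square yields
\begin{equation*}
e^t\, \E\bigl[e^{-\lambda(x_i + B_t - \sqrt{2}t)^2}\bigr] = \frac{e^t}{\sqrt{1 + 2\lambda t}}\, \exp\!\left(-\frac{\lambda (x_i - \sqrt{2}t)^2}{1+2\lambda t}\right).
\end{equation*}
Setting $\lambda' := \lambda/(1+2\lambda t) > 0$, this bound is $\lesssim_{t,\lambda} e^{-\lambda'' x_i^2}$ for any $\lambda'' < \lambda'$, uniformly in $x_i$ (the cross term $e^{2\sqrt{2}t \lambda' x_i}$ is absorbed into the quadratic when $x_i \to -\infty$, and bounded on compact sets). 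Summing over $i \in I$ and using \eqref{finite-integral} with parameter $\lambda''$ for $\eta$, we conclude that the conditional expectation $\E[\int_{-\infty}^0 e^{-\lambda x^2}\theta_t(dx) \mid \theta_0 = \eta]$ is finite, hence the integral itself is a.s.\ finite. Since $\lambda > 0$ is arbitrary, condition (c) holds almost surely, completing the proof. The main care needed is simply in the Gaussian estimate and in ensuring that the final exponent $\lambda''$ is strictly positive, so that the $\M_2$ tail condition for $\eta$ can be invoked.
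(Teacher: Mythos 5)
Your proof is correct and follows essentially the same route as the paper's: condition on a deterministic $\eta \in \M_2$, apply the Many-to-One lemma, and control a Gaussian expectation to bound both $\theta_t([0,\infty))$ and the left tail, invoking \eqref{finite-integral} for $\eta$. The only small variation is in verifying the tail condition: you directly show that the conditional expectation of the integral in \eqref{finite-integral} is finite (so it is a.s.\ finite for each $\lambda$, hence for all rational $\lambda>0$ simultaneously by monotonicity), whereas the paper establishes \eqref{non-explosive} via Markov's inequality and Borel--Cantelli on the unit slices $[-m,-m+1]$ --- a slightly more roundabout but equivalent packaging of the same Gaussian estimate.
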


\begin{proof}
Without loss of generality, assume that $\theta_0 = \eta = \sum_{i\in I} \delta_{x_i}$ for some deterministic point measure $\eta \in \M_2$. Note that for an integer $m \geq 1$, \[\theta_t ([-m,-m+1]) = \sum_{i\in I} \sum_{k\leq n^i(t)} 1_{[-m,-m+1]}(x_i + \chi^i_k(t) -\sqrt{2}t).\]
Using the \textit{Many-to-One} Lemma (ref. Theorem 1.1 of \cite{shi-book}),
 \[ \E(\theta_t ([-m,-m+1])) = \sum_{i\in I} e^t \P(x_i + B_t- \sqrt{2}t \in [-m,-m+1]).\]
 
 Using the Gaussian density of $B_t$, one can easily estimate 
 \[e^t\P(x_i + B_t- \sqrt{2}t \in [-m,-m+1]) \lesssim_{t} \sup_{ y \in [\sqrt{2}t - x_i, 1+ \sqrt{2}t -x_i]}e^{-\frac{(y-m)^2}{2t}}.\]
Therefore, for any $\lambda >0$, 
\begin{align*}
\P(\theta_t ([-m,-m+1]) > e^{\lambda m^2}) &\leq  e^{-\lambda m^2} \E(\theta_t ([-m,-m+1])) \\
&\lesssim_{t}  \sum_{i\in I} e^{-\lambda m^2} \sup_{ y \in [\sqrt{2}t - x_i, 1+ \sqrt{2}t -x_i]}e^{-\frac{(y-m)^2}{2t}}.
\end{align*}
Since $\theta_0 \in \M_2$, it can be easily checked that \[\sum_{m=1}^{\infty}\sum_{i\in I} e^{-\lambda m^2} \sup_{ y \in [\sqrt{2}t - x_i, 1+ \sqrt{2}t -x_i]}e^{-\frac{(y-m)^2}{2t}} < \infty.\]
Then the Borel-Cantelli Lemma implies that for all $\lambda>0$, $\theta_t ([-m,-m+1]) \leq e^{\lambda m^2}$ for all $m$ large enough. This clearly implies \eqref{finite-integral}. \\

For checking $\theta_t([0,\infty)) < \infty$, we write \[\theta_t([0,\infty)) = \sum_{i\in I} \sum_{k\leq n^i(t)} 1_{[0,\infty)}(x_i + \chi^i_k(t) -\sqrt{2}t).\]
Similarly as above, 
\begin{align*}
\P(\theta_t ([0,\infty)) > K) & \leq \frac{1}{K} \E(\theta_t ([0,\infty)) \\
& = \frac{e^t}{K} \sum_{i\in I} \P(x_i + B_t -\sqrt{2}t \in [0,\infty)).
\end{align*}
Again, note that since $\theta_0(\R_+) <\infty$, $\sum_{x_i\in\theta_0, x_i \geq 0}\P(x_i + B_t -\sqrt{2}t \in [0,\infty)) <\infty$. Furthermore, since $\theta_0 \in \M_2$,
\begin{align*}
\sum_{x_i\in\theta_0,x_i <0}\P(x_i + B_t -\sqrt{2}t \in [0,\infty)) \lesssim \sum_{x_i\in\theta_0,x_i <0} e^{-\frac{(\sqrt{2}t -x_i)^2}{4t}} \lesssim_t \sum_{x_i\in\theta_0,x_i <0} e^{-\frac{x_i^2}{4t}} <\infty.
\end{align*}
Therefore, $\sum_{i\in I} \P(x_i + B_t -\sqrt{2}t \in [0,\infty)) <\infty$. This implies that $\lim_{K\to \infty}\P(\theta_t ([0,\infty)) > K)  = 0$, or equivalently $\theta_t ([0,\infty))  < \infty$ a.s..
\end{proof}

\begin{proof}[Proof of Theorem \ref{main-thm-feller}]
The boundedness of $\calP_tF$ is obvious. To check its continuity, let $\eta_n$ be a sequence converging in $\M_2$ to $\eta$. Let $\theta^n_t$ and $\theta_t$ be point processes obtained by starting BBM at $\theta_0 =\eta_n$ and $\theta_0 =\eta$ respectively. We claim that $\theta_t^n \overset{\calL_2}{\to} \theta_t$ as $n\to \infty$. This means that $\mu_{\theta^n_t}$ converges weakly to $\mu_{\theta_t}$, and since $F$ is bounded continuous, it follows that 
\begin{align*}
\calP_tF(\eta_n)  & = \int_{\M_2}F(\cdot)d\mu_{\theta^n_t} \\
& \overset{n\to \infty}{\longrightarrow} \int_{\M_2} F(\cdot)d\mu_{\theta_t}= \calP_tF(\eta). 
\end{align*}

To check $\theta_t^n \overset{\calL_2}{\to} \theta_t$, using Lemma \ref{conv=real-conv}, if suffices to check 
\begin{equation}\label{limit_n=theta}
\E(e^{-\<{f,\theta_t^n}}) \to \E(e^{-\<{f,\theta_t}})
\end{equation} 
for all $f\in C_{d_2}^{+}(\R)$. Using Lemma \ref{BBM=FKPP}, we can write
\begin{equation}
\E[e^{-\<{f,\theta_t^n}}] = \exp\biggl\{\int_{-\infty}^\infty\log(1- u_\varphi(t, \sqrt{2}t - x))\eta_n(dx)\biggr\},
\end{equation}
where $u_{\varphi}$ solves the FKPP equation \eqref{FKPP} with $u(0,x) = \varphi(x)= 1 - e^{-f(-x)}$. Since $\eta_n \overset{d_2}{\to} \eta$ in $\M_2$, \eqref{limit_n=theta} follows at once if we check $-\log(1- u_\varphi(t, \sqrt{2}t - \cdot)) \in C_{d_2}^{+}(\R)$. \\
To this end, using Lemma \ref{BBM=FKPP} again, we note that  
\begin{equation}\label{eta=x}
 u_\varphi(t, \sqrt{2}t - x) = 1 - \E\bigl(e^{-\sum_{k\leq n(t)}f(x + \chi_k(t) - \sqrt{2}t)}\bigr).
 \end{equation}

This clearly implies $ u_\varphi(t, \sqrt{2}t - x)$ is continuous in $x$ and for some $\epsilon >0$, $u_\varphi(t, \sqrt{2}t - x)  \leq 1 - \epsilon$ for all $x$. This implies that $-\log(1- u_\varphi(t, \sqrt{2}t - \cdot))$ is a bounded continuous function. The equation \eqref{eta=x} also implies $u_\varphi(t, \sqrt{2}t - x) \to 0 $ as $x\to -\infty$. Therefore, to check \eqref{for-some-lambda} for $-\log(1- u_\varphi(t, \sqrt{2}t - \cdot))$, it suffices to show 
\[ \limsup_{x\to -\infty} u_\varphi(t, \sqrt{2}t - x) e^{\lambda x^2}  < \infty\]
for some $\lambda >0$. Using the Many-to-One Lemma and the fact that $1- e^{-x} \leq x $ for $x\geq 0$, \eqref{eta=x} implies 
\begin{align*} 
u_\varphi(t, \sqrt{2}t - x) & \leq \E\bigl(\sum_{k\leq n(t)} f(x+ \chi_k(t) -\sqrt{2}t)\bigr) \\ 
& = e^t \E(f(x + B_t - \sqrt{2}t)). 
\end{align*}

Since $f\in C_{d_2}^{+}(\R)$, it can be easily checked that  
\[ \limsup_{x\to -\infty} \E(f(x + B_t - \sqrt{2}t))e^{\lambda x^2}  < \infty \]
for some $\lambda >0$. This completes the proof. 

\end{proof}

\section{Proof of Theorem \ref{main-thm}}\label{proof-main}

The proof of Theorem \ref{main-thm} is split into the following subsections. In this part, we take $\theta_0=\theta$ which is a $\calM_2$-valued point process.

\subsection{A reduction using Laplace Transform.} \label{reduce}
In this section we consider functions $f$ of the form \eqref{f-type} and simplify the convergence $\<{f,\theta_t} \overset{\calL}{\to}\<{f,\tilde{\mathcal{E}}_\infty}$ into an equivalent form. Using the Laplace transform, this is equivalent to

\begin{equation}\label{lap-req}
\E[ e^{-\<{f,\theta_t}}] \to \E[ e^{-\<{f,\tilde{\mathcal{E}}_\infty}}]
\end{equation} 
for all $f$ of the form \eqref{f-type}. Recall from \eqref{tilde-lap} that $\E[e^{-\<{f,\tilde{\mathcal{E}}_\infty}}] = e^{-\Cb(f)}$. Also, using Lemma \ref{BBM=FKPP}, we can easily obtain the formula (recall $\widehat{\theta}(dx) = \theta(-dx)$)

\begin{equation}\label{comp-1}
\E[e^{-\<{f,\theta_t}}] = \E[\exp\{\int_{-\infty}^\infty\log(1- u_\varphi(t, \sqrt{2}t + x))\widehat{\theta}(dx)\}],
\end{equation}
where $u_{\varphi}$ solves \eqref{FKPP} with $u(0,x) = \varphi(x)= 1 - e^{-f(-x)}$. Therefore, to obtain \eqref{lap-req}, we want to take $t\to \infty$ in \eqref{comp-1} and find conditions on $\theta$ so that the right hand side converges to $e^{-\Cb(f)}$. To this end, we first note that since $\widehat{\theta}(-\infty, 0] < \infty$, $\int_{-\infty}^0\log(1- u_\varphi(t, \sqrt{2}t + x))\ \widehat{\theta}(dx)$ is a finite sum. Also, for each fixed $x$, Theorem \ref{cvgFKPP} implies that $u_\varphi(t,\sqrt{2}t + x) \to 0$ pointwise as $t \to \infty$. Hence we deduce that 
\[ \int_{-\infty}^\infty\log(1- u_\varphi(t, \sqrt{2}t + x))\widehat{\theta}(dx) = \int_{0}^\infty\log(1- u_\varphi(t, \sqrt{2}t + x))\widehat{\theta}(dx) + o_t(1).\]
 Furthermore, the Theorem \ref{cvgFKPP} also implies that $u_\varphi(t, \sqrt{2}t +x) \to 0$ uniformly in $x >0$ as $t\to \infty$. Therefore, 
\[\int_{0}^\infty\log(1- u_\varphi(t, \sqrt{2}t + x))\widehat{\theta}(dx) = -(1+ o_t(1)) \int_{0}^\infty u_\varphi(t, \sqrt{2}t + x)\widehat{\theta}(dx)
\]

It then easily follows from these computations that $\E[e^{-\<{f,\theta_t}}] \to e^{-\Cb(f)}$ if and only if 

\begin{equation}\label{red-1}
\E[\exp\{-\int_{0}^\infty u_\varphi(t, \sqrt{2}t + x)\widehat{\theta}(dx)\}] \to e^{-\Cb(f)}.
\end{equation}

To further reduce this condition, we now use Bramson's estimate \eqref{crucial}. Since $\gamma_r^{-1}\psi(r,t,\sqrt{2}t + X) \leq u_\varphi(t, \sqrt{2}t + X) \leq \gamma_r\psi(r,t,\sqrt{2}t + X)$ and $\gamma_r \downarrow 1$ as $r\to \infty$, it follows easily that \eqref{red-1} holds if and only if
\begin{equation}\label{red-2-1}
\lim_{r\to \infty} \limsup_{t\to \infty} \E[\exp\{-\int_{0}^\infty \psi(r,t,\sqrt{2}t + x) \widehat{\theta}(dx)\}]  = e^{-\Cb(f)},
\end{equation}
and 
\begin{equation}\label{red-2-2}
\lim_{r\to \infty} \liminf_{t\to \infty} \E[\exp\{-\int_{0}^\infty \psi(r,t,\sqrt{2}t + x) \widehat{\theta}(dx)\}]  = e^{-\Cb(f)}.
\end{equation}

It will be now convenient for us to choose $t = s+r$ and write everything in terms of variables $r$ and $s$. The above equations \eqref{red-2-1} and \eqref{red-2-2} are then respectively equivalent to 

\begin{equation}\label{red-3-1}
\lim_{r\to \infty} \limsup_{s\to \infty} \E[\exp\{-\int_{0}^\infty \psi(r,s+r ,\sqrt{2}s + \sqrt{2}r+ x) \widehat{\theta}(dx)\}]  = e^{-\Cb(f)},
\end{equation}
and 
\begin{equation}\label{red-3-2}
\lim_{r\to \infty} \liminf_{s\to \infty} \E[\exp\{-\int_{0}^\infty \psi(r,s+r ,\sqrt{2}s + \sqrt{2}r+ x) \widehat{\theta}(dx)\}]  = e^{-\Cb(f)}.
\end{equation}

\vspace{4mm}

Let us write
\begin{equation}
R(r,s,y) := \int_0^\infty \frac{e^{-\sqrt{2}x}}{\sqrt{2\pi s}} e^{\frac{-(y-x)^2}{2s}}\bigl\{1- e^{-2y\frac{x+ \frac{3}{2\sqrt{2}}\log(s+r)}{s}}\bigr\}\widehat{\theta}(dx).
\end{equation}

Using Fubini's Theorem, we note that 
\[\int_0^\infty\psi(r,s+r, \sqrt{2}s+\sqrt{2}r +x) \widehat{\theta}(dx) = \int_0^\infty u_\varphi(r, y+ \sqrt{2}r)e^{\sqrt{2}y}R(r,s,y)dy.\]

We finally conclude that \eqref{lap-req} follows from

\begin{equation}\label{red-4-1}
\lim_{r\to \infty} \limsup_{s\to \infty} \E[\exp\{-\int_0^\infty u_\varphi(r, y+ \sqrt{2}r)e^{\sqrt{2}y}R(r,s,y)dy\}]  = e^{-\Cb(f)},
\end{equation}
and 
\begin{equation}\label{red-4-2}
\lim_{r\to \infty} \liminf_{s\to \infty} \E[\exp\{-\int_0^\infty u_\varphi(r, y+ \sqrt{2}r)e^{\sqrt{2}y}R(r,s,y)dy\}]  = e^{-\Cb(f)}. \\
\end{equation}

\vspace{2mm}

\subsection{Behaviour of the random variables $R(r,s,y)$.} 

We will need to quantify the dependence of the random variables $R(r,s,y)$ on the $y$ variable. We claim that $R(r,s,y)$ behaves like $yR_s$ in the limit $s\to \infty$, where 

\begin{equation}\label{definition-of-R}
R_s := \sqrt{\frac{2}{\pi}}\int_0^\infty \frac{1}{s^{\frac{3}{2}}} e^{\frac{-x^2}{2s}} xe^{-\sqrt{2}x}\widehat{\theta}(dx).
\end{equation}

More precisely, we claim the following proposition which is the most important ingredient in the proof of Theorem \ref{main-thm}. 
\begin{proposition}\label{key-prop}
If $R_s$ is tight in $s$, then for each fixed $r$ large enough, as $s \to \infty$, 
\begin{equation}\label{error-term}
\int_0^\infty u_\varphi(r, y+ \sqrt{2}r)e^{\sqrt{2}y}(R(r,s,y)-yR_s)dy \overset{\P}{\longrightarrow} 0. 
\end{equation}
\end{proposition}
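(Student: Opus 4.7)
The strategy is to show, for each fixed $y$ in the relevant bulk window $[A_1\sqrt r, A_2\sqrt r]$ (which will carry the $y$-integral by Lemma~\ref{y-order}), that $R(r,s,y) - yR_s \to 0$ in probability as $s\to\infty$, and then to integrate in $y$. Writing $L_s := \frac{3}{2\sqrt{2}}\log(s+r)$, I would start from the identity
\[
R(r,s,y) - yR_s = \frac{1}{\sqrt{2\pi s}}\int_0^\infty e^{-\sqrt{2}x}\Bigl[e^{-(y-x)^2/(2s)}\bigl(1-e^{-2y(x+L_s)/s}\bigr) - \tfrac{2yx}{s}e^{-x^2/(2s)}\Bigr]\widehat\theta(dx),
\]
and split the $x$-integral at $A\sqrt{s}$ for a large constant $A$. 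On the bulk $x\le A\sqrt s$ with $y=O(\sqrt r)$, both $y(x+L_s)/s$ and $|xy-y^2/2|/s$ are uniformly $O(\sqrt{r/s})$, permitting Taylor expansions $1-e^{-u}=u+O(u^2)$ and $e^v=1+O(|v|)$. The leading-order cancellation leaves an $L_s$-piece of size $\tfrac{2yL_s}{s\sqrt{2\pi s}}\int_0^{A\sqrt s}e^{-\sqrt 2 x}e^{-x^2/(2s)}\widehat\theta(dx)$ and quadratic corrections bounded by $\tfrac{Cy^2}{s^{5/2}}\int_0^{A\sqrt s}x^2 e^{-\sqrt 2 x}e^{-x^2/(2s)}\widehat\theta(dx)$. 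Using $e^{-\sqrt 2 x}\le x e^{-\sqrt 2 x}$ for $x\ge 1$ (to turn the $L_s$-piece into an $R_s$-integral) and $x^2\le A\sqrt s\cdot x$ on the bulk, both contributions are controlled by $O(yL_s/s+y^2/\sqrt s)\cdot R_s$ plus a term of order $\widehat\theta([0,1])/s^{3/2}$, all of which vanish in probability as $s\to\infty$ by tightness of $R_s$.

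For the tail $x>A\sqrt s$, since $y\le A_2\sqrt r$ and $s$ is large, one has $(y-x)^2\ge x^2/4$, hence $e^{-(y-x)^2/(2s)}\le e^{-x^2/(8s)}$. The subtracted piece $\tfrac{2y}{s\sqrt{2\pi s}}\int_{A\sqrt s}^\infty xe^{-\sqrt 2 x}e^{-x^2/(2s)}\widehat\theta(dx)$ is dominated by $R_s$ multiplied by a cutoff factor that can be made arbitrarily small by taking $A$ large, after conditioning on a high-probability event given by tightness of $R_s$. The $R(r,s,y)$-piece of the tail is bounded, using $1-e^{-u}\le 1$, by $\tfrac{1}{\sqrt{2\pi s}}\int_{A\sqrt s}^\infty e^{-\sqrt{2}x}e^{-x^2/(8s)}\widehat\theta(dx)$; this I would handle via $\widehat\theta\in\mathcal{M}_2$, since the a.s.\ bound $\widehat\theta([n,n+1])\le e^{\epsilon_n n^2}$ with $\epsilon_n\to 0$, combined with the decay from $e^{-\sqrt 2 x}$, makes the tail summable and arbitrarily small for $A$ chosen large enough depending on the realization of $\widehat\theta$.

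Combining the two estimates, $R(r,s,y)-yR_s\to 0$ in probability, uniformly for $y\in[A_1\sqrt r, A_2\sqrt r]$; integrating against $u_\varphi(r,y+\sqrt 2 r)e^{\sqrt 2 y}$ on this window is then negligible. The integral over the complementary range of $y$ is negligible by Lemma~\ref{y-order}, provided one has a crude polynomial-in-$y$ upper bound on $R(r,s,y)+yR_s$ (readily obtained via $1-e^{-u}\le u$). The main obstacle I anticipate is precisely the $R(r,s,y)$-tail in $x$ above: because $\mathcal{M}_2$ only supplies a qualitative subquadratic growth rate $\epsilon_n\to 0$ that is random and non-uniform in the realization, the tail bound will require either randomness-dependent choices of $A$, or a cleaner rearrangement that lets $R_s$-tightness by itself subsume both tail contributions without invoking the $\mathcal{M}_2$ control.
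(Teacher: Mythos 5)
Your proposal is a bulk/tail dissection in $x$ combined with a Taylor expansion, which is the same starting point as the paper's argument, but the bookkeeping of the dominant term goes wrong in a way that hides the genuine difficulty of the proposition.

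The decisive gap is your handling of what you call the $L_s$-piece. After replacing $e^{-(y-x)^2/(2s)}$ by $e^{-x^2/(2s)}$, this piece is
\[
\frac{2yL_s}{s\sqrt{2\pi s}}\int_0^{A\sqrt s}e^{-\sqrt 2 x}e^{-x^2/(2s)}\widehat\theta(dx),
\]
and applying $e^{-\sqrt 2 x}\le x e^{-\sqrt 2 x}$ gives a bound of the form $y L_s R_s$ (up to the negligible $\widehat\theta([0,1])/s^{3/2}$ contribution), \emph{not} $O(yL_s/s)\,R_s$ as you state: you have lost a factor of $s$. Since $L_s\sim\log s$ and $R_s$ is only tight (it does not tend to $0$), the bound $y L_s R_s$ diverges. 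This logarithmic piece is in fact the whole substance of the proposition, and tightness of $R_s$ alone applied naively cannot close it: one must show directly that $\frac{\log s}{s^{3/2}}\int_0^\infty e^{-\sqrt 2 x}e^{-cx^2/s}\widehat\theta(dx)\to 0$ in probability, which is exactly the delicate step the paper isolates as \eqref{last-term} and proves by a Tauberian-type argument (introducing the auxiliary measures $\beta$ and $q$, splitting the Laplace integral at $|\log\lambda|^{2\alpha}$, and exploiting the extra $\log$-margin). The authors emphasize in the remark following the proof that they were ``fortunate'' to face only a $\log$ loss; your strategy does not see that there is a loss at all. (The same issue reappears in the $L_s^2$ quadratic correction, which your error term $\frac{Cy^2}{s^{5/2}}\int x^2 e^{-\sqrt 2 x}e^{-x^2/(2s)}\widehat\theta(dx)$ does not account for.)

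Two further, smaller points. First, the $x$-tail obstacle you flag at the end is real if one bounds $1-e^{-u}\le 1$ as you do, and it cannot be rescued via the qualitative $\mathcal M_2$ growth bound, which is random and non-uniform; using $1-e^{-u}\le u$ instead turns the tail into a small-multiple-of-$R_{cs}$ bound with an $e^{-cA^2}$ prefactor. But the paper avoids the bulk/tail split in $x$ altogether by decomposing the kernel $L(r,s,y,x)$ directly into five factorized pieces $I_1,\dots,I_5$ (each a product of a $y$-function and an $x$-function), doing Fubini and dominated convergence in $y$ against the Gaussian weight, and reducing each $x$-integral to quantities controlled by $R_{cs}$ or by the Tauberian argument. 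Second, Lemma \ref{y-order} is the wrong tool to truncate the $y$-integral at fixed $r$: that lemma controls the $y$-tails only after $\limsup_{r\to\infty}$, whereas Proposition \ref{key-prop} is stated for fixed $r$. The correct ingredient is Lemma \ref{sharp-bound}, which gives $u_\varphi(r, y+\sqrt 2 r)e^{\sqrt 2 y}\lesssim_r e^{-y^2/(4r)}$ and thus Gaussian decay in $y$ uniformly in $s$.
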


The proof of above proposition is postponed to Section \ref{proof-of-main-ingredient}. \\

\subsection{Tightness of $R_s$.}

Note that \eqref{result-1} is same as $R_s \overset{\P}{\longrightarrow} \sqrt{\frac{2}{\pi}}$. Hence, if \eqref{result-1} holds, then $R_s$ is clearly tight in $s$. We further claim:

\begin{lemma}\label{tight-from-CGS-BBM}
If $\theta_t([0,\infty)) \overset{\calL}{\to} \tE([0,\infty))$, then:
\begin{enumerate}[label=(\alph*)]
\item For $M_t = \sup_{k\leq n(t)}\chi_k(t)$, \[Z_t := \int_{-\infty}^0 \P(x+ M_t -\sqrt{2}t \geq 0)\theta(dx)\] is tight in $t$. \\

\item The random variable $R_s$ is tight in $s$.
\end{enumerate}

In particular, $R_s$ is tight in $s$ if the condition \eqref{conv-for-all-b} holds. 
\end{lemma}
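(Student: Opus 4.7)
The plan is to prove (a) using a Chebyshev-type concentration argument that exploits the conditional independence of the BBMs, and then deduce (b) from (a) by comparing the integral kernels of $Z_{2t}$ and $R_t$ using Bramson's lower bound (Lemma \ref{lower-bound}). The ``in particular'' claim reduces to (b) by specialising \eqref{conv-for-all-b} to $b = 0$.

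For (a), write $\theta = \sum_i \delta_{x_i}$ and let $N_i$ be the number of descendants of the $i$-th initial particle lying above $0$ at time $t$, so that $S := \theta_t([0,\infty)) = \sum_i N_i$. Setting $\bar N_i := \mathbf{1}_{N_i \geq 1}$ and $\bar S := \sum_i \bar N_i$, we have $S \geq \bar S$, and conditionally on $\theta$ the $\bar N_i$'s are independent Bernoullis with means $p_i := \P(x_i + M_t - \sqrt{2}t \geq 0)$. Let $Z'_t := \sum_i p_i$, so $Z'_t = Z_t + \sum_{x_i > 0} p_i$, with the second sum bounded above by $\theta((0,\infty)) < \infty$ a.s.\ and independent of $t$. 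Because $\var(\bar S \mid \theta) \leq Z'_t$, Chebyshev gives $\P(\bar S \leq Z'_t/2 \mid \theta) \leq 4/Z'_t$, which on the event $\{Z'_t \geq 2K\}$ and after a short manipulation yields
\begin{equation*}
\P(Z'_t \geq 2K) \;\leq\; 2\,\P(S > K) \qquad (K \geq 4).
\end{equation*}
Since $S \overset{\calL}{\to} \tE([0,\infty))$ with an a.s.\ finite limit, $\{S_t\}_t$ is tight, so the right-hand side tends to $0$ as $K \to \infty$ uniformly in $t$. Hence $Z'_t$, and a fortiori $Z_t \leq Z'_t$, is tight.

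For (b), note that the change of variables $y = -x$ gives $Z_{2t} = \int_0^\infty u_M(2t, \sqrt{2}\cdot 2t + y)\,\widehat{\theta}(dy)$, and the lower bound of Lemma \ref{lower-bound} applied at $s = 2t$ (keeping only the $y$ part of the factor $y + \log s$) produces
\begin{equation*}
Z_{2t} \;\geq\; c_2 \int_0^\infty y\,(2t)^{-3/2} e^{-\sqrt{2}y} e^{-y^2/(2t)}\,\widehat{\theta}(dy) \;=\; c\, R_t
\end{equation*}
for some absolute constant $c > 0$, where the Gaussian kernel now matches that of $R_t$. Tightness of $\{Z_s\}_s$ from (a) therefore transfers to tightness of $\{R_t\}_t$.

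The main obstacle is the Chebyshev step in (a): it is what converts tightness of the \emph{count} $S = \theta_t([0,\infty))$ into tightness of the \emph{sum of survival probabilities} $Z'_t$. A priori each $N_i$ can greatly exceed its indicator $\bar N_i$, so one could fear that $S$ stays bounded while $Z'_t$ blows up; this is ruled out by the concentration of $\bar S$ around its conditional mean $Z'_t$, which itself relies on the conditional independence of the BBMs and the variance bound $\var(\bar S \mid \theta) \leq Z'_t$ for sums of independent Bernoullis.
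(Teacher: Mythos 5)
Your proof is correct and follows essentially the same approach as the paper: part (a) is the same Chebyshev/second-moment argument on the conditionally independent Bernoulli indicators $\bar N_i=\mathbf{1}_{N_i\ge 1}$ (the paper packages this as Lemma \ref{concentration}), and part (b) is obtained by comparing $Z_{2t}$ with $R_t$ through the lower bound of Lemma \ref{lower-bound}, which is exactly what the paper's ``easily follows'' refers to. The only cosmetic difference is that you sum over all $i$ and note that the $x_i>0$ particles contribute a finite, $t$-independent remainder, whereas the paper restricts to $x_i\le 0$ from the start; your rearranged bound $\P(Z'_t\ge 2K)\le 2\P(S>K)$ for $K\ge 4$ is a valid (and slightly sharper) form of the paper's $\P(Z_t\ge K)\le \tfrac{4}{K}+\P(\theta_t(\R_+)\ge K/2)$.
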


The part-$(a)$ of the above lemma was proven in our previous article \cite{CGS-BBM}. We repeat its proof here for readers' convenience. Recall the following well known fact which is an easy consequence of the Chebyshev's inequality.
\begin{lemma}\label{concentration}
Let $\{X_i\}_{i\in\N^*}$ be independent Bernoulli random variables such that $\E[X_i]=p_i$. For any subset $I\subset \N^*$, let $X_I:=\sum_{i\in I}X_i$. If $\E[X_I]=\sum_{i\in I}p_i$ is finite, then for any $\varepsilon\in(0,1)$,
\[
\P\left(|X_I-\E[X_I]|\ge \varepsilon \E[X_I]\right)\leq \frac{1}{\varepsilon^2\E[X_I]}.
\]
If $\E[X_I]=\sum_{i\in I}p_i=\infty$, then $I$ is an infinite set and 
\[
X_I=+\infty,\textrm{ a.s. }
\]
\end{lemma}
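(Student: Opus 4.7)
The plan is to break the statement into two essentially independent parts and treat each with a standard one-line tool.

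For the quantitative bound when $\E[X_I]<\infty$, I would apply Chebyshev's inequality directly to $X_I$. The key observation is that since the $X_i$ are \emph{independent} Bernoulli variables,
\[
\Var{X_I}=\sum_{i\in I}p_i(1-p_i)\le \sum_{i\in I}p_i=\E[X_I].
\]
(Independence is used here to turn the variance of the sum into a sum of variances; the Bernoulli nature is used through $p_i(1-p_i)\le p_i$.) Then Chebyshev gives
\[
\P\bigl(|X_I-\E[X_I]|\ge \varepsilon \E[X_I]\bigr)\le \frac{\Var{X_I}}{\varepsilon^2 \E[X_I]^2}\le \frac{1}{\varepsilon^2 \E[X_I]},
\]
which is exactly the claimed bound.

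For the a.s. divergence when $\sum_{i\in I}p_i=\infty$, I would first note that $I$ must be infinite, since otherwise the finite sum $\sum_{i\in I}p_i\le |I|$ would be finite. Then I would invoke the second Borel--Cantelli lemma: the events $A_i=\{X_i=1\}$ are independent (because the $X_i$ are), and $\sum_{i\in I}\P(A_i)=\sum_{i\in I}p_i=\infty$, so $\P(A_i\text{ occurs for infinitely many }i\in I)=1$. Each such occurrence contributes $1$ to $X_I$, hence $X_I=+\infty$ almost surely.

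There is no real obstacle here: the only things to keep track of are that independence is used in both parts (sum of variances in the first, Borel--Cantelli in the second), and that the Bernoulli structure is used only through the inequality $p_i(1-p_i)\le p_i$ and through the fact that $X_i\in\{0,1\}$ allows us to convert ``$A_i$ i.o.'' into ``$X_I=\infty$''. Both parts together give the stated dichotomy.
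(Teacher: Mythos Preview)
Your proof is correct and matches the paper's approach: the paper simply states that the lemma is ``an easy consequence of the Chebyshev's inequality'' without writing out the details, and you have filled in precisely those details (variance bound via independence and $p_i(1-p_i)\le p_i$, then Chebyshev; second Borel--Cantelli for the divergent case).
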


\begin{proof}[Proof of Lemma \ref{tight-from-CGS-BBM}]
For part-$(a)$, define
\[\mathcal{Z}_t:= \sum_{x_i\in\theta_0, x_i \leq 0} 1_{x_i + M^i_t - \sqrt{2}t \geq 0}.\]
Then $\mathcal{Z}_t \leq \theta_t(\R_+)$ and $\E[\mathcal{Z}_t | \theta] = Z_t$. Hence, using Lemma \ref{concentration}, for any $K>0$,

\begin{align}
&\P(Z_t\geq K)\nonumber\\
\leq & \P\left(Z_t\geq K; |\mathcal{Z}_t-Z_t|\geq \frac12 Z_t\right)+\P\left(Z_t\geq K; \mathcal{Z}_t\ge\frac12 Z_t\right)\nonumber\\
\leq & \E\left[\ind{Z_t\geq K}\P(|\mathcal{Z}_t-Z_t|\geq \frac12 Z_t\vert\theta)\right]+\P\left(\mathcal{Z}_t\geq \frac12 K\right)\nonumber\\
\leq & \E\left[\frac{4}{Z_t}\ind{Z_t\geq K}\right]+\P\left(\mathcal{Z}_t\geq \frac12 K\right) \nonumber \\
\leq & \frac{4}{K} + \P\left(\theta_t(\R_{+})\geq \frac12 K\right).\nonumber
\end{align}

Since $\theta_t(\R_+) \overset{\calL}{\to} \tilde{\mathcal{E}}_\infty(\R_+)$, the sequence $\theta_t(\R_+)$ is tight. This implies that the sequence $Z_t$ is also tight. \\
The part-$(b)$ easily follows from part-$(a)$ and Lemma \ref{lower-bound}.
\end{proof}

\begin{remark}
The Lemma \ref{tight-from-CGS-BBM} implies in particular that for $\theta = \tE$, 
\[X_t := \frac{1}{t^{\frac{3}{2}}}\int_{-\infty}^0 (-x)e^{\sqrt{2}x} e^{-\frac{x^2}{2t}}\tE(dx)
\] 
is tight in $t$. This already gives some information about the structure of $\tE$. (Note that this particular point was already known as it can also be verified using results of \cite{lisa-paper}). Recall notations $\calP = \sum_{i\geq 1}\delta_{p_i}$ and $\calD$ from \eqref{tilde-def}. It was proven in \cite{lisa-paper} that as $v\to \infty$, $\E(\calD([-v,0])) \sim Ce^{\sqrt{2}v}$ for some constant $C$. Using this, it can be easily verified that $\sup_{t\geq 1} \E(X_t \bigl | \calP) <\infty$, which implies the tightness of $X_t$. 
\end{remark}

\subsection{Proof of Proposition \ref{key-prop}.}\label{proof-of-main-ingredient}

We now prove \eqref{error-term}.
Note that 
\[
R(r,s,y) - yR_s = \int_0^\infty \frac{e^{-\sqrt{2}x}}{\sqrt{2\pi s}} e^{\frac{-x^2}{2s}}L(r,s,y,x)\widehat{\theta}(dx),
\]

where \[ L(r,s,y,x) :=  e^{\frac{2xy-y^2}{2s}}\biggl(1- e^{-2y\frac{x+ \frac{3}{2\sqrt{2}}\log(s+r)}{s}}\biggr) - \frac{2xy}{s}.\]
We estimate $L(r,s,y,x)$ as follows. By simple addition and subtraction, we write
\[
L(r,s,y,x) = e^{\frac{2xy-y^2}{2s}}\biggl\{F\biggl( 2y\frac{x+ \frac{3}{2\sqrt{2}}\log(s+r)}{s}\biggr) + 2y\frac{\frac{3}{2\sqrt{2}}\log(s+r)}{s}\biggr\}  + \frac{2xy}{s}\biggl( e^{\frac{2xy-y^2}{2s}}-1\biggr),
\]
where $F(x) = 1-e^{-x} - x$. Note that $|F(x)| \lesssim x^2$ and  
\[
e^{\frac{2xy-y^2}{2s}} \leq e^{\frac{x^2}{4s}} e^{\frac{y^2}{2s}}.
\]

Putting these estimates together, we obtain that 

\[ |L(r,s,y,x)| \lesssim  e^{\frac{x^2}{4s}} e^{\frac{y^2}{2s}}\biggl( \frac{y^2x^2}{s^2}+ \frac{y^2\log(s+r)^2}{s^2}+ \frac{y\log(s+r)}{s}\biggr) + \frac{xy}{s}\biggl| e^{\frac{2xy-y^2}{2s}}-1\biggr|.\]

Furthermore, one can easily check that 
\[
\biggl| e^{\frac{2xy-y^2}{2s}}-1\biggr| \leq e^{\frac{y^2}{2}(\frac{1}{\sqrt{s}}-\frac{1}{s})} \biggl(e^{\frac{x^2}{2s^{\frac{3}{2}}}} -1\biggr) + e^{\frac{y^2}{2}(\frac{1}{\sqrt{s}}-\frac{1}{s})} + 1 - 2 e^{\frac{-y^2}{2s}}.
\] 
Let us write \[ I_1(r, s,y, x) =  \frac{x^2}{s^2}e^{\frac{x^2}{4s}} y^2e^{\frac{y^2}{2s}},\]

\[ I_2(r, s,y, x) =  \frac{\log(s+r)^2}{s^2}e^{\frac{x^2}{4s}} y^2e^{\frac{y^2}{2s}},\]

\[ I_3(r, s,y, x) =  \frac{\log(s+r)}{s}e^{\frac{x^2}{4s}} ye^{\frac{y^2}{2s}},\]

\[ I_4(r, s,y, x) = \frac{x}{s} \biggl(e^{\frac{x^2}{2s^{\frac{3}{2}}}} -1\biggr)ye^{\frac{y^2}{2}(\frac{1}{\sqrt{s}}-\frac{1}{s})},  \]
and 
\[ I_5(r,s,y,x) = \frac{x}{s} y\biggl( e^{\frac{y^2}{2}(\frac{1}{\sqrt{s}}-\frac{1}{s})} + 1 - 2 e^{\frac{-y^2}{2s}}\biggr).\]

The above estimates give $ |L(r,s,y,x)| \lesssim I_1 + I_2 + I_3 + I_4 + I_5$. We now prove that for all $i=1,2,3,4,5$, 
\[  \int_0^\infty \int_0^\infty u_\varphi(r, y+ \sqrt{2}r)e^{\sqrt{2}y}  \frac{e^{-\sqrt{2}x}}{\sqrt{2\pi s}} e^{\frac{-x^2}{2s}}I_i(r,s,y,x)\widehat{\theta}(dx) dy \overset{\P}{\longrightarrow} 0.\]

Also, using Lemma \ref{sharp-bound}, we obtain that  $u_\varphi(r, y+ \sqrt{2}r)e^{\sqrt{2}y} \lesssim e^{-\frac{y^2}{4r}}$. It therefore suffices to prove that for all $1\leq i \leq 5$, 

\begin{equation}\label{most-crucial-part}
 \int_0^\infty \int_0^\infty  e^{-\frac{y^2}{4r}} \frac{e^{-\sqrt{2}x}}{\sqrt{2\pi s}} e^{\frac{-x^2}{2s}}I_i(r,s,y,x)\widehat{\theta}(dx) dy \overset{\P}{\longrightarrow} 0.
 \end{equation}

\vspace{6mm}

\subsection*{Verifying the convergence to zero for $I_1,I_4,I_5$.}\qquad 

\vspace{2mm}
For $I_1$, using Fubini's Theorem,
\[
\int_0^\infty \int_0^\infty  e^{-\frac{y^2}{4r}} \frac{e^{-\sqrt{2}x}}{\sqrt{2\pi s}} e^{\frac{-x^2}{2s}}I_1(r,s,y,x)\widehat{\theta}(dx) dy = \int_0^\infty y^2e^{\frac{y^2}{2s}} e^{-\frac{y^2}{4r}} dy \int_0^\infty \frac{x^2}{s^2}\frac{e^{-\sqrt{2}x}}{\sqrt{2\pi s}} e^{\frac{-x^2}{4s}}\widehat{\theta}(dx).
\]
The dominated convergence theorem implies that as $s\to \infty$, 

\[  \int_0^\infty y^2e^{\frac{y^2}{2s}} e^{-\frac{y^2}{4r}} dy \to \int_0^\infty y^2e^{-\frac{y^2}{4r}} dy.\]
Also,
\begin{equation}\label{increase-x-by-1}
\int_0^\infty \frac{x^2}{s^2}\frac{e^{-\sqrt{2}x}}{\sqrt{2\pi s}} e^{\frac{-x^2}{4s}}\widehat{\theta}(dx) = \frac{1}{\sqrt{2\pi s}}\int_0^\infty \frac{x}{\sqrt{s}}\frac{x e^{-\sqrt{2}x}}{s^{\frac{3}{2}}} e^{\frac{-x^2}{4s}}\widehat{\theta}(dx).
\end{equation}
Using the bound $\frac{x}{\sqrt{s}} \lesssim e^{\frac{x^2}{8s}}$, we obtain 
\[
\int_0^\infty \frac{x^2}{s^2}\frac{e^{-\sqrt{2}x}}{\sqrt{2\pi s}} e^{\frac{-x^2}{4s}}\widehat{\theta}(dx) \lesssim \frac{1}{\sqrt{2\pi s}}\int_0^\infty \frac{x e^{-\sqrt{2}x}}{s^{\frac{3}{2}}} e^{\frac{-x^2}{8s}}\widehat{\theta}(dx) \lesssim \frac{R_{4s}}{\sqrt{2\pi s}} . \]
Since $R_s$ is tight in $s$, $ \frac{R_{4s}}{\sqrt{2\pi s}} \overset{\P}{\longrightarrow} 0$, which proves \eqref{most-crucial-part} for $I_1$. \\

We will repeatedly use Fubini's Theorem and the dominated convergence theorem similarly as above. \\ 

For $I_4$, it boils down to check that  
\[\frac{1}{s^{\frac{3}{2}}}\int_0^\infty xe^{-\sqrt{2}x}e^{\frac{-x^2}{2s}}(e^{\frac{x^2}{2s^{\frac{3}{2}}}} -1)\widehat{\theta}(dx) \overset{\P}{\longrightarrow} 0.\]

To this end, we write it as 
\[
\frac{1}{s^{\frac{3}{2}}}\int_0^\infty xe^{-\sqrt{2}x}e^{\frac{-x^2}{2s}}(e^{\frac{x^2}{2s^{\frac{3}{2}}}} -1)\widehat{\theta}(dx)= \frac{1}{s^{\frac{3}{2}}}\int_0^\infty xe^{-\sqrt{2}x}e^{\frac{-x^2}{4s}}(e^{\frac{-x^2}{4s}(1-\frac{2}{\sqrt{s}})} -e^{\frac{-x^2}{4s}})\widehat{\theta}(dx).
\]
Noting that $ |e^{\frac{-x^2}{4s}(1-\frac{2}{\sqrt{s}})} -e^{\frac{-x^2}{4s}}| \lesssim \frac{x^2}{s^{\frac{3}{2}}}  \lesssim \frac{1}{\sqrt{s}} e^{\frac{x^2}{8s}}$, we get 

\[ \frac{1}{s^{\frac{3}{2}}}\int_0^\infty xe^{-\sqrt{2}x}e^{\frac{-x^2}{4s}}(e^{\frac{-x^2}{4s}(1-\frac{2}{\sqrt{s}})} -e^{\frac{-x^2}{4s}})\widehat{\theta}(dx) \lesssim \frac{1}{\sqrt{s}\times s^{\frac{3}{2}}}\int_0^\infty xe^{-\sqrt{2}x}e^{\frac{-x^2}{8s}}\widehat{\theta}(dx) \lesssim \frac{R_{4s}}{\sqrt{s}}.\]

Since $R_s$ is tight, this proves \eqref{most-crucial-part} for $I_4$.  \\

For $I_5$, we simply note that as $s\to \infty$, 
\[ \int_0^\infty e^{\frac{-y^2}{4r}}y\biggl( e^{\frac{y^2}{2}(\frac{1}{\sqrt{s}}-\frac{1}{s})} + 1 - 2 e^{\frac{-y^2}{2s}}\biggr) dy \to 0.\] 
Together with the tightness of $R_s$, this proves \eqref{most-crucial-part} for $I_5$. \\
\subsection*{Verifying the convergence to zero for $I_2,I_3$.}\qquad 

\vspace{2mm}

We claim that 
\begin{equation}\label{last-term}
\frac{\log(s)}{s^{\frac{3}{2}}}\int_0^\infty e^{-\sqrt{2}x} e^{\frac{-x^2}{4s}}\widehat{\theta}(dx) \overset{\P}{\longrightarrow} 0.
\end{equation}

Then, using Fubini's Theorem and the dominated convergence theorem similarly as above proves \eqref{most-crucial-part} for $I_2, I_3$.  \\

To prove \eqref{last-term}, let us simplify notations by writing $\lambda = \frac{1}{4s}$ and introduce the measure $d\beta(x) = e^{-\sqrt{2x}} d\widehat{\theta}(\sqrt{x})$. Then, the equation \eqref{last-term} is equivalent to 
\begin{equation} \label{final}
-\log(\lambda) \lambda^{\frac{3}{2}} \int_0^\infty e^{-\lambda x }\beta(dx) \overset{\P}{\longrightarrow} 0, 
\end{equation}
as $\lambda \to 0+$. Also, the tightness of $R_s$ translates into tightness of  
\begin{equation}\label{tight-no-log}
\lambda^{\frac{3}{2}} \int_0^\infty \sqrt{x}e^{-\lambda x }d\beta(x)
\end{equation}
for $\lambda \in (0,1)$. We claim that for any $\alpha >1$, the sequence 
\begin{equation}\label{tight-log-alpha}
|\log(\lambda)|^\alpha \lambda^{\frac{3}{2}} \int_0^\infty e^{-\lambda x }\beta(dx) 
\end{equation}
is tight. This clearly implies the claim \eqref{final}. To prove the tightness of \eqref{tight-log-alpha}, 
we split it as 
\begin{equation}\label{split-it}
 |\log(\lambda)|^\alpha \lambda^{\frac{3}{2}} \int_{|\log(\lambda)|^{2\alpha}}^{\infty} e^{-\lambda x }\beta(dx) \hspace{2mm}+ \hspace{2mm} |\log(\lambda)|^\alpha \lambda^{\frac{3}{2}} \int_0^{|\log(\lambda)|^{2\alpha}} e^{-\lambda x }\beta(dx).
 \end{equation}

The first term is tight since \eqref{tight-no-log} is tight and 
\begin{align*}
|\log(\lambda)|^\alpha \lambda^{\frac{3}{2}} \int_{|\log(\lambda)|^{2\alpha}}^\infty &e^{-\lambda x }d\beta(x) \\ &\leq \lambda^{\frac{3}{2}} \int_{|\log(\lambda)|^{2\alpha}}^\infty \sqrt{x}e^{-\lambda x }d\beta(x) \leq \lambda^{\frac{3}{2}} \int_{0}^\infty \sqrt{x}e^{-\lambda x }d\beta(x).
\end{align*}
For the tightness of the second part of \eqref{split-it}, we use the trivial bound  
\[
|\log(\lambda)|^\alpha \lambda^{\frac{3}{2}} \int_{0}^{|\log(\lambda)|^{2\alpha}} e^{-\lambda x }d\beta(x) \leq |\log(\lambda)|^\alpha \lambda^{\frac{3}{2}} \beta(|\log(\lambda)|^{2\alpha}).
\]
It therefore suffices to prove that $|\log(\lambda)|^\alpha \lambda^{\frac{3}{2}} \beta(|\log(\lambda)|^{2\alpha})$ is tight. We in fact prove that \[|\log(\lambda)|^\alpha \lambda^{\frac{3}{2}} \beta(|\log(\lambda)|^{2\alpha})  \overset{\P}{\longrightarrow} 0\] as $\lambda \to 0+$. To this end, introduce another measure $dq(x) = \sqrt{x}d\beta(x)$ so that 
\[
\lambda^{\frac{3}{2}} \int_0^\infty \sqrt{x}e^{-\lambda x }d\beta(x) = 
\lambda^{\frac{3}{2}} \int_0^\infty e^{-\lambda x }dq(x).
\]
Using integration by parts formula, 
\[
\lambda^{\frac{3}{2}} \int_0^\infty e^{-\lambda x }dq(x) =  \lambda^{\frac{5}{2}} \int_0^\infty q(x)e^{-\lambda x }dx = \lambda^{\frac{3}{2}} \int_0^\infty q\bigl(\frac{x}{\lambda}\bigr)e^{-x }dx.
\]
Using monotonicity of $q(x)$, we note that 
\[
\lambda^{\frac{3}{2}} \int_0^\infty q\bigl(\frac{x}{\lambda}\bigr)e^{-x }dx \geq \lambda^{\frac{3}{2}} \int_1^2 q\bigl(\frac{x}{\lambda}\bigr)e^{-x }dx \geq \lambda^{\frac{3}{2}}  q\bigl(\frac{1}{\lambda}\bigr)\int_1^2 e^{-x }dx. \\
\]

\vspace{2mm}
It therefore follows from \eqref{tight-no-log} that $\lambda^{\frac{3}{2}}  q\bigl(\frac{1}{\lambda}\bigr)$ is tight. Furthermore, note that 
\[
\beta(x) - \beta(1) = \int_1^x \frac{1}{\sqrt{y}} dq(y) \leq q(x) - q(1).
\]
This implies that 
\[ \beta(x) \leq \beta(1) - q(1) + q(x).\]
Using the tightness of $q(x)/x^{\frac{3}{2}}$, the above inequality easily implies
\[|\log(\lambda)|^\alpha \lambda^{\frac{3}{2}} \beta(|\log(\lambda)|^{2\alpha})  \overset{\P}{\longrightarrow} 0,\] which completes the proof. \\

\begin{remark}
One may argue that the above proof of \eqref{last-term} is not a {\em sharp} proof. Still, we did not succeed finding a better argument and we now wish to emphasize a subtle aspect of the proof above. As a matter of fact, we have been very fortunate that we only had to handle the expression of \eqref{last-term}. Elaborating on this, let us assume 

\begin{equation}\label{R-repeat}
\frac{1}{s^{\frac{3}{2}}}\int_0^\infty  xe^{-\sqrt{2}x} e^{\frac{-x^2}{2s}} \widehat{\theta}(dx) \overset{\P}{\longrightarrow} 1
\end{equation}
as $s\to \infty$. Heuristically speaking, it is clear that only $x= O(\sqrt{s})$ will contribute in the above integral as $s \to \infty$. Therefore, if we increase the exponent of $x$ by $1$ in \eqref{R-repeat} and consider $\int_0^\infty  x^2e^{-\sqrt{2}x} e^{\frac{-x^2}{2s}} \widehat{\theta}(dx)$, we will have to compensate the power of $s$ by increasing it by $1/2$. Thus, we expect that 
\[\frac{1}{s^{2}}\int_0^\infty  x^2e^{-\sqrt{2}x} e^{\frac{-x^2}{2s}} \widehat{\theta}(dx)\]
is tight in $s$. As we saw in the case for $I_1$, this was indeed the case which in turn forced \eqref{increase-x-by-1} to converge to zero. \\ 
For \eqref{last-term}, we see that the exponent of $x$ is on the other hand decreased by $1$ as compared to \eqref{R-repeat}. Then, one should expect the exponent of $s$ to decrease by $\frac{1}{2}$. Hence, one would expect that for any $\epsilon >0$,
\begin{equation}\label{unproven-fact}
\frac{1}{s^{1+ \epsilon}} \int_0^\infty e^{-\sqrt{2}x} e^{\frac{-x^2}{2s}}\widehat{\theta}(dx) \overset{\P}{\longrightarrow} 0.
\end{equation}
When $\widehat{\theta}$ is deterministic, this follows easily using HLK Tauberian theorem (see Section \ref{s.Tauber}) and the equivalence between \eqref{power-exchange-1}, \eqref{power-exchange-2}. However, since we are dealing with random $\theta$ and convergence in probability, we found that decreasing the exponent of $x$ is harder to analyse, mainly due to reasons which are very similar as the ones we will point out in Section \ref{cant-change-power-x}. To the best of our efforts, we could not prove \eqref{unproven-fact}. We are thus very lucky that in \eqref{last-term}, even though the exponent of $x$ decreased by $1$, the exponent of $s$ remain unchanged only at a cost of a logarithmic factor. We have used this exponential margin very crucially in the proof of \eqref{last-term} above. Improving the weaker estimate \eqref{last-term} to better estimates such as \eqref{unproven-fact} requires a new idea. We formulate it as an open problem.  
\end{remark}

\begin{question}\label{op2}
If \eqref{result-1} holds, does it imply 
\begin{equation}\label{decreased-x-converge?}
\frac{1}{s}\int_0^\infty e^{-\sqrt{2}x} e^{\frac{-x^2}{2s}} \widehat{\theta}(dx) 
\end{equation}
converges in probability to some constant $C$? Or, is it even true that \eqref{decreased-x-converge?} is tight in $s$?
\end{question}

\subsection{Reducing test functions for checking $\theta_t \overset{\calL_2}{\to} \tE$.}

Another ingredient in the proof of Theorem \ref{main-thm} is the following lemma which allows us to conclude $\theta_t \overset{\calL_2}{\to} \tE$ only by checking $\<{f,\theta_t} \overset{\calL}{\to} \<{f,\tE}$ for all $f$ of the form \eqref{f-type}. This uses a special property of the sequence $\theta_t$. The following is specific to the BBM Markov process $\theta_t$ and it does not hold for any generic sequence $\theta_n$ in $\M_2$.

\begin{lemma}\label{m-convergence-automatic} 
Let $\theta_0 =\theta$ be a $\M_2$-valued point process. Then,  $\theta_t \overset{\calL_2}{\to} \tilde{\mathcal{E}}_\infty$ if and only if 
\begin{equation}\label{conv-for-f-type}
\<{f,\theta_t} \overset{\calL}{\to} \<{f, \tilde{\mathcal{E}}_\infty} \hspace{2mm} \mbox{for all $f$ of the form \eqref{f-type}}.
\end{equation}
\end{lemma}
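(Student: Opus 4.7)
The strategy is to treat the two implications asymmetrically, with the converse being the one that requires the BBM-specific input.

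For the direction $\theta_t\overset{\calL_2}{\to}\tilde{\mathcal{E}}_\infty\Rightarrow\eqref{conv-for-f-type}$, I would use the continuous mapping theorem. A function $f$ of the form \eqref{f-type} is discontinuous only at the finite set $\{b_1,\dots,b_n\}$, so the map $\eta\mapsto\<{f,\eta}$ from $\M_2$ to $\R_+$ is continuous at every $\eta$ with $\eta(\{b_k\})=0$ for all $k$; this can either be invoked directly, or checked by hand by sandwiching $\mathbf{1}_{[b_k,\infty)}$ between two approximating functions in $C_{d_2}^+(\R)$. Thus the only point to verify is $\P(\tilde{\mathcal{E}}_\infty(\{b\})>0)=0$ for every $b\in\R$. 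This follows from the description \eqref{tilde-def} of $\tilde{\mathcal{E}}_\infty=\sum_{i,j}\delta_{p_i+\calD^i_j}$: conditioning on the decorations $\{\calD^i\}$, having an atom at $b$ forces the Poisson process $\calP$ (with absolutely continuous intensity $\sqrt{2}\Cb_M e^{-\sqrt{2}x}dx$, independent of the decorations) to hit the countable set $\{b-\calD^i_j:i,j\}$, which has probability zero. The continuous mapping theorem then gives $\<{f,\theta_t}\overset{\calL}{\to}\<{f,\tilde{\mathcal{E}}_\infty}$.

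For the converse $\eqref{conv-for-f-type}\Rightarrow\theta_t\overset{\calL_2}{\to}\tilde{\mathcal{E}}_\infty$, the plan is to reduce to Theorem~\ref{main-thm}, which has just been proved. Applying \eqref{conv-for-f-type} to $f=\mathbf{1}_{[b,\infty)}$ (which is of the form \eqref{f-type} with $n=1$, $c_1=1$, $b_1=b$) yields
\[
\theta_t([b,\infty))\overset{\calL}{\longrightarrow}\tilde{\mathcal{E}}_\infty([b,\infty)),\qquad\forall\,b\in\R,
\]
which is exactly the hypothesis \eqref{conv-for-all-b}. Part~(1) of Theorem~\ref{main-thm} then produces \eqref{result-1}, and Part~(2) of Theorem~\ref{main-thm} converts \eqref{result-1} into the desired $\calL_2$-convergence.

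The point where one sees the specificity to BBM, and what I would expect to be the main conceptual obstacle if one tried to prove the lemma without Theorem~\ref{main-thm}, is precisely the control of the far-left tail in the $d_2$ metric. By Lemma~\ref{conv=real-conv} a generic $\M_2$-valued sequence converges in $\calL_2$ to $\tilde{\mathcal{E}}_\infty$ only if in addition $\<{\alpha_k,\theta_t}\overset{\calL}{\to}\<{\alpha_k,\tilde{\mathcal{E}}_\infty}$ for every $k$, and this cannot in general be extracted from the convergence of the one-sided counts $\theta_t([b,\infty))$ alone. The BBM dynamics fixes this: the quantity \eqref{result-1}, which is a weighted version of the $\<{\alpha_k,\theta}$'s, is automatically produced by the evolution under the critical drift via Theorem~\ref{main-thm}(1), and that is what makes the reduction to tests against \eqref{f-type} legitimate.
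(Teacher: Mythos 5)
Your treatment of the easy implication is fine: to pass from $\theta_t \overset{\calL_2}{\to}\tilde{\mathcal{E}}_\infty$ to convergence against an $f$ of the form~\eqref{f-type}, one can indeed either sandwich each indicator $\mathbf{1}_{[b_k,\infty)}$ between functions in $C_{d_2}^{+}(\R)$ or invoke the continuous mapping theorem, and the required no-fixed-atoms property $\P(\tilde{\mathcal{E}}_\infty(\{b\})>0)=0$ follows, as you say, from the absolutely continuous intensity of the Poisson process in the decorated representation~\eqref{tilde-def}. This matches what the paper calls ``a standard argument''.

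The converse direction, however, is circular. You reduce $\eqref{conv-for-f-type}\Rightarrow\theta_t\overset{\calL_2}{\to}\tilde{\mathcal{E}}_\infty$ to an application of Theorem~\ref{main-thm}: first part~(1) to obtain~\eqref{result-1}, then part~(2) to upgrade to $\calL_2$-convergence. But look at the paper's proof of part~(2) of Theorem~\ref{main-thm}: from~\eqref{result-1} it derives $\<{f,\theta_t}\overset{\calL}{\to}\<{f,\tilde{\mathcal{E}}_\infty}$ for all $f$ of the form~\eqref{f-type}, and then the final step is precisely ``The convergence $\theta_t \overset{\calL_2}{\to} \tilde{\mathcal{E}}_\infty$ follows using Lemma~\ref{m-convergence-automatic}.'' So Theorem~\ref{main-thm}(2) is itself a consumer of Lemma~\ref{m-convergence-automatic}; the lemma must be established first, and it is exactly the ``upgrade'' step you are trying to outsource. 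Your diagnosis at the end of the proposal --- that the real content is controlling the far-left tail in the $d_2$ metric, i.e.\ controlling the $\<{\alpha_k,\theta_t}$'s, and that one needs a BBM-specific input --- is correct; what is missing is that you never supply that input.

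The paper's actual argument for the converse is self-contained and does not pass through Theorem~\ref{main-thm}. It works directly: assuming~\eqref{conv-for-f-type} (equivalently~\eqref{conv-for-all-cont}), one must show $\<{f,\theta_t}\overset{\calL}{\to}\<{f,\tilde{\mathcal{E}}_\infty}$ for every $f\in C_{d_2}^{+}(\R)$, and the only missing ingredient is tightness, uniformly in $t$, of the leftover mass $\int_{-\infty}^0 e^{-\lambda x^2}\theta_t(dx)$. The BBM-specific step is the Markov property: run the dynamics for an extra fixed time $s$, use Lemma~\ref{lower-bound} to compare $e^{-\lambda x^2}\lesssim \P(x+M_s-\sqrt{2}s\ge 0)$, and bound the resulting Bernoulli count by $\theta_{t+s}([0,\infty))$, whose tightness comes from~\eqref{conv-for-f-type} with $f=\mathbf{1}_{[0,\infty)}$; the conditional second moment bound of Lemma~\ref{concentration} (as in Lemma~\ref{tight-from-CGS-BBM}) then yields the desired tightness. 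That is the argument you would need to produce to close the gap.
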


\begin{proof}
It is a standard argument to show that \eqref{conv-for-f-type} holds if and only if 
\begin{equation}\label{conv-for-all-cont}
\<{f,\theta_t} \overset{\calL}{\to} \<{f, \tilde{\mathcal{E}}_\infty} \hspace{2mm} \forall\mbox{$f\in C_b^{+}(\R)$ s.t. $f(x) =0$ eventually as $x\to -\infty$}.
\end{equation}
Thus, using Lemma \ref{conv=real-conv}, the convergence $\theta_t \overset{\calL_2}{\to} \tilde{\mathcal{E}}_\infty$ clearly implies \eqref{conv-for-f-type}. \\
Conversely, let \eqref{conv-for-all-cont} holds. Then, $\theta_t([0,\infty))$ is tight in $t$ since 
$\theta_t([0,\infty)) \overset{\calL}{\to} \tE([0,\infty))$. We also claim that for all $\lambda>0$, 
\begin{equation}\label{lambda-tight}
\int_{-\infty}^{0}e^{-\lambda x^2}\theta_t(dx)
\end{equation}
is tight in $t$. This implies that for all $f\in C_{d_2}^{+}(\R)$, $\<{f,\theta_t}$ is tight. Then, write
\[ \<{f,\theta_t} = \<{f1_{x \geq -K },\theta_t} + \<{f1_{x< -K},\theta_t}.\]
From \eqref{conv-for-all-cont}, $ \<{f1_{x \geq -K },\theta_t} \overset{\calL}{\to} \<{f1_{x \geq -K },\tE}$. Using the tightness of \eqref{lambda-tight}, \[\<{f1_{x< -K},\theta_t} \overset{\P}{\longrightarrow} 0\] uniformly in $t$ as $K\to \infty$. We therefore conclude $\<{f,\theta_t} \overset{\calL}{\to} \<{f,\tE}$ for all $f\in C_{d_2}^{+}(\R)$. It remains to prove to tightness of \eqref{lambda-tight}. Using Lemma \ref{lower-bound}, we can easily check that for all $\lambda >0$, there exists $s$ large enough such that for all $x<0$, $e^{-\lambda x^2} \lesssim \P(x + M_s -\sqrt{2}s \geq 0)$. Therefore, it suffices to prove that for each fixed $s$, 
\begin{equation}\label{fixed-s-tight} 
Y_t := \int_{-\infty}^{\infty} \P(x + M_s -\sqrt{2}s \geq 0) \theta_t(dx)
\end{equation}
is tight in $t$. To this end, write 
\[\mathcal{Y}_t = \sum_{x_i \in \theta_t} 1_{ x_i + M^{i}_s -\sqrt{2}s \geq 0}.\]
Clearly, $Y_t = \E(\mathcal{Y}_t \bigl | \theta_t)$. Using the same argument as in the proof of Lemma \ref{tight-from-CGS-BBM}, \[\P(Y_t \geq K) \leq \frac{4}{K} + \P(\mathcal{Y}_t \geq \frac{K}{2}).\]
The tightness of \eqref{fixed-s-tight} follows by noting $\mathcal{Y}_t \leq \theta_{t+s} ([0,\infty))$ and $\theta_{t+s} ([0,\infty)) \overset{\calL}{\to} \tE([0,\infty))$.
\end{proof}

\subsection{Proof of Theorem \ref{main-thm}.}

We are now ready to prove Theorem \ref{main-thm}. Firstly, suppose that \eqref{result-1} holds. This simply means $R_s \overset{\P}{\longrightarrow} \sqrt{2/\pi}$. We write 
\begin{align*}
\int_0^\infty u_\varphi(r, y+& \sqrt{2}r)e^{\sqrt{2}y}R(r,s,y)dy \\
 =& \int_0^\infty u_\varphi(r, y+ \sqrt{2}r)ye^{\sqrt{2}y}R_s dy + \int_0^\infty u_\varphi(r, y+ \sqrt{2}r)e^{\sqrt{2}y}(R(r,s,y) -yR_s)dy. 
\end{align*}
Using \eqref{error-term} and the definition \eqref{constant-f} of $\Cb(f)$, it easily follows that as $s\to \infty$ and then $r\to \infty$,  \[\int_0^\infty u_\varphi(r, y+ \sqrt{2}r)e^{\sqrt{2}y}R(r,s,y)dy \overset{\P}{\longrightarrow} \Cb(f).\] This clearly implies \eqref{red-4-1}, \eqref{red-4-2}, which in turn implies $\<{f,\theta_t} \overset{\calL}{\to} \<{f,\tilde{\mathcal{E}}_\infty}$ for all functions $f$ of the form \eqref{f-type}. The convergence $\theta_t \overset{\calL_2}{\to} \tE$ follows using Lemma \ref{m-convergence-automatic}. \\ 
\medskip

Conversely, let us now assume that $\theta_t([b,\infty)) \overset{\calL}{\to} \tilde{\mathcal{E}}_\infty([b,\infty))$ for all $b\in \R$. Taking $b=0$ in particular, the Lemma \ref{tight-from-CGS-BBM} implies that $R_s$ is tight. Now, for $f(x) = 1_{[b,\infty)}(x)$ and $\varphi(x) = 1-e^{-f(-x)}$, equations  \eqref{red-4-1}, \eqref{red-4-2} and \eqref{error-term} imply 

\begin{equation}\label{red-5-1}
\lim_{r\to \infty} \limsup_{s\to \infty} \E[\exp\{-R_s\int_0^\infty u_\varphi(r, y+ \sqrt{2}r)ye^{\sqrt{2}y}dy\}]  = e^{-\Cb(f)},
\end{equation}
and 
\begin{equation}\label{red-5-2}
\lim_{r\to \infty} \liminf_{s\to \infty} \E[\exp\{-R_s\int_0^\infty u_\varphi(r, y+ \sqrt{2}r)ye^{\sqrt{2}y}dy\}]  = e^{-\Cb(f)}.
\end{equation}
Furthermore, using the definition \eqref{constant-f} of $\Cb(f)$, \eqref{red-5-1} and \eqref{red-5-2} implies that 
\begin{equation}\label{final-lap-for-R}
\lim_{s\to \infty} \E[\exp\{ - \sqrt{\frac{\pi}{2}} \Cb(f)R_s\}] = e^{-\Cb(f)}.
\end{equation}
Note using \eqref{C-shift} that as $b$ varies over $\R$, the functional $\Cb(f) = \Cb(1_{[b,\infty)}(\cdot))$ takes all possible positive values. Then, the continuity theorem for Laplace transforms  implies $R_s \overset{\P}{\longrightarrow} \sqrt{2/\pi}$ which completes the proof.   \\

\vspace{4mm}

\begin{remark}\label{holomorphic-laplace}
In the above argument we did not require \eqref{conv-for-all-b} to hold for all $b$. The tightness of $R_s$ follows from just one value $b=0$. When $R_s$ is tight, it will have subsequence $R_{s_k} \overset{\calL}{\to} R_\infty$ for some random variable $R_\infty$. Then, \eqref{final-lap-for-R} implies 
\begin{equation}\label{infty-lap} 
\E[\exp\{ - \sqrt{\frac{\pi}{2}} \Cb(f)R_\infty\}] = e^{-\Cb(f)}.
\end{equation}
The equation \eqref{result-1} follows at once if we verify $R_\infty = \sqrt{2/\pi}$. To get this, we only need to obtain \eqref{infty-lap} for all $f\in \mathscr{F}$, where $\mathscr{F}$ is any family of functions such that the set $\{\Cb(f) \bigl | f\in \mathscr{F}\}\subset (0,\infty)$ has at least one limit point. This is because, since $R_\infty$ is non-negative, we can consider the holomorphic function $h(z) = \E[\exp\{-\sqrt{\frac{\pi}{2}} z R_\infty\}]$ defined for $z \in \mathbb{C}_+ := \{x+ iy \bigl | x>0\}$. Note that roots of a holomorphic function are isolated. Therefore, having \eqref{infty-lap} for all $f\in \mathscr{F}$ will force $h(z) = e^{-z}$ for all $z \in \mathbb{C}_+$. This in turn implies $R_\infty = \sqrt{2/\pi}$. The family $\mathscr{F}$ can be chosen as $\mathscr{F} = \{ 1_{[b,\infty)} \bigl | b \in B\}$, where $B$ is any subset of $\R$ with at least one limit point. \\
When $\theta$ is deterministic, then $R_\infty$ is also deterministic. Then, $R_\infty = \sqrt{2/\pi}$ even if \eqref{infty-lap} holds for only one function $f$, e.g. $f = 1_{[0,\infty)}$.

\end{remark}


\medskip

\section{Probabilistic Tauberian theorem and proofs of Theorem \ref{cor-upgrade-thm-1} and \ref{cor-upgrade-thm-2}}\label{s.Tauber}

Our goal in this section is to show that 
Theorem \ref{cor-upgrade-thm-1} and Theorem \ref{cor-upgrade-thm-2} both follow from Theorem \ref{main-thm}. To achieve this, we notice that  a further simplification of the hypothesis ~\eqref{result-1} in Theorem \ref{main-thm} amounts to proving a probabilistic version of the Hardy-Littlewood-Karamata (HLK) Tauberian theorem. Indeed, in the classical deterministic setting, the HLK Tauberian theorem states that if $G:[0,\infty) \to [0,\infty)$ is a monotonic increasing function, then, for some $\rho >0$ and constant $C\ge0$, 
\begin{equation}\label{G-cond-det-1}
\int_0^\infty e^{-\lambda x} dG(x) \sim C\lambda^{-\rho} \hspace{2mm} as \hspace{2mm}  \lambda \to 0+ 
\end{equation}
if and only if 
\begin{equation}\label{G-cond-det-2}
G(x) \sim \frac{C}{\Gamma(\rho +1)}x^\rho  \hspace{2mm}as \hspace{2mm} x\to +\infty.
 \end{equation}

The implication \eqref{G-cond-det-1}$\implies$\eqref{G-cond-det-2} is the Tauberian direction, and the implication \eqref{G-cond-det-2}$\implies$\eqref{G-cond-det-1} is the Abelian direction. We would need a variant of the above result when $G$ is a random monotonic function, and the convergence of \eqref{G-cond-det-1} happens in probability. To the best of our knowledge, such scenarios have not been considered in the literature. We therefore give, in Subsection \ref{s.proof-HLK} below, a self-contained proof to the following mild generalisation. 

\begin{proposition}[Probabilistic HLK Tauberian theorem]\label{HLK-p}
Let $G$ be a random monotonic increasing function, $\rho>0$ and $ C\geq 0$. Then the following are equivalent:
\begin{enumerate}[label=(\roman*)]
\item As $\lambda \to 0+$,
\begin{equation}\label{G-cond-1}
\lambda^\rho \int_0^\infty e^{-\lambda x} dG(x) \overset{\P}{\longrightarrow} C.
\end{equation}
\item As $x\to \infty$,
\begin{equation}\label{G-cond-2}
\frac{G(x)}{x^\rho} \overset{\P}{\longrightarrow} \frac{C}{\Gamma(\rho + 1)}
\end{equation}
and \[\lambda^\rho \int_0^\infty e^{-\lambda x} dG(x) \textrm{  is tight in $\lambda \in (0,1)$}.\] 
 
\end{enumerate}

\end{proposition}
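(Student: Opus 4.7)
The plan is to introduce the scaled monotone functions $G_x(y) := G(xy)/x^\rho$, so that via the substitution $z = xy$ with $\lambda = 1/x$ one finds
\[
Y_\lambda := \lambda^\rho \int_0^\infty e^{-\lambda z}\, dG(z) = \int_0^\infty e^{-y}\, dG_x(y),
\]
and more generally $Y_{\nu/x} = \nu^\rho \int_0^\infty e^{-\nu y}\, dG_x(y)$ for $\nu > 0$. Under this rescaling, the condition $G(x)/x^\rho \overset{\P}{\longrightarrow} C/\Gamma(\rho+1)$ becomes the pointwise statement $G_x(y) \overset{\P}{\longrightarrow} G_\infty(y) := C y^\rho/\Gamma(\rho+1)$ for every $y > 0$, while (i) reads as $\int_0^\infty e^{-y}\, dG_x(y) \overset{\P}{\longrightarrow} C$ as $x \to \infty$. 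Throughout I will assume $G(0) = 0$ without loss of generality, and will use the subsequence principle to reduce convergence-in-probability statements to almost sure ones along subsequences, combined with diagonal extraction over a countable dense set of test points. The uniform tail control comes from the deterministic estimate
\[
\int_N^\infty e^{-y}\, dG_x(y) \leq e^{-N/2} \int_0^\infty e^{-y/2}\, dG_x(y) = 2^\rho\, e^{-N/2}\, Y_{1/(2x)},
\]
which together with the tightness of $\{Y_\lambda\}$ (assumed in (ii), automatic in (i)) gives $\sup_x \P\bigl(\int_N^\infty e^{-y}\, dG_x(y) > \epsilon\bigr) \to 0$ as $N \to \infty$, for every $\epsilon > 0$.

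For the direction (ii) $\Rightarrow$ (i), given an arbitrary sequence $x_n \to \infty$, I will apply the hypothesis at the scaled points $\{q x_n\}$ for each rational $q > 0$ and extract by diagonalization a subsequence $x_{n_k}$ along which $G_{x_{n_k}}(q) \to G_\infty(q)$ almost surely for every positive rational $q$. Monotonicity of $G_{x_{n_k}}$ and continuity of $G_\infty$ will then upgrade this to pointwise, indeed locally uniform (Polya--Dini) convergence on $(0, \infty)$ almost surely. Bounded convergence on $[0,N]$ together with an integration by parts yields $\int_0^N e^{-y}\, dG_{x_{n_k}}(y) \to \int_0^N e^{-y}\, dG_\infty(y)$ almost surely for each fixed $N$, and combining with the tail estimate above gives $Y_{1/x_{n_k}} \to C$ in probability along the subsequence, hence unconditionally by the subsequence principle.

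For the direction (i) $\Rightarrow$ (ii), tightness of $\{Y_\lambda\}$ is automatic. The scaling identity shows that $\nu^\rho \int_0^\infty e^{-\nu y}\, dG_x(y) = Y_{\nu/x} \overset{\P}{\longrightarrow} C$ as $x \to \infty$ for every fixed $\nu > 0$, which is to say that the Laplace transform of the finite positive measure $\mu_x(dy) := e^{-y}\, dG_x(y)$ at $s = \nu - 1 > -1$ converges in probability to $C/\nu^\rho$. A diagonalization over a countable dense set in $(-1, \infty)$, together with the additional extraction $Y_{1/(2x_{n_k})} \to C$ almost surely (using (i) and the subsequence principle), produces a subsequence along which these Laplace transforms converge almost surely and along which the tail estimate makes $\{\mu_{x_{n_k}}\}$ almost surely tight as finite measures. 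Helly--Prokhorov then provides a further weakly convergent sub-subsequence, and uniqueness of Laplace transforms of finite measures forces the limit to be $\mu_\infty(dy) := e^{-y}\, dG_\infty(y)$. Applying the Portmanteau theorem to the bounded function $e^y \mathbf{1}_{[0,1]}$ (noting that $\mu_\infty(\{1\}) = 0$) gives $G_{x_{n_k}}(1) = \int e^y \mathbf{1}_{[0,1]}\, d\mu_{x_{n_k}} \to C/\Gamma(\rho+1)$ almost surely, and one more application of the subsequence principle concludes.

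The principal difficulty is the passage from a probabilistic Laplace-transform convergence to a pointwise statement about $G$, since the classical Karamata--HLK theorem cannot be applied pointwise in $\omega$. This is overcome by exploiting monotonicity of $G$: convergence of the scaled functions on a countable dense set automatically extends to pointwise convergence everywhere because the limit $G_\infty$ is continuous. The tightness hypothesis in (ii) enters only through the quantitative tail bound displayed above, which is precisely what permits the exchange of integration and limit in both directions.
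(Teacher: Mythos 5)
Your proposal is correct, and it takes a genuinely different route from the paper's. The paper works throughout at the level of random measures: it forms the scaled random measures $dG_\lambda(x) = \lambda^\rho\, dG(x/\lambda)$, invokes tightness of this family with respect to \emph{vague convergence in distribution}, extracts a distributionally convergent subsequence, identifies the random limit measure as the deterministic $C x^\rho/\Gamma(\rho+1)$ via Laplace transform uniqueness, and then passes from vague convergence in distribution of random measures to convergence of $G_\lambda(1)$ in distribution (hence in probability) by appealing to a theorem of Kallenberg ([Theorem 4.5, \cite{kallenberg74}]). You instead work $\omega$-pathwise along almost-surely convergent subsequences: you reduce everything via the subsequence principle for convergence in probability, diagonalize over countable dense sets of scales, use monotonicity of $G$ plus continuity of the limit $y\mapsto Cy^\rho/\Gamma(\rho+1)$ to bootstrap from countable-set convergence to pointwise convergence, and then apply the purely deterministic machinery (bounded convergence via integration by parts in one direction, Helly/Prokhorov plus classical Laplace continuity plus the Portmanteau theorem in the other) to each fixed $\omega$ in a full-measure set. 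Both approaches use the same tail estimate $\int_K^\infty e^{-\mu x}\,dG_\lambda \le e^{-\mu K/2}\int_0^\infty e^{-\mu x/2}\,dG_\lambda$ to control the truncation error, and both go through Laplace transform uniqueness to identify the limit. The trade-off is that your argument is more elementary and self-contained, dispensing with the external random-measure reference at the cost of the repeated a.s.-subsequence extraction and the explicit monotonicity upgrade; the paper's route is more concise once the random measure toolkit is granted. Two small remarks: the mention of a dense set in $(-1,\infty)$ is superfluous, since $e^{-sy}$ is unbounded for $s<0$ and weak convergence only passes through test values $s\ge 0$, which already suffice for Laplace uniqueness; and in the Helly--Prokhorov step one should note that because the limit measure is deterministic and is the unique limit of every weakly convergent sub-subsequence, the \emph{entire} subsequence $\mu_{x_{n_k}}$ converges weakly a.s.\ (no further $\omega$-dependent extraction is needed), which is what makes the subsequent Portmanteau argument and final application of the subsequence principle go through cleanly.
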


\begin{remark}\label{boundedness-redundant}
When $G$ is deterministic and \eqref{G-cond-det-2} holds, it automatically follows that $\sup_{x\geq 1}\frac{G(x)}{x^{\rho}} <\infty$. Then, using the integration by parts, 
\begin{align*} \lambda^\rho \int_0^\infty e^{-\lambda x} dG(x) & = \lambda^{\rho+1} \int_0^\infty G(x)e^{-\lambda x}dx \\
&= \int_0^\infty \lambda^{\rho} G(x/\lambda)e^{-x}dx \\
&=  \int_0^\lambda \lambda^{\rho} G(x/\lambda)e^{-x}dx +  \int_\lambda^\infty \lambda^{\rho} G(x/\lambda)e^{-x}dx \\
& \leq \lambda^{\rho} G(1)(1- e^{-\lambda}) + \sup_{x \geq 1}\frac{G(x)}{x^\rho} \int_\lambda^\infty x^\rho e^{-x}dx.
\end{align*}
This implies that $\lambda^\rho \int_0^\infty e^{-\lambda x} dG(x)$ is bounded for $\lambda \in (0,1)$. Therefore, when $G$ is deterministic and \eqref{G-cond-det-2} holds, one does not need to additionally assume the boundedness of $\lambda^\rho \int_0^\infty e^{-\lambda x} dG(x)$ for the Abelian direction to hold. However, when $G$ is random and \eqref{G-cond-2} holds, it is possible that $\sup_{x\geq 1}\frac{G(x)}{x^{\rho}} = + \infty$ with positive probability. We therefore have to additionally assume the tightness of $\lambda^\rho \int_0^\infty e^{-\lambda x} dG(x)$ for the Abelian direction to hold. 
\end{remark}
\vspace{4mm}

\subsection{Proof of Theorem \ref{cor-upgrade-thm-2}.}  $ $

We can now use Proposition \ref{HLK-p} to simplify \eqref{result-1}. Recall $\widehat{\theta}$ defined by $\widehat{\theta}(A) = \theta(-A)$. The condition \eqref{result-1} can be then written as  
\begin{equation}\label{result-1-1}
\frac{1}{t^{\frac{3}{2}}}\int_{0}^\infty xe^{-\sqrt{2}x} e^{-\frac{x^2}{2t}}\widehat{\theta}(dx) \overset{\P}{\longrightarrow} 1.
\end{equation} 

Write 
\begin{equation}\label{def-integral-F}
F(y) := \int_0^y xe^{-\sqrt{2}x}\widehat{\theta}(dx).
\end{equation}
The equation \eqref{result-1-1} further translates to 
 
\begin{equation}\label{result-1-2}
\frac{1}{t^{\frac{3}{2}}}\int_{0}^\infty e^{-\frac{x}{2t}}dF(\sqrt{x}) \overset{\P}{\longrightarrow} 1.
\end{equation}

\vspace{5mm}

Then, the Proposition \ref{HLK-p} applied to \eqref{result-1-2} with $\lambda = 1/t$ implies Theorem \ref{cor-upgrade-thm-2} from Theorem \ref{main-thm}. \\ \\

\subsection{Proof of Theorem \ref{cor-upgrade-thm-1}.} $ $

When $\theta$ is deterministic, \eqref{cubic-rate} simply means 
\begin{equation}\label{normal-cubic-rate} 
\frac{1}{y^3} \int_0^y xe^{-\sqrt{2}x}\widehat{\theta}(dx) \to  \frac{1}{3}\sqrt{\frac{2}{\pi}} \hspace{4mm} as \hspace{2mm} y \to \infty.
\end{equation}
Also, as explained in Remark \ref{boundedness-redundant}, the boundedness of \eqref{tight-object} follows automatically from \eqref{normal-cubic-rate}. Also recall the following standard result from the theory of regularly varying functions, see \cite{bingham-book} for details: If $G$ is a monotonic increasing function, then for $\rho> 0$, $\alpha >-\rho$ and constant $C$,  
\begin{equation}\label{power-exchange-1}
G(x) \sim C x^{\rho} \hspace{2mm}as \hspace{2mm} x \to +\infty
\end{equation}
if and only if 
\begin{equation}\label{power-exchange-2}
\int_1^y x^{\alpha}dG(x) \sim \frac{C\rho}{\rho + \alpha} y^{\rho + \alpha} \hspace{2mm}as \hspace{2mm} y \to +\infty.
\end{equation}

Applying the above result to $G(x) = \int_0^y xe^{-\sqrt{2}x}\widehat{\theta}(dx)$, $\rho =3$, $\alpha =-2$ implies \eqref{result-2} from \eqref{normal-cubic-rate}. This proves Theorem \ref{cor-upgrade-thm-1} from Theorem \ref{main-thm}.

\subsection{Proof of the probabilistic HLK Tauberian theorem (Proposition \ref{HLK-p}).}\label{s.proof-HLK}

We first prove that \eqref{G-cond-1} implies \eqref{G-cond-2}. To this end, note using integration by parts and monotonicity of $G$ that
\begin{align*}
\lambda^\rho \int_0^\infty e^{-\lambda x} dG(x) &= \lambda^{\rho + 1}\int_0^\infty G(x) e^{-\lambda x} dx  \\
&= \lambda^\rho \int_0^\infty G(x/\lambda) e^{-x} dx  \\
& \geq \lambda^\rho \int_1^2 G(x/\lambda) e^{-x} dx \\
& \geq \lambda^\rho G(1/\lambda) \int_1^2e^{-x} dx.
\end{align*}

It therefore follows that $\lambda^\rho G(1/\lambda)$ is tight in $\lambda >0$, which simply means that $G(x)/x^\rho$ is tight in $x >0$. Now, let $\mu >0$ be a fixed number. Then, by replacing $\lambda$ by $\lambda \mu$, \eqref{G-cond-1} implies that as $\lambda \to 0+$,
\begin{equation}\label{G-proof-1} 
\int_0^\infty e^{-\mu x} dG_\lambda(x) \overset{\P}{\longrightarrow} \frac{C}{\mu^\rho},
\end{equation}   
where $G_\lambda(x) = \lambda^\rho G(x/\lambda)$. We consider $\{dG_\lambda(\cdot)\}_{\lambda >0}$ as a family of random measures. Note that the tightness of $G(x)/x^\rho$ in $x$ implies that for each finite $K$, $G_\lambda(K)$ is tight in $\lambda$. It then follows that the family of measures $\{dG_\lambda(\cdot)\}_{\lambda >0}$ is tight w.r.t. the topology of vague convergence in distribution, see e.g. [Lemma 2.20, \cite{bovier-book}]. More precisely, this means that there exists a random measure $dG_0(x)$ and a subsequence $\lambda_n \to 0+ $ such that for all compactly supported continuous function $f$
\[\int_0^\infty f(x)dG_n(x) \overset{\calL}{\to} \int_0^\infty f(x)dG_0(x),\]
where $dG_n(x) = dG_{\lambda_n}(x)$. This in particular implies that for all $K>0$, 
\[ \int_0^K e^{-\mu x}dG_n(x) \overset{\calL}{\to} \int_0^K e^{-\mu x }dG_0(x).\]
Next, note that 
\[\int_K^\infty e^{-\mu x}dG_n(x) \leq e^{-\mu K/2} \int_K^\infty e^{-\mu x/2} dG_n(x) \leq e^{-\mu K/2} \int_0^\infty e^{-\mu x/2} dG_n(x). \]
Also, by \eqref{G-proof-1}, as $n\to \infty$, 
\begin{equation}\label{G-proof-2}
\int_0^\infty e^{-\mu x/2} dG_n(x) \overset{\P}{\longrightarrow} \frac{2^\rho C}{\mu^\rho}.
\end{equation}
In particular, $\int_0^\infty e^{-\mu x/2} dG_n(x)$ is tight in $n$. It then follows that  
\begin{align*}
\P[| \int_0^K &e^{-\mu x}dG_0(x) - \frac{C}{\mu^\rho}| \geq \epsilon] \\
&= \lim_{n\to \infty} \P[| \int_0^K e^{-\mu x}dG_n(x) - \frac{C}{\mu^\rho}| \geq \epsilon] \\
& = \lim_{n\to \infty} \P[| \int_0^\infty e^{-\mu x}dG_n(x) - \frac{C}{\mu^\rho} - \int_K^\infty e^{-\mu x}dG_n(x) | \geq \epsilon] \\
&\leq \lim_{n\to \infty} \P[| \int_0^\infty e^{-\mu x}dG_n(x) - \frac{C}{\mu^\rho}| \geq \epsilon/2] + \limsup_{n\to \infty} \P[\int_K^\infty e^{-\mu x}dG_n(x) \geq \epsilon/2] \\
&\leq \limsup_{n\to \infty} \P[\int_0^\infty e^{-\mu x/2}dG_n(x) \geq e^{\mu K/2}\epsilon/2].
\end{align*}
Since $\int_0^\infty e^{-\mu x/2} dG_n(x)$ is tight in $n$, taking $K\to \infty$ in the above, we obtain 
\[ \lim_{K\to \infty}  \P[| \int_0^K e^{-\mu x}dG_0(x) - \frac{C}{\mu^\rho}| \geq \epsilon] =0.\]

This simply means that 
\[ \int_0^\infty e^{-\mu x}dG_0(x) =  \frac{C}{\mu^\rho} \hspace{2mm} \mbox{a.s..}\]

Since Laplace transforms uniquely determine measures, it implies that $G_0(x) = Cx^\rho/\Gamma(\rho + 1)$. This in turn implies that for all compactly supported continuous function $f$
\[\int_0^\infty f(x)dG_n(x) \overset{\calL}{\to}  C/\Gamma(\rho + 1)\int_0^\infty f(x)d x^\rho.\]

Also note that the above is true for any subsequence $\lambda_n \to 0+$ with the same $G_0(x) = Cx^\rho/\Gamma(\rho + 1)$. This implies that measures $dG_\lambda(x)$ converges vaguely in distribution to $C/\Gamma(\rho + 1)dx^\rho$. Using standard results from the theory of random measures, it implies that $G_\lambda(1) \overset{\calL}{\to} C/\Gamma(\rho +1)$, see [Theorem 4.5, \cite{kallenberg74}] for details. Note that convergence in distribution to a deterministic constant simply means the convergence in probability $G_\lambda(1) \overset{\P}{\longrightarrow} C/\Gamma(\rho +1)$ which means that \eqref{G-cond-2} holds. \\

To prove that \eqref{G-cond-2} together with the tightness of $\lambda^\rho \int_0^\infty e^{-\lambda x} dG(x)$  implies \eqref{G-cond-1}, one can note using [Theorem 4.5, \cite{kallenberg74}] that \eqref{G-cond-2} implies that $dG_\lambda(x)$ converges vaguely in distribution to $C/\Gamma(\rho + 1)dx^\rho$. Therefore, for any $K$, as $\lambda \to 0+$
\[ \int_0^K e^{-\mu x}dG_\lambda(x) \overset{\P}{\longrightarrow} C/\Gamma(\rho + 1)\int_0^K e^{-\mu x }dx^\rho.\]

Also, using the tightness of $\lambda^\rho \int_0^\infty e^{-\lambda x} dG(x)$, it follows similarly as above that as $\lambda \to 0+$ and then $K\to \infty$, 
\[
\int_K^\infty e^{-\mu x}dG_\lambda(x) \overset{\P}{\longrightarrow} 0. 
\]
Therefore, 
\[ \int_0^\infty e^{-\mu x}dG_\lambda(x) \overset{\P}{\longrightarrow} C/\Gamma(\rho + 1)\int_0^\infty e^{-\mu x }dx^\rho, \]
which implies \eqref{G-cond-1} by simply choosing $\mu =1$.


\section{Applications and further discussion}\label{s.consequences}

\subsection{Proof of application 1 (Corollary \ref{c.1}).} 

Let us start by pointing out that Theorem \ref{cor-upgrade-thm-1} implies in a straightforward way the following a.s. version:
\begin{corollary}\label{cor-1}
If $\theta$ is a $\M_2$-valued point process such that \eqref{result-2} holds a.s., then $\theta_t \overset{\calL_2}{\to}\tilde{\mathcal{E}}_\infty$.
\end{corollary}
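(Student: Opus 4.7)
The plan is to deduce this corollary directly from Theorem \ref{cor-upgrade-thm-1} by a quenched-to-annealed argument that uses only the fact that $\calP_t F$ is bounded (in $\eta$) and measurable, together with dominated convergence. Concretely, fix any bounded continuous test function $F : \M_2 \to \R$, and recall the semigroup $\calP_t F(\eta) = \E[F(\theta_t) \mid \theta_0 = \eta]$ from \eqref{define-semigroup}.

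First, by hypothesis there is a measurable set $\Omega_0 \subset \M_2$ with $\mu_\theta(\Omega_0) = 1$ such that every $\eta \in \Omega_0$ satisfies the Ces\`aro condition \eqref{result-2}. Applying Theorem \ref{cor-upgrade-thm-1} to the deterministic initial configuration $\eta$, we get, for each such $\eta$, the convergence $\theta_t \overset{\calL_2}{\to}\tilde{\mathcal{E}}_\infty$ under $\P(\,\cdot\mid\theta_0=\eta)$. Translated in terms of bounded continuous functions on $\M_2$, this means
\begin{equation*}
\calP_t F(\eta) \xrightarrow[t\to\infty]{} \E\bigl[F(\tilde{\mathcal{E}}_\infty)\bigr] \qquad \text{for } \mu_\theta\text{-a.e. } \eta \in \M_2.
\end{equation*}

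Second, for the random initial configuration we write
\begin{equation*}
\E\bigl[F(\theta_t)\bigr] \;=\; \E\bigl[\calP_t F(\theta_0)\bigr] \;=\; \int_{\M_2} \calP_t F(\eta)\, d\mu_\theta(\eta).
\end{equation*}
The function $\eta \mapsto \calP_t F(\eta)$ is measurable (indeed continuous, by the Feller property established in Theorem \ref{main-thm-feller}) and is uniformly bounded by $\|F\|_\infty$. Since the integrand converges pointwise $\mu_\theta$-a.s. to the constant $\E[F(\tilde{\mathcal{E}}_\infty)]$, the dominated convergence theorem yields
\begin{equation*}
\E\bigl[F(\theta_t)\bigr] \xrightarrow[t\to\infty]{} \E\bigl[F(\tilde{\mathcal{E}}_\infty)\bigr].
\end{equation*}

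As $F$ was an arbitrary bounded continuous function on $\M_2$, this is precisely the convergence $\theta_t \overset{\calL_2}{\to}\tilde{\mathcal{E}}_\infty$ in the sense of Lemma \ref{conv=real-conv} (applied to the characterisation via Laplace functionals, or directly via weak convergence of $\mu_{\theta_t}$ to $\tilde{\Lmu}_\infty$ in $\scrP(\M_2)$). The argument has essentially no obstacle, since the deterministic case of Theorem \ref{cor-upgrade-thm-1} already does the heavy lifting; the only subtle point to check is that the a.s.\ event on which \eqref{result-2} holds is $\calB_2$-measurable in $\eta$, which is immediate because the map $\eta \mapsto \frac{1}{y}\int_1^y \frac{\widehat\eta(dx)}{xe^{\sqrt 2 x}}$ is, for each fixed $y$, measurable in $\eta\in\M_2$.
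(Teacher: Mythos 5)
Your argument is correct and is, as far as one can tell, exactly the ``straightforward'' quenched-to-annealed deduction the paper has in mind (the paper gives no written proof, merely asserting the implication). Two small observations: first, the Feller property of Theorem~\ref{main-thm-feller} is not actually needed here --- dominated convergence only requires that $\eta \mapsto \calP_t F(\eta)$ be Borel measurable and bounded by $\|F\|_\infty$, both of which hold directly from the definition~\eqref{define-semigroup} of the semigroup; citing Feller continuity is harmless but stronger than necessary. Second, the final appeal to Lemma~\ref{conv=real-conv} is superfluous: once you have shown $\E[F(\theta_t)] \to \E[F(\tilde{\mathcal{E}}_\infty)]$ for \emph{every} bounded continuous $F$ on $\M_2$, you already have weak convergence of $\mu_{\theta_t}$ to $\tilde{\Lmu}_\infty$ in $\scrP(\M_2)$ by the very definition of $\overset{\calL_2}{\to}$; Lemma~\ref{conv=real-conv} would only be needed if you had restricted yourself to the smaller class of test functions $\eta\mapsto e^{-\lambda\<{f,\eta}}$ with $f\in C_{d_2}^{+}(\R)$. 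Your remark on the measurability of the a.s.\ event is the right subtlety to flag and is handled correctly.
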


Since the condition \eqref{result-2} is a convergence in the Ces\`aro sense, it can be readily verified by various point processes almost surely. This is the purpose of Corollary \ref{c.1} which analyses the specific case where $\theta$ is a quenched realisation of PPP$(\sqrt{\frac{2}{\pi}}(-x)(1+\alpha \cos(|x|^\beta))e^{-\sqrt{2}x}1_{x<0}dx)$ for any fixed $\alpha\in [0,1]$ and $\beta\in(0,1]$.

\medskip

\ni
{\em Proof of Corollary \ref{c.1}.} 

Let $\alpha\in [0,1]$ and $\beta\in(0,1]$ be fixed. 
We verify that \eqref{result-2} holds a.s. when $\theta$ is PPP$(\sqrt{\frac{2}{\pi}}(-x)(1+\alpha \cos(|x|^\beta)) e^{-\sqrt{2}x}1_{x<0}dx)$. Consider 
\begin{equation}\label{krock-integral} 
N_t:= \int_1^{t} \frac{\widehat{\theta}(dx) - \sqrt{\frac{2}{\pi}}x (1+\alpha \cos(|x|^\beta)) e^{\sqrt{2}x}dx}{x^2e^{\sqrt{2}x}}.
\end{equation}
Note that $N_t$ is a martingale. Using properties of a Poisson point process, it can be easily checked that 
\[ \E\biggl(\int_1^{\infty} \frac{\widehat{\theta}(dx) - \sqrt{\frac{2}{\pi}}x (1+\alpha \cos(|x|^\beta)) e^{\sqrt{2}x}dx}{x^2e^{\sqrt{2}x}}\biggr)^2 = \sqrt{\frac{2}{\pi}}\int_1^{\infty} \frac{x (1+\alpha \cos(|x|^\beta)) e^{\sqrt{2}x}dx}{x^4e^{2\sqrt{2}x}} <\infty.\]
Therefore, using martingale convergence theorem, $N_{\infty}:= \lim_{t\to \infty} N_t$ exists almost surely.  Then, we conclude from Kronecker's lemma that \eqref{result-2} holds a.s. (we use here that for any $\beta\in(0,1]$, $\int_1^t  \cos(|x|^\beta) dx = o(t)$ which follows easily from Riemann-Lebesgue lemma).
\qed

\vspace{3mm}

\subsection{Discussion on the link betweem Corollary \ref{cor-E} and \cite{lisa-paper}.}\label{cant-change-power-x}

Recall that Corollary \ref{cor-E} states that as $y \to-\infty$, 
 
\begin{equation}\label{result-for-E-2-bis}
\frac{-1}{y^3} \int_{y}^0 (-x)e^{\sqrt{2}x}\tilde{\mathcal{E}}_\infty(dx)  \overset{\P}{\longrightarrow} \frac{1}{3}\sqrt{\frac{2}{\pi}}.
\end{equation}

This estimate should be compared with the earlier mentioned estimate \eqref{lisa-result-with-constant} from \cite{lisa-paper}. Since \eqref{result-for-E-2-bis} is a Ces\'aro type convergence, it may seem to be weaker than the result \eqref{lisa-result-with-constant} of \cite{lisa-paper}. But, we emphasise that since these convergences are in probability, the result \eqref{lisa-result-with-constant} neither implies nor it is implied by \eqref{result-for-E-2-bis}\footnote{To the best of our efforts, we could not derive \eqref{result-for-E-2-bis} directly from results of \cite{lisa-paper}. Techniques of \cite{lisa-paper} can possibly be refined to obtain a new proof of \eqref{result-for-E-2-bis}. However, our proof is completely different from that of \cite{lisa-paper} and it is worth recording it here.}. To see this more clearly, using integration by parts (similarly as in \eqref{IBP}), note that
 
\begin{equation}
\frac{-1}{y^3}\int_{y}^0 (-x)e^{\sqrt{2}x}\tilde{\mathcal{E}}_\infty(dx) = \frac{\tilde{\mathcal{E}}_\infty([y,0])}{y^2 e^{-\sqrt{2}y}}
-\frac{1}{y^3}\int_{y}^0 \frac{\tilde{\mathcal{E}}_\infty([x,0])}{(-x)e^{-\sqrt{2}x}}(\sqrt{2}x^2 + x) dx.
\end{equation}

Using \eqref{lisa-result-with-constant}, \[\frac{\tilde{\mathcal{E}}_\infty([y,0])}{y^2 e^{-\sqrt{2}y}} \overset{\P}{\longrightarrow} 0 \hspace{2mm} as \hspace{2mm} y\to -\infty.\] 
Therefore, \eqref{result-for-E-2-bis} holds if and only if 

\begin{equation}\label{result-E-3}
\frac{-1}{y^3} \int_{y}^0 \frac{\tilde{\mathcal{E}}_\infty([x,0])}{(-x)e^{-\sqrt{2}x}}(\sqrt{2}x^2 + x) dx \overset{\P}{\longrightarrow} \frac{1}{3}\sqrt{\frac{2}{\pi}}.
\end{equation}

Note that if $X_t$ is a stochastic process such that $X_t \overset{\P}{\longrightarrow} 1$ as $t\to \infty$, then it does not necessarily imply $\frac{1}{T}\int_0^T X_tdt  \overset{\P}{\longrightarrow} 1$ as $T\to \infty$. Therefore, \eqref{result-E-3} does not directly follow from \eqref{lisa-result-with-constant}. Indeed, while the equation \eqref{lisa-result-with-constant} is only dependent on $\tilde{\mathcal{E}}_\infty([x,0])$, the equation \eqref{result-E-3} also takes into account the correlations of the process $\{\tilde{\mathcal{E}}_\infty([x,0])\}_{x \in (-\infty,0]}$ (however, the constant $1/\sqrt{\pi}$ is consistent with \eqref{result-E-3} and \eqref{lisa-result-with-constant}). For this same reason, the equivalence between \eqref{power-exchange-1} and \eqref{power-exchange-2} is no longer true if the asymptotic relation is in probability. In particular, we don't know if a variant of \eqref{result-2} holds for $\tilde{\mathcal{E}}_\infty$. We formulate it as an open problem which is very similar in spirit to Question \ref{op2}.

\begin{question}\label{open-problem-1}
Does the extremal process $\tilde{\mathcal{E}}_\infty$ satisfy
\[
\frac{-1}{y} \int_{y}^0 \frac{\tilde{\mathcal{E}}_\infty(dx)}{(-x)e^{-\sqrt{2}x}} \overset{\P}{\longrightarrow} \sqrt{\frac{2}{\pi}} \hspace{2mm}as \hspace{2mm} y\to -\infty \hspace{1mm}?
\]

\end{question}

\vspace{3mm}

Applications such as Corollary \ref{cor-E} to the structure of the extremal process was in fact one of the motivations behind this article. We realised that studying the extremal process $\tilde{\mathcal{E}}_\infty$ through the lens of the Markov process $\theta_t$ yields information about the structure of $\tilde{\mathcal{E}}_\infty$ itself. Indeed, we know from \cite{CGS-BBM} that $\tilde{\mathcal{E}}_\infty$ is uniquely characterised (up to a shift) from its invariance property under $\theta_t$. In other words, the invariance property contains all the information that is needed to know about $\tilde{\mathcal{E}}_\infty$\footnote{This is analogous to extracting information about a function through the functional equation it satisfies. The philosophical message behind is that one can use the underlying symmetry to derive structural properties of $\tilde{\mathcal{E}}_\infty$.}. The Corollary \ref{cor-E} is one such result supporting this theme. Note that while deriving Corollary \ref{cor-E}, we have not utilised the invariance of $\tilde{\mathcal{E}}_\infty$ to its full extent. Rather, we only used that $\tilde{\mathcal{E}}_\infty$ is in the domain of attraction of itself. We believe that using the invariance of $\tilde{\mathcal{E}}_\infty$ to its full extent may potentially yield more interesting properties about $\tilde{\mathcal{E}}_\infty$. 

\subsection{A sufficient criterion for the tightness condition in Theorem \ref{cor-upgrade-thm-2}.}

The purpose of this subsection is to stress that  the second condition in Theorem \ref{cor-upgrade-thm-2}, i.e. the tightness hypothesis
~\eqref{tight-object}  is often easier to check in practice than the first one. The point of the following Proposition is that it involves the same random variable as the one relevant in the first condition of Theorem \ref{cor-upgrade-thm-2}.

\begin{proposition}\label{pr.checkT}
If for some $\alpha \in (0,1)$, one has 
\begin{equation}\label{moment=tightness} 
\limsup_{y\to \infty} \E\biggl(\frac{1}{y^3} \int_0^y xe^{-\sqrt{2}x}\widehat{\theta}(dx) \biggr)^\alpha <\infty\,,
\end{equation}
then the tightness of \eqref{tight-object} in Theorem \ref{cor-upgrade-thm-2} holds for $\lambda \in (0,1)$. 
\end{proposition}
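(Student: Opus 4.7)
The plan is to establish the uniform bound $\sup_{\lambda \in (0,1)} \E[T_\lambda^\alpha] < \infty$, where $T_\lambda$ denotes the quantity in~\eqref{tight-object}; tightness will then follow immediately from Markov's inequality $\P(T_\lambda > K) \leq K^{-\alpha}\E[T_\lambda^\alpha]$. To bring the hypothesis~\eqref{moment=tightness} into play, I would first recast $T_\lambda$ in terms of $F(y) := \int_0^y x e^{-\sqrt{2}x}\widehat{\theta}(dx)$ and $Y_y := F(y)/y^3$. Integration by parts (the boundary term vanishes since $F(x) \leq x\,\widehat{\theta}([0,x])$ and the $\M_2$-condition ensures $\widehat{\theta}([0,x]) \leq e^{\epsilon x^2}$ eventually for arbitrarily small $\epsilon > 0$), followed by the substitution $y = \sqrt{\lambda}\,x$, yields
\[
T_\lambda \;=\; 2\lambda^{3/2}\int_0^\infty F(y/\sqrt{\lambda})\, y\, e^{-y^2}\,dy \;=\; 2\int_0^\infty Y_{y/\sqrt{\lambda}}\, y^4 e^{-y^2}\,dy.
\]

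Hypothesis~\eqref{moment=tightness} supplies constants $Y^*, M < \infty$ with $\sup_{y\geq Y^*} \E[Y_y^\alpha] \leq M$. Setting $\lambda_0 := 1/(Y^*)^2$, I would treat the regimes $\lambda \in [\lambda_0, 1)$ and $\lambda \in (0, \lambda_0)$ separately. For $\lambda \in [\lambda_0, 1)$, the bounds $\lambda^{3/2}\leq 1$ and $e^{-\lambda x^2} \leq e^{-\lambda_0 x^2}$ dominate $T_\lambda$ by the single random variable $Z := \int_0^\infty x e^{-\sqrt{2}x} e^{-\lambda_0 x^2}\widehat{\theta}(dx)$, which is a.s.\ finite thanks to~\eqref{finite-integral}; this gives tightness of $\{T_\lambda\}_{\lambda \in [\lambda_0, 1)}$ for free.

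For $\lambda \in (0, \lambda_0)$, I would decompose the $y$-integral along the unit intervals $[k,k+1]$. Monotonicity of $F$ gives $\sup_{y \in [k,k+1]} Y_{y/\sqrt{\lambda}} \leq \bigl(\tfrac{k+1}{k}\bigr)^3 Y_{(k+1)/\sqrt{\lambda}} \leq 8\,Y_{(k+1)/\sqrt{\lambda}}$ for $k \geq 1$; on $[0,1]$, the singular factor $y^{-3}$ prevents a naive $\sup$-bound (this is the only subtle point), but applying monotonicity directly to $F$ yields $\int_0^1 Y_{y/\sqrt{\lambda}} y^4 e^{-y^2}\,dy = \lambda^{3/2}\int_0^1 F(y/\sqrt{\lambda})\, y\, e^{-y^2}\,dy \leq Y_{1/\sqrt{\lambda}}/2$. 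With $a_k := \int_k^{k+1} y^4 e^{-y^2}\,dy$, these bounds combine into
\[
T_\lambda \;\leq\; Y_{1/\sqrt{\lambda}} \;+\; 16\sum_{k \geq 1} Y_{(k+1)/\sqrt{\lambda}}\, a_k.
\]
The subadditivity $(\sum_n x_n)^\alpha \leq \sum_n x_n^\alpha$ valid for $\alpha \in (0,1)$ and $x_n \geq 0$, together with $1/\sqrt{\lambda} \geq Y^*$ in this regime, then yields
\[
\E[T_\lambda^\alpha] \;\leq\; M \;+\; 16^\alpha M \sum_{k \geq 1} a_k^\alpha \;<\; \infty,
\]
uniformly in $\lambda \in (0,\lambda_0)$, since $a_k^\alpha$ decays like $k^{4\alpha}e^{-\alpha k^2}$. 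Combining the two regimes gives $\sup_{\lambda \in (0,1)} \E[T_\lambda^\alpha] < \infty$ and hence the claimed tightness of~\eqref{tight-object}.
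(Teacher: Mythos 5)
Your proof is correct and follows essentially the same route as the paper's: rewrite $T_\lambda$ via integration by parts and the change of variable $x\mapsto x/\lambda$, decompose the resulting exponential-weighted integral into unit intervals, use the monotonicity of $F$ to bound each piece by a value at the right endpoint, and conclude with the subadditivity $(\sum_n x_n)^\alpha \le \sum_n x_n^\alpha$ for $\alpha\in(0,1)$. The one place you are more careful than the paper is the explicit split of the parameter range at $\lambda_0=1/(Y^*)^2$: for $\lambda\in[\lambda_0,1)$ you dominate $T_\lambda$ by a single a.s.\ finite random variable (so that the $\limsup_{y\to\infty}$ form of the hypothesis never needs to be invoked at small $y$), whereas the paper applies the hypothesis bound at $\sqrt{(n+1)/\lambda}$ uniformly over all $\lambda\in(0,1)$ without flagging that this point may lie below the threshold $Y^*$ when $\lambda$ is close to $1$ -- a minor gap your two-regime argument closes cleanly.
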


\ni
{\em Proof of Proposition \ref{pr.checkT}.}

Start by noticing that \eqref{tight-object} can be written as  
\begin{equation}\label{other-form-tight}
\lambda^{\frac{3}{2}}\int_{0}^\infty e^{-\lambda x}dF(\sqrt{x}),
\end{equation}
where 
\[
F(y) := \int_0^y xe^{-\sqrt{2}x}\widehat{\theta}(dx).
\]

Now, using integration by parts, 
\begin{align*}
\lambda^{\frac{3}{2}}\int_{0}^\infty e^{-\lambda x}dF(\sqrt{x}) = \lambda^{\frac{5}{2}}\int_{0}^\infty F(\sqrt{x})e^{-\lambda x}dx 
& = \lambda^{\frac{3}{2}}\int_{0}^\infty F\bigl(\sqrt{\frac{x}{\lambda}}\bigr)e^{-x}dx \\
 &\lesssim \sum_{n=0}^{\infty} \lambda^{\frac{3}{2}}F\bigl(\sqrt{\frac{n+1}{\lambda}}\bigr)e^{-n}.
\end{align*}
Then, using \eqref{moment=tightness} and the fact that $(\sum_n a_n)^\alpha \leq \sum_{n}a_n^{\alpha}$ for $\alpha <1$, we get 
\begin{align*}
\E\biggl(\lambda^{\frac{3}{2}}\int_{0}^\infty e^{-\lambda x}dF(\sqrt{x}) \biggr)^\alpha  \lesssim \sum_{n=0}^{\infty} \lambda^{\frac{3\alpha}{2}}\E\bigl(F\biggl(\sqrt{\frac{n+1}{\lambda}}\biggr)^{\alpha}\bigr) e^{-\alpha n}  
 \lesssim \sum_{n=0}^{\infty} (n+1)^{\frac{3\alpha}{2}} e^{-\alpha n},
\end{align*}
which implies the tightness of \eqref{other-form-tight}. \qed

\subsection{Random shift of a fixed point $\theta$.}
We conclude this article with the following slightly tangential result.

As we already mentioned, the main result from \cite{CGS-BBM} says that any fixed point $\theta$ of the Markov process $\theta_t$ has the distribution of $\tilde{\mathcal{E}}_\infty (\cdot - S)$ where $\tilde{\mathcal{E}}_\infty$ is the limiting extremal process of BBM and  $S$ is some real-valued random variable which is independent of $\tilde{\mathcal{E}}_\infty$. Then, a natural question (we thank here an anonymous referee of \cite{CGS-BBM}) is whether we could obtain the random shift $S_\theta$ as some measurable function of $\theta$ so that $\theta(\cdot+S_\theta)$ has the same law as $\tilde{\mathcal{E}}_\infty$. The following proposition answers affirmatively to this question. 

\begin{proposition}\label{pr.meas}
If $\theta$ is a fixed point of the Markov process $\theta_t$, then there exists a measurable real-valued random variable $S=S(\theta)$ such that $\theta(\cdot+S_\theta)$ has the same law as $\tilde{\mathcal{E}}_\infty$.
\end{proposition}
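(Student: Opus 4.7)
The plan is to extract the shift $S_\theta$ as a Borel functional of $\theta$ using the tail statistic appearing in Theorem~\ref{main-thm}. By the main result of \cite{CGS-BBM}, we may construct, on an enlarged probability space, a triple $(\tilde{\mathcal{E}}_\infty, S_0, \theta')$ with $S_0$ independent of $\tilde{\mathcal{E}}_\infty$ and $\theta' := \tilde{\mathcal{E}}_\infty(\cdot - S_0) \overset{\calL_2}{=} \theta$. It suffices to produce a Borel map $S : \M_2 \to \R$ such that $S(\theta') = S_0$ almost surely in the coupling; setting $S_\theta := S(\theta)$ and invoking the joint distributional identity $(\theta, S(\theta)) \overset{\calL}{=} (\theta', S(\theta'))$ (which follows from the Borel measurability of $S$) will then deliver $\theta(\cdot + S_\theta) \overset{\calL_2}{=} \tilde{\mathcal{E}}_\infty$.

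For each $t>0$ consider the Borel function
\[
h_t(\eta) := \frac{1}{t^{3/2}}\int_{-\infty}^0 (-x)\,e^{\sqrt{2}x}\,e^{-x^2/(2t)}\,\eta(dx), \qquad \eta\in\M_2.
\]
Since $\tilde{\mathcal{E}}_\infty$ is a fixed point (hence trivially in its own domain of attraction), part~(1) of Theorem~\ref{main-thm} applied to $\tilde{\mathcal{E}}_\infty$ gives $h_t(\tilde{\mathcal{E}}_\infty)\overset{\P}{\longrightarrow} 1$ as $t\to\infty$. The key step is then the strengthening, under the coupling, of Remark~\ref{any-other-inv} from convergence in law to convergence in probability:
\[
h_t(\theta') \overset{\P}{\longrightarrow} e^{\sqrt{2}S_0}\qquad\text{as }t\to\infty. \qquad(\ast)
\]
To establish $(\ast)$ I would condition on $S_0 = s$ and change variables $y = x-s$ inside $h_t(\theta')$. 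This produces a main piece $e^{\sqrt{2}s}\, h_t(\tilde{\mathcal{E}}_\infty)$, together with three error contributions: (i) an a.s.\ bounded boundary term supported on $[-|s|, 0]$, hence $o(t^{3/2})$; (ii) the cross term $\tfrac{s\,e^{\sqrt{2}s}}{t^{3/2}}\int e^{\sqrt{2}y}e^{-(y+s)^2/(2t)}\,\tilde{\mathcal{E}}_\infty(dy)$, which is $O_\P(1/\sqrt{t})$ because the integral itself is $O_\P(t)$; and (iii) a Gaussian perturbation controlled by the pointwise inequality $|e^{-(y+s)^2/(2t)} - e^{-y^2/(2t)}| \lesssim e^{-y^2/(2t)}\,|s|(|y|+|s|)/t$ on the window $|y| \leq t^{1/2}\log t$, the tail $|y| > t^{1/2}\log t$ being absorbed using the a.s.\ bound on $\widehat{\tilde{\mathcal{E}}_\infty}([0,v])$ implicit in \eqref{roughtailEt}. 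Integrating against the law of $S_0$ then removes the conditioning and yields $(\ast)$.

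Since convergence in probability implies almost sure convergence along a subsequence, I would choose (depending on the law $\mu_\theta$) a deterministic sequence $t_k\to\infty$ such that $h_{t_k}(\theta')\to e^{\sqrt{2}S_0}$ almost surely, and define the Borel function
\[
S(\eta) := \tfrac{1}{\sqrt{2}}\log\!\Bigl(\liminf_{k\to\infty} h_{t_k}(\eta)\Bigr)
\]
on $\{\eta \in \M_2 \colon \liminf_k h_{t_k}(\eta) \in (0,\infty)\}$, and $S(\eta) := 0$ elsewhere. Each $\eta\mapsto h_{t_k}(\eta)$ is Borel on $\M_2$, so $S$ is Borel; by construction $S(\theta') = S_0$ almost surely in the coupling, so $\theta'(\cdot + S(\theta')) = \tilde{\mathcal{E}}_\infty$ almost surely, and the distributional conclusion for $\theta$ follows as explained in the first paragraph. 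The principal obstacle is the in-probability transfer~$(\ast)$: Theorem~\ref{main-thm} and Remark~\ref{any-other-inv} give only the distributional limit $h_t(\theta)\overset{\calL}{\to} e^{\sqrt{2}S_0}$, and upgrading this to convergence in probability on the coupled space requires the explicit calculation sketched above.
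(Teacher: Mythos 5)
Your plan is a genuinely different route from the paper's. The paper works with the monotone tail statistic $Y_n^{S} = \tilde{\mathcal{E}}_\infty^S([-n,\infty))/(n e^{\sqrt{2}n})$, whose convergence in probability to $\tfrac{1}{\sqrt\pi}e^{\sqrt{2}S}$ is imported directly from~\cite{lisa-paper} (the deterministic shift $x\mapsto x+s$ acts transparently on this ratio), and then deals with the subsequence problem by a sandwich argument with i.i.d.\ Gaussian shifts $G_i$, exploiting that $S\mapsto Y^S_n$ is nondecreasing. You instead work with the statistic $h_t = R_t$ from~\eqref{result-1}, couple $\theta$ to $\tilde{\mathcal{E}}_\infty(\cdot - S_0)$, and try to upgrade Remark~\ref{any-other-inv} from $\overset{\calL}{\to}$ to $\overset{\P}{\to}$ on the coupled space by a change-of-variables expansion. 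The overall architecture (establish $(\ast)$, extract an a.s.\ subsequence $t_k$, define $S(\eta) = \tfrac{1}{\sqrt 2}\log(\liminf_k h_{t_k}(\eta))$, then transfer by equality in law) is sound, and the fact that your subsequence $t_k$ may depend on $\mu_\theta$ is not an obstruction for the statement as written. But your $h_t$ has no monotonicity in the shift, so all of the work that the paper's sandwich argument avoids gets pushed into your error estimates for $(\ast)$.

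The gap is in your justification of step (ii). You assert that $\int_{-\infty}^0 e^{\sqrt{2}y}e^{-(y+s)^2/(2t)}\tilde{\mathcal{E}}_\infty(dy) = O_\P(t)$. That tightness at scale $t$ is not established anywhere in the paper; it is precisely the content of Question~\ref{op2}, which the authors explicitly flag as open even for point processes satisfying~\eqref{result-1}. The pointwise a.s.\ input~\eqref{roughtailEt} only delivers an $O(t^2)$ bound, two orders of magnitude off. What is actually needed (and true) is weaker: you only need this integral to be $o_\P(t^{3/2})$, and this \emph{does} follow from~\eqref{last-term} in Section~\ref{proof-of-main-ingredient} (which gives $\tfrac{\log s}{s^{3/2}}\int e^{-\sqrt{2}x}e^{-x^2/4s}\widehat\theta(dx)\overset{\P}\to 0$ whenever $R_s$ is tight, hence for $\theta=\tilde{\mathcal{E}}_\infty$; a constant change of Gaussian scale converts this to your normalization). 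So the conclusion of step (ii) is salvageable, but the stated reason is wrong and should be replaced by an appeal to~\eqref{last-term}. Steps (i) and (iii) are fine modulo care on the Gaussian tail (use $(y+s)^2\ge y^2/2$ for $|y|\ge 4|s|$ and then~\eqref{roughtailEt}), and (iii) is handled exactly like the $I_1$ term in Section~\ref{proof-of-main-ingredient} via $|y|/\sqrt{t}\lesssim e^{y^2/(8t)}$ together with the tightness of $R_t$. Once repaired along these lines, your route is valid; it trades the paper's short, self-contained sandwich argument for a heavier rerun of the Section~\ref{proof-of-main-ingredient} error analysis, so I would say the paper's construction is preferable, but yours does work.
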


\ni
{\em Proof.} We are going to use the following result which we already mentioned and which is proved in \cite{lisa-paper}. Write $\tilde{\mathcal{E}}_\infty^S$ for $\tilde{\mathcal{E}}_\infty (\cdot - S)$. Then, as $x\to\infty$,
\begin{equation}
    Y_x^S:=\frac{\tilde{\mathcal{E}}_\infty^S([-x,+\infty))}{xe^{\sqrt{2}x}}  \overset{\P}{\longrightarrow} \frac{1}{\sqrt{\pi}}e^{\sqrt{2}S}.
\end{equation}
Consider the sequence $(Y_n^S)_{n\ge1}$, it then follows from the convergence in probability that there exists some subsequence $(n_k)$ along which $Y_n^S$ converges a.s. to the limit $\frac{1}{\sqrt{\pi}}e^{\sqrt{2}S}$ which is positive a.s.

The slight difficulty we face here is that we want to recover this random shift out of the observation of a sample $\theta$ of a fixed point without any apriori information on this shift, not even on its underlying law. In the above argument, the chosen subsequence $\{n_k\}$ may depend on the law of the shift $S$. To overcome this, we proceed as follows.


We  want to find a suitable subsequence $n_k$ so that for \underline{any} random shift $S$, 
\[
Y^S_{n_k}=\frac{\tilde{\mathcal{E}}_\infty^S[-n_k,+\infty)}{n_ke^{\sqrt{2}n_k}}=\frac{\tilde{\mathcal{E}}_\infty[-n_k-S,+\infty)}{n_ke^{\sqrt{2}n_k}}=Y_{n_k+S}^0\times e^{\sqrt{2}S}\frac{n_k+S}{n_k}
\]converges a.s. to the desired limit. Note that even if $Y_n^0$ converges a.s. along some subsequence $n_k$, we could NOT get the a.s. convergence of $Y_{n_k+S}^0$, unless we could prove that for $\tilde{\mathcal{E}}_\infty$, we have for any fixed $a\in\R$,
\[
Y_{n_k+a}^0-Y_{n_k}^0\xrightarrow{a.s.}0.
\]

To find a universal such subsequence $n_k$, let us take a standard Gaussian random variable $G$ which is independent of $\tilde{\mathcal{E}}_\infty$. Then for $Y_n^G$ associated with $G$, one easily sees (as we already argued for more general shifts) that 
\[
Y_n^G\xrightarrow{\P} \frac{1}{\sqrt{\pi}}e^{\sqrt{2}G}.
\]
We may in particular take a subsequence $n_k$ along which $Y^G_{n_k}$ converges a.s. to $\frac{1}{\sqrt{\pi}}e^{\sqrt{2}G}$. We will show that this choice of subsequence is what we need. 

Let $(G_i)_{i\ge1}$ be a sequence of i.i.d. copies of $G$ which are independent of $\tilde{\mathcal{E}}_\infty$. For any $i\ge1$, as $(Y^{G_i}_n)_{n\ge1}$ has the same law as $(Y^G_n)_{n\ge1}$, it is immediate that for any $i\geq 1$, $Y^{G_i}_{n_k}$ converges a.s. to $\frac{1}{\sqrt{\pi}}e^{\sqrt{2}G_i}$. Now we are ready to consider an arbitrary random shift $S$. It is straightforward to check for any small $\epsilon\in(0,1)$, there exists $N_\epsilon\ge 2$  (which does depend on the law of $S$) so that

\[
\P( \exists i,j \leq N_\epsilon \text{ s.t. } G_i\le S\le G_j \text{ and } |G_i-G_j|\le \epsilon)\ge 1-\epsilon.
\]

Note that on the event $\{G_i\le S\le G_j\}$,
\[
Y^{G_i}_n\le Y^S_n\le Y^{G_j}_n, \forall n\ge1.
\]
Let 
\[
Y^S:=\limsup_{n_k\to\infty}Y^S_{n_k}.
\]
Then, on $\{G_i\le S\le G_j, |G_i-G_j|\le \epsilon\}$,
\[
\frac{1}{\sqrt{\pi}}e^{\sqrt{2}(S-\epsilon)}\le \frac{1}{\sqrt{\pi}}e^{\sqrt{2}G_i}\le Y^S\le \frac{1}{\sqrt{\pi}}e^{\sqrt{2}G_j}\le \frac{1}{\sqrt{\pi}}e^{\sqrt{2}(S+\epsilon)}.
\]
As a result, for any $\epsilon>0$, one has
\[
\P(e^{\sqrt{2}S-\epsilon}\le \sqrt{\pi}Y^S\le e^{\sqrt{2}S+\epsilon})\ge 1-\epsilon.
\]
This shows that $Y^S=\frac{1}{\sqrt{\pi}}e^{\sqrt{2}S}$ a.s.

To conclude, let us 
 turn to a fixed point $\theta$ distributed as $\tilde{\mathcal{E}}_\infty^S$. Let $Y^\theta_n:=\frac{\theta([-n,+\infty))}{ne^{\sqrt{2}n}}$ and take $Y^\theta_\infty=\limsup_{n_k}Y^\theta_{n_k}$. This limit can be viewed as a measurable function of $\theta$ and it is positive a.s. 
 We see that by definition,
\[
(\theta, Y^\theta_\infty)\textrm{ and }(\tilde{\mathcal{E}}_\infty^S, \frac{1}{\sqrt{\pi}}e^{\sqrt{2}S})\textrm{ have the same law}.
\]
We thus define 
\[
S_\theta:=\frac{1}{\sqrt{2}}\log(\sqrt{\pi}Y^\theta_\infty),
\]
and claim that $(\theta, S_\theta)$ and $(\tilde{\mathcal{E}}_\infty^S,S)$ have the same law. As a result, we get that $\theta$ shifted by $-S_\theta$ has the same distribution as the limiting extremal process $\tilde{\mathcal{E}}_\infty$. \qed

\bibliographystyle{alpha}
\bibliographystyle{acm}	
\bibliography{biblio}

\end{document}